		\tikzset{cross/.style={cross out, draw=black, minimum size=2*(#1-\pgflinewidth), inner sep=0pt, outer sep=0pt},
	cross/.default={4pt}}
	\tikzstyle std=[line width=0.7pt]   
	\tikzstyle stdthin=[line width=0.3pt]   
	\tikzstyle stdthick=[line width=1.0pt]   
	\tikzstyle fwd=[line width=0.7pt, ->]   
	\tikzstyle fwdthin=[line width=0.3pt, ->]   
	\tikzstyle fwdthick=[line width=1.0pt, ->]   
	\tikzstyle fwddash=[line width=0.7pt, dashed, ->]   
	\tikzstyle bwd=[double, line width=0.3pt, ->]  
	\tikzstyle refl=[double,  dashed, line width=0.2pt, ->]    
	\tikzstyle{every node}=[font=\small] 
	\tikzset{
		>=stealth', 
		invisible/.style={opacity=0}, 
		alt/.code args={<#1>#2#3}{\alt<#1>{\pgfkeysalso{#2}}{\pgfkeysalso{#3}}}, 
		visible on/.style={alt=#1{}{invisible}}, 
		smallnode/.style={circle, fill=black, thick, inner sep=1pt, minimum size=1.5pt}, 
		punkt/.style={
				rectangle,
				rounded corners,
				draw=black, very thick,
				text width=5.5em,
				minimum height=2em,
				text centered},
		punkt_big/.style={
				rectangle,
				rounded corners,
				draw=black, very thick,
				text width=7em,
				minimum height=2em,
				text centered},
	}
\crefname{hypothesis}{Hypothesis}{Hypotheses}
\title{Convergence Analyses of Davis--Yin Splitting via Scaled Relative Graphs}
\author{Jongmin Lee$^{\dagger\ddagger}$
\and Soheun Yi$^{\S\ddagger}$
\and Ernest K.\ Ryu$^{\dagger}$
}
\newcommand{\cut}[1]{{}}
\newcommand*{\rom}[1]{\expandafter\@slowromancap\romannumeral #1@}
\newcommand{\cA}{{\mathcal{A}}}
\newcommand{\cB}{{\mathcal{B}}}
\newcommand{\cC}{{\mathcal{C}}}
\newcommand{\cG}{{\mathcal{G}}}
\newcommand{\cH}{{\mathcal{H}}}
\newcommand{\cL}{{\mathcal{L}}}
\newcommand{\cM}{{\mathcal{M}}}
\newcommand{\cN}{{\mathcal{N}}}
\newcommand{\cZ}{{\mathcal{Z}}}
\DeclareFontFamily{U}{ntxmia}{}
\DeclareFontShape{U}{ntxmia}{m}{it}{<-> ntxmia }{}
\DeclareFontShape{U}{ntxmia}{b}{it}{<-> ntxbmia }{}
\DeclareSymbolFont{lettersA}{U}{ntxmia}{m}{it}
\NewDocumentCommand{\varmathbb}{m}
 {
  \tl_map_inline:nn { #1 }
  {
    \use:c { varbb##1 }
  }
 }
\DeclareMathSymbol{varbb#1}{\mathord}{lettersA}{\int_eval:n { `#1+67 }}
\DeclareMathSymbol{varbbk}{\mathord}{lettersA}{169}
\DeclareMathOperator*{\argmax}{argmax}
\DeclarePairedDelimiter{\dotp}{\langle}{\rangle}
\renewcommand{\Re}{\operatorname{Re}} 	
\renewcommand{\Im}{\operatorname{Im}}	
\newcommand{\dom}{\mathrm{dom}\,} 
\newcommand{\reals}{\mathbb{R}}
\newcommand{\complex}{\mathbb{C}}
\newcommand{\binfty}{{\boldsymbol \infty}}
\newcommand{\ecomplex}{\overline{\mathbb{C}}}
\DeclareFontFamily{U}{tipa}{}
\DeclareFontShape{U}{tipa}{m}{n}{<->tipa10}{}
\newcommand{\arc@char}{{\usefont{U}{tipa}{m}{n}\symbol{62}}}%
\newcommand{\arc}[1]{\mathpalette\arc@arc{#1}}
\newcommand{\arc@arc}[2]{%
  \sbox0{$\m@th#1#2$}%
  \vbox{
    \hbox{\resizebox{\wd0}{\height}{\arc@char}}
    \nointerlineskip
    \box0
  }%
}
\newcommand{\rarc}{\mathrm{Arc}^+}
\newcommand{\larc}{\mathrm{Arc}^-}
\definecolor{lightgrey}{gray}{0.8}
\definecolor{medgrey}{gray}{0.6}
\definecolor{darkgrey}{gray}{0.4}
\newcommand{\opA}{{\varmathbb{A}}}
\newcommand{\opB}{{\varmathbb{B}}}
\newcommand{\opC}{{\varmathbb{C}}}
\newcommand{\opD}{{\varmathbb{D}}}
\newcommand{\opI}{{\varmathbb{I}}}
\newcommand{\opJ}{{\varmathbb{J}}}
\newcommand{\opT}{{\varmathbb{T}}}
\newcommand{\Di}{{\mathrm{Disk}}}
\newcommand{\Ci}{{\mathrm{Circ}}}
\DeclarePairedDelimiter{\norm}{\lVert}{\rVert}
\DeclarePairedDelimiter{\abs}{\lvert}{\rvert}
\DeclarePairedDelimiter{\babs}{\bigg\lvert}{\bigg\rvert}
\DeclarePairedDelimiter{\pths}{\bigg{(}}{\bigg{)}}
\newlength{\doublefracgap}
\DeclareRobustCommand{\doublefrac}[2]{%
  \mathinner{\mathpalette\doublefrac@{{#1}{#2}}}%
}
\newcommand{\doublefrac@}[2]{\doublefrac@@#1#2}
\newcommand{\doublefrac@@}[3]{%
  \ooalign{%
    \raisebox{\doublefracgap}{$\m@th#1\frac{#2}{\phantom{#3}}$}\cr
    \raisebox{-\doublefracgap}{$\m@th#1\frac{\phantom{#2}}{#3}$}\cr
  }%
}
\newcommand{\ptsize}{1}
\newcommand{\cminpt}{28.3465}
\newcommand{\para}[1]{\left({#1}\right)}
\newcommand{\R}{\mathbb{R}}
\newcommand{\resa}[1]{\opJ_{\alpha #1}}
\newcommand{\srg}[1]{\cG\left(#1\right)}
\newcommand{\lip}[1]{\cL_{#1}}
\newcommand{\coco}[1]{\cC_{#1}}
\newcommand{\sm}[1]{\cM_{#1}}
\newcommand{\conj}[1]{\overline{#1}}
\newcommand{\diffang}[4]{%
  	\begingroup 
		\def\firstop{#1}%
		\def\secondop{#2}%
		\def\identity{\opI}%
		\ifx\firstop\identity
		\def\firstop{}%
		\fi
		\ifx\secondop\identity
		\def\secondop{}%
		\fi
		\angle (\firstop #3 - \firstop #4, \secondop #3 - \secondop #4)
  	\endgroup
}
\newcommand{\diffnormfrac}[4]{%
  	\begingroup 
		\def\firstop{#1}%
		\def\secondop{#2}%
		\def\identity{\opI}%
		\ifx\firstop\identity
		\def\firstop{}%
		\fi
		\ifx\secondop\identity
		\def\secondop{}%
		\fi
		\frac{\norm{\firstop #3 - \firstop #4}}{\norm{\secondop #3 - \secondop #4}}
  	\endgroup
}
\newcommand{\srgval}[4]{\diffnormfrac{#1}{#2}{#3}{#4}\exp[i\diffang{#1}{#2}{#3}{#4}]}
\newtheorem{fact}{Fact}
\newcommand{\customfootnote}[2]{%
  \xdef\@thefnmark{}\@footnotetext{$\!\!\!$\textsuperscript{#1}#2}
}
\begin{document}

\maketitle

\customfootnote{$\dagger$}{Department of Mathematical Sciences, Seoul National University.}
\customfootnote{$\ddagger$}{Equal contribution.}
\customfootnote{$\S$}{Department of Statistics and Data Science, Carnegie Mellon University.}

\begin{abstract}
Davis--Yin splitting (DYS) has found a wide range of applications in optimization, but its linear rates of convergence have not been studied extensively. The scaled relative graph (SRG) simplifies the convergence analysis of operator splitting methods by mapping the action of the operator to the complex plane, but the prior SRG theory did not fully apply to the DYS operator. In this work, we formalize an SRG theory for the DYS operator and use it to obtain tighter contraction factors.
\end{abstract}

\begin{keywords}
Convex optimization, Splitting methods, Monotone operators, Euclidean geometry
\end{keywords}

\begin{MSCcodes}
68Q25, 68R10, 68U05
\end{MSCcodes}

\section{Introduction}
The Davis--Yin splitting (DYS) operator \cite{davis_three-operator_2017} encodes solutions of the three-operator monotone inclusion problem
	\begin{align*}
		\underset{x \in \cH}{\text{find}} \quad 0 \in (\opA + \opB + \opC)x,
	\end{align*}
	where $\cH$ is a real Hilbert space, $\opA$ and $\opB$ are monotone, and $\opC$ is cocoercive,
	as fixed-points of the operator 
	\[
		\opT = \opI - \resa{\opB} + \resa{\opA}(2\resa{\opB} - \opI - \alpha \opC \resa{\opB}),
	\]
	where $\alpha>0$ and  $\resa{\opA}$ and $\resa{\opB}$ are the resolvents of $\opA$ and $\opB$.
	DYS generalizes the forward-backward splitting (FBS) and Douglas--Rachford splitting (DRS) operators, and it has found a wide range of applications \cite{yan2018new, wang2017three, carrillo2022primal, van2020three, weylandt2019splitting, heaton2021learn} in optimization. However, linear convergence rates of DYS iterations have not been studied extensively.
	
	
On the other hand, the recently proposed scaled relative graph (SRG) \cite{ryu_scaled_2021} allows one to analyze the convergence of operator splitting methods by mapping the action of a multi-valued nonlinear operator to the extended complex plane, analogous to how the spectrum maps the action of a linear operator to the complex plane. Prior work used the SRG to analyze rates of convergence for FBS and DRS \cite{ryu_scaled_2021}, but the prior SRG theory did not fully apply to the DYS operator.

In this paper, we analyze convergence rates of DYS iterations using the SRG. First, we extend the SRG theory to accommodate the DYS operator. We then establish linear rates of convergence of DYS by bounding the maximum modulus of the SRG on the complex plane. We furthermore provide a new result on averagedness coefficients of DYS.	



\subsection{Prior Works}
	
	
	Splitting methods for monotone inclusion problems is a powerful framework encompassing a wide range of convex optimization algorithms \cite{bauschke_2017_convex, ryu_primer_2016, ryu2021large}. Classical splitting methods such as forward-backward splitting (FBS) \cite{bruck1977weak, passty1979ergodic, beck2009fast}, Douglas--Rachford splitting (DRS) \cite{peaceman1955numerical, douglas1956numerical, lions1979splitting}, and alternating directions method of multipliers (ADMM) \cite{gabay1976dual, Glowinski1975}, a derived splitting algorithm, have had an enormous range of applications. As a splitting scheme unifying FBS and DRS, Generalized forward-backward splitting (GFBS) \cite{raguet2013generalized} was first proposed, and Forward-Douglas--Rachford splitting (FDRS) \cite{briceno2015forward} was then presented as an extension of GFBS. In 2017, Davis and Yin proposed what we call the Davis-Yin splitting (DYS) as a complete unification of FBS and DRS \cite{davis_three-operator_2017}.
    This breakthrough has found a wide range of new applications \cite{yan2018new, wang2017three, carrillo2022primal, van2020three, weylandt2019splitting, heaton2021learn} and many variants of DYS such as stochastic DYS \cite{yurtsever_three_2021, yurtsever2016stochastic, cevher2018stochastic, yurtsever_three_2021-1, pedregosa2019proximal}, inexact DYS \cite{zong2018convergence}, adaptive DYS  \cite{pedregosa_adaptive_2018}, inertial DYS \cite{Cui2019AnIT}, and primal-dual DYS\cite{salim2022dualize} have been proposed. 
	
	However, the study of linear convergence rates of DYS has been limited. Bredies, Chenchene, Lorenz, and Naldi studied the convergence of DYS under a wider range of stepsizes \cite{bredies2022degenerate} and Arag{\'o}n-Artacho and Torregrosa-Bel{\'e}n also proved the convergence of the DYS iteration over a wider range of stepsizes \cite{ Aragon-ArtachoTorregrosa-Belen2022_direct}. Dao and Phan analyzed a modified version of DYS iterations \cite{DaoPhan2021_adaptive}, and Pedregosa analyzed sublinear convergence rates of DYS iterations  \cite{pedregosa_convergence_2016}, but they did not analyze linear rates of convergence. Condat and Richt{\'a}rik analyzed linear convergence rates of a randomized version of Primal-Dual Davis--Yin (PDDY) \cite{condat2022randprox}. Ryu, Taylor, Bergeling, and Giselsson \cite{ryu_operator_2020} and Wang, Fazlyab, Chen, and Preciado \cite{wang_robust_2019} formulated the problem of computing tight contraction factors as SDPs using the performance estimation problem (PEP) and integral quadratic constraint (IQC) approaches, but these are numerical results, not analytic convergence proofs in the classical sense. 
    To the best of our knowledge, the only prior works providing analytic linear convergence rates of DYS iterations are the arXiv version of the original DYS paper \cite{davis_three-operator_2017}  and \cite{condat2022randprox}. The latter work \cite{condat2022randprox} considers randomized updates, which makes the setup distinct from ours.


	On the other hand, the theory of scaled relative graph (SRG) by Ryu, Hannah, and Yin \cite{ryu_scaled_2021} allows one to analyze the convergence of operator splitting methods by mapping the action of a multi-valued nonlinear operator to the extended complex plane, analogous to how the spectrum maps the action of a linear operator to the complex plane. Follow-up work has extended the theory and applied it to analyze nonlinear operators: Huang, Ryu, and Yin characterized the SRG of normal matrices \cite{huang_scaled_2019}, Pates further characterized the SRG of linear operators using the Toeplitz--Hausdorff theorem \cite{pates_scaled_2021}, and Huang, Ryu, and Yin \cite{huang_tight_2020} established the tightness of the averagedness coefficient of the composition of averaged operators \cite{ogura_non-strictly_2002} and the DYS operator \cite{davis_three-operator_2017}. SRG has also found applications in control theory: Chaffey, Forni, and Rodolphe utilized the SRG to analyze input-output properties of feedback systems \cite{chaffey_graphical_2021,chaffey_scaled_2021}, and Chaffey and Sepulchre furthermore used it as an experimental tool to determine properties of a given model\cite{chaffey_monotone_2021,chaffey_rolled-off_2021, chaffey2022circuit}.
	

	\subsection{Contribution and organization}

Section~\ref{s:srg_theory} presents our first major contribution, formalizing an SRG theory for the DYS operator. This result reduces the problem of analyzing the DYS operator into bounding the maximum modulus of a subset of the complex plane. Section~\ref{s:dys_cont} presents our second major contribution, analyzing contraction factors of the DYS operator. These factors are tighter than those shown in \cite{davis_three-operator_2017} and are, to the best of our knowledge, the best known factors. Finally, Section~\ref{s:dys_avg} presents a minor contribution, a tight averagedness coefficient of the DYS operator under the assumption of strong monotonicity of $\opA$ or $\opB$ and Lipschitzness of $\opC$. This is, to the best of our knowledge, the first averagedness result of
DYS without cocoercivity of $\opC$.

\subsection{Preliminaries}

	Write $\cH$ for a real Hilbert space with inner product $\dotp{\cdot, \cdot}$ and norm $\norm{\cdot}$. 
Write $\ecomplex=\complex\cup\{\infty\}$ to denote the extended complex plane, and define $1/0=\infty$ and $1/\infty=0$ in $\ecomplex$.
	We use Minkowski-type set notations: for $\alpha \in \complex$ and $W, Z \subseteq \complex$, write
	\begin{gather*}
		W \alpha = \alpha W = \{\alpha w \,|\, w \in W\}, \quad W + \alpha = \alpha + W = \{\alpha + w \,|\, w \in W\},
	\end{gather*}
	and 
	\[
		W+Z = \{w+z\,|\,w \in W, z \in Z\}, \quad WZ = \left\{ wz \,|\, w \in W,\, z \in Z \right\}.
	\]
	For $W \subset \complex$, write
	\[
		\abs{W} = \left\{ \abs{w} \,|\, w \in W \right\}, 
	\]
	where $\abs{w}$ is absolute value (modulus) of a complex number $w$.
    To clarify,  $\abs{W}$ does not refer to the cardinality of $W$.
	For $W \subset \ecomplex$, write
	\[
			\quad 
		W^{-1} = \left\{ w^{-1} \,|\, w \in W \right\}.
\]
    For $z \in \complex$, write $\Re z$ and $\Im z$ for the real and imaginary part of $z$, respectively, and $\arg z$ for the argument of $z$ that lies within $(-\pi, \pi]$.

	\paragraph{\textbf{Geometry}}
	Given any $x, y\in\cH$, define 
	\begin{align*}
		\angle(x,y) =
		\left\{
		\begin{array}{ll}
		\arccos\para{\tfrac{\dotp{x,y}}{\norm{x}\norm{y}}}&\text{ if }x\ne 0,\,y\ne 0\\
		0&\text{ otherwise}
		\end{array}
		\right.
	\end{align*}
	to be the angle between $x$ and $y$.
	Note that $\angle(x, y)\in [0,\pi]$.
	The following spherical triangle inequality is the analog of the triangle inequality on spheres.
	\begin{fact}[Fact 17 \cite{ryu_scaled_2021}]
		\label{fact:sphere-tri-ineq}
		Let $\cH$ be a real Hilbert space.
		Then, 
		\[
			\left| \angle(x, y) - \angle(y, u) \right|
			\le \angle(x, u) 
			\le \angle(x, y) + \angle(y, u),\qquad\forall \,x,y,u\in \cH.
		\]
	\end{fact}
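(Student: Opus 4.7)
The plan is to compute $\cos\angle(x,u)$ via an orthogonal decomposition relative to $y$ and then invoke Cauchy--Schwarz, recovering the familiar spherical cosine inequalities. Since the three angles depend only on the normalized directions, one first reduces to the case $\|x\|=\|y\|=\|u\|=1$, setting aside momentarily the degenerate possibilities that some vector is zero or that $x$ (resp.\ $u$) is collinear with $y$. Writing $\alpha = \angle(x,y)$, $\beta = \angle(y,u)$, $\gamma = \angle(x,u)$ and applying Gram--Schmidt inside the three-dimensional subspace $\Span\{x,y,u\}$, we obtain
\[
x = \cos(\alpha)\,y + \sin(\alpha)\,p, \qquad u = \cos(\beta)\,y + \sin(\beta)\,q,
\]
for some unit vectors $p, q \in \cH$ with $p,q \perp y$. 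Note $\sin\alpha,\sin\beta \ge 0$ since $\alpha,\beta \in [0,\pi]$.

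The main step is then essentially one line. Taking the inner product gives
\[
\cos\gamma = \dotp{x,u} = \cos(\alpha)\cos(\beta) + \sin(\alpha)\sin(\beta)\,\dotp{p,q},
\]
and since $\dotp{p,q} \in [-1,1]$ by Cauchy--Schwarz, the sum/difference identities yield
\[
\cos(\alpha+\beta) \;\le\; \cos\gamma \;\le\; \cos|\alpha-\beta|.
\]
Because $\cos$ is strictly decreasing on $[0,\pi]$ and $\gamma,|\alpha-\beta|\in[0,\pi]$, the upper bound gives $\gamma \ge |\alpha-\beta|$. For the opposite direction: if $\alpha+\beta \le \pi$, monotonicity again gives $\gamma \le \alpha+\beta$; if $\alpha+\beta > \pi$, the inequality $\gamma \le \pi \le \alpha+\beta$ is automatic.

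What remains is a brief case check for the degeneracies set aside above. If $x$ (or $u$) is parallel to $y$, the decomposition collapses and the three angles satisfy either $\gamma=\beta$ or $\gamma=\pi-\beta$ (resp.\ the analogue with the roles swapped), so both inequalities follow directly. The cases where $x$, $y$, or $u$ vanishes are dispatched by the convention that assigns the angle $0$. The main anticipated obstacle is bookkeeping these corner cases rather than the principal inequality itself; the substantive content is captured entirely by the identity $\cos\gamma = \cos\alpha\cos\beta+\sin\alpha\sin\beta\,\dotp{p,q}$ together with $|\dotp{p,q}|\le 1$.
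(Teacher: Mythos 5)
The paper does not prove this statement; it imports it verbatim as Fact~17 of \cite{ryu_scaled_2021}, so there is no in-paper argument to compare against. Your argument for the substantive case is the standard one and is correct: normalizing, writing $x=\cos(\alpha)y+\sin(\alpha)p$ and $u=\cos(\beta)y+\sin(\beta)q$ with $p,q\perp y$ unit vectors, and using $\abs{\dotp{p,q}}\le 1$ together with $\sin\alpha,\sin\beta\ge 0$ gives $\cos(\alpha+\beta)\le\cos\gamma\le\cos\abs{\alpha-\beta}$, and your monotonicity argument (including the split on $\alpha+\beta\gtrless\pi$) correctly converts this into the two angle inequalities. The collinear cases $x=\pm y$ are also handled correctly.

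The one genuine flaw is your last claim, that the cases where $x$, $y$, or $u$ vanishes ``are dispatched by the convention that assigns the angle $0$.'' They are not: with that convention the stated inequality is actually \emph{false} in those cases. For instance, if $y=0$ and $x,u$ are orthonormal, then $\angle(x,y)+\angle(y,u)=0$ while $\angle(x,u)=\pi/2$, violating the upper bound; if $x=0$ and $y,u$ are orthonormal, then $\abs{\angle(x,y)-\angle(y,u)}=\pi/2$ while $\angle(x,u)=0$, violating the lower bound. So the fact must be read as implicitly assuming the relevant vectors are nonzero (which is how it is invoked in the proof of the SRG product theorem, where the middle vector $\opB x-\opB y$ is nonzero by hypothesis). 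You should either add that hypothesis or explicitly flag that the zero cases are excluded, rather than asserting they go through.
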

	We define notations for disks and circles on the complex plane.
	For $z \in \complex$ and $r \in (0, \infty)$, define
	\[
	    \Ci(z, r) = \{ w \in \complex \,|\, \abs{w - z} = r\},
	    \qquad
	    \Di(z, r) = \{ w \in \complex \,|\, \abs{w - z} \le r \}.
	\]
    
	\paragraph{\textbf{Multi-valued operators}} In general, we follow standard convex operator theoretical notations as in \cite{bauschke_2017_convex,ryu2021large}.
    Write $\opA \colon \cH \rightrightarrows \cH$ to denote that $\opA$ is a multi-valued operator.
    Define $\dom\opA = \left\{ x \in \cH \,|\, \opA x \neq \emptyset \right\}$.
    If all outputs of $\opA$ are singletons or the empty set, we say $\opA$ is single-valued.
    In this case, we identify $\opA$ with the function from $\dom\opA$ to $\cH$.
	Define the graph of an operator $\opA$ as
	\[
		\mathrm{graph}(\opA) = \{(x, u) \in \cH \times \cH \,|\, u \in \opA x\}.
	\]
	For notational simplicity, we often do not distinguish an operator from its graph and, for example, write $(x,u) \in \opA$ to mean $u \in \opA x$.
	Define the inverse of $\opA$ as
	\[
		\opA ^{-1} = \{(u, x) \,|\, (x,u) \in \opA\},
	\]
	scalar multiplication with an operator as
	\[
		\alpha\opA  = \{(x,\alpha u) \,|\, (x, u) \in \opA\},
	\]
	the identity operator as
	\[
	    \opI = \{ (x, x) \,|\, x \in \cH \},
	\]
        and 
        \[
            \opA + \alpha\opI = \alpha\opI + \opA = \{ (x, \alpha x + u) \,|\, (x, u) \in \opA \}
        \]
        for any $\alpha\in\reals$.
	Define the resolvent of $\opA$ with stepsize $\alpha>0$ as 
	\[
	    \opJ_{\alpha \opA} = (\opI +\alpha \opA)^{-1}.
	\]
 Define addition and composition of operators $\opA \colon \cH \rightrightarrows \cH$ and $\opB \colon \cH \rightrightarrows \cH$ as
	\begin{align*}
		\opA + \opB &= \left\{ (x, u + v) \,|\, (x, u) \in \opA, (x, v) \in \opB \right\},\\
		\opA \opB &= \left\{ (x, s) \,|\, \exists \, u \; \text{such that} \; (x, u) \in \opB, (u, s) \in \opA \right\}.
	\end{align*}

	We call $\cA$ a class of operators if it is a set of operators. 
	Operators in a same class need not be defined on the same Hilbert space.
	For an operator $\opA \colon \cH \rightrightarrows \cH$ and an operator class $\cB$,
	we define sum and product of them as 
	\begin{align*}
	    \opA + \cB &= \{ \opA + \opB \,|\, \opB \in \cB, \opB \colon \cH \rightrightarrows \cH \}, \\
	    \opA\cB &= \{ \opA\opB \,|\, \opB \in \cB, \opB \colon \cH \rightrightarrows \cH \}
	\end{align*}
	Note that we restrict operands to be defined on the same domain Hilbert space $\cH$.
	For an operator class $\cA$ and $\opB \colon \cH \rightrightarrows \cH$, define $\cA + \opB$ and $\cA\opB$ in the same way.
	We define sum of classes of operators $\cA$ and $\cB$ as
	\begin{align*}
	    \cA + \cB = \{\opA + \opB \,|\, &\opA \in \cA, \,\opB \in \cB, \opA \colon \cH \rightrightarrows \cH,\, \opB \colon \cH \rightrightarrows \cH,\\
            & \cH\text{ is a Hilbert space}\}.
	\end{align*}
	We define $\cA\cB$ similarly.
        For any $\alpha\in \reals$, define
        \[
            \alpha \cA = \{ \alpha \opA \,|\, \opA \in \cA\}
        \]
        and \[
	   \cA + \alpha\opI = \alpha\opI + \cA = \{ \alpha\opI + \opA \,|\, \opA \in \cA\}.
	\]
	Define 
	\[
		\cA^{-1} = \{\opA^{-1} \,|\, \opA \in \cA \}
	\]
	and $\opJ_{\alpha\cA} = (\opI + \alpha\cA)^{-1}$ for $\alpha > 0$.
	
	We now define some special operator classes.
	Define the class of monotone operators as
	\[
		\cM = \{ \opA \colon \cH \rightrightarrows \cH \,|\, \dotp{x-y, u-v} \geq 0, \forall\, u \in \opA x, v \in \opA y ,\,\cH\text{ is a Hilbert space}\}.
	\]
	For $\mu \in (0, \infty)$, define the class of $\mu$-strongly monotone operators as
        \begin{align*}
        \sm\mu = \{\opA \colon \cH \rightrightarrows \cH \,|\, &\dotp{x-y, u-v} \geq \mu \norm{x-y}^2, \,\forall\, u \in \opA x, v \in \opA y,\\
        &\cH\text{ is a Hilbert space}\}.    
        \end{align*}
	For $L \in (0, \infty)$, define the class of $L$-Lipschitz operators as
        \begin{align*}
            \cL_L = \{ \opA \colon \dom \opA \to \cH \,|\, &\norm{ u-v}^2 \leq L^2\norm{x-y}^2, \forall\, u \in \opA x, v \in \opA y, \\
            &\dom \opA \subseteq\cH,\,\cH\text{ is a Hilbert space}\}.
        \end{align*}
	For $\beta \in (0, \infty)$, define the class of $\beta$-cocoercive operators as
        \begin{align*}
            \cC_\beta = \{ \opA \colon \dom \opA \to \cH \,|\, &\dotp{x-y, u-v} \geq \beta \norm{u-v}^2, \forall\, u \in \opA x, v \in \opA y, \\
            &\dom \opA \subseteq\cH,\,\cH\text{ is a Hilbert space}\}.
        \end{align*}
	For $\theta \in (0, 1)$, define the class of $\theta$-averaged operators as 
	\[
		\cN_\theta 	= (1-\theta)\opI + \theta \cL_1.
	\]
 
	Note that we automatically identify Lipschitz and cocoercive operators, as well as resolvents of monotone operators, as singled-value functions since they are necessarily single-valued.
	For notational convenience, we write
	\[
		\sm{0} = \coco{0} = \cM, 
		\quad 
		\lip{\infty} = \left\{ \opA \colon \cH \rightrightarrows \cH  \,|\, \cH\text{ is a Hilbert space}\right\}.
	\]
	To clarify,
	\[
		\sm{0} \ne \bigcup_{\mu > 0} \sm{\mu},
		\quad
		\coco{0} \ne \bigcup_{\beta > 0} \coco{\beta},
		\quad
		\lip{\infty} \ne \bigcup_{L < \infty} \lip{L}.
	\]

	\paragraph{\textbf{Scaled relative graphs}} 
	Define the SRG of an operator $\opA\colon\cH\rightrightarrows\cH$ as
	\begin{align*}
		\mathcal{G}(\opA)&=
		\left\{
		\frac{\|u-v\|}{\|x-y\|}
		\exp\left[\pm i \angle (u-v,x-y)\right]
		\,\Big|\,
		u\in \opA x,\,v\in \opA y,\, x\ne y \right\}\\
		&\qquad\qquad\qquad\qquad\qquad\qquad\qquad\qquad\qquad
		\bigg(\cup \{\infty\}\text{ if $\opA$ is not single-valued}\bigg).
	\end{align*}

SRG depicts a change in output relative to the change in input of operators using complex numbers, analogous to how eigenvalues capture essential information about the action of linear operators.
	We define the SRG of a class of operators $\cA$ as
	\[
	\mathcal{G}(\cA)=\bigcup_{\opA\in \cA}\mathcal{G}(\opA).
	\]

	
	
	\begin{fact}[Proposition 1 \cite{ryu_scaled_2021}]
	\label{fact:monotone-srg}
	Let $\mu$,$\beta$,$L \in (0,\infty)$ and
         $\theta \in (0, 1)$.
	The SRGs of $\cL_L$, $\cN_\theta$, $\cM$, $\sm\mu$, and $\cC_\beta$ are, respectively, given by
	\begin{center}
	\begin{tabular}{cc}
	\raisebox{-.5\height}{
	\def\scale{1.4}
	\begin{tikzpicture}[scale=\scale]
	\draw (-1.1,0.5) node {$\cG(\cL_L)$};
	\fill[fill=medgrey] (0,0) circle (0.75);
	\draw [<->] (-1,0) -- (1,0);
	\draw [<->] (0,-1) -- (0,1);
	\draw (0.75,0) node [above right] {$L$};
	\draw (-0.75,0) node [above left] {$-L$};
	\filldraw (.75,0) circle ({\ptsize / \scale pt});
	\filldraw (-.75,0) circle ({\ptsize / \scale pt});
	\draw (0,-1.2) node  {$\Di(0, L)$};
	\end{tikzpicture}}
	&
	\raisebox{-.5\height}{
	\def\scale{1.4}
	\begin{tikzpicture}[scale=\scale]
	\draw (-0.85,0.5) node {$\cG(\cN_\theta)$};
	\fill[fill=medgrey] (0.25,0) circle (0.75);
	\draw [<->] (-.8,0) -- (1.2,0);
	\draw [<->] (0,-1) -- (0,1);
	\draw (1,-0) node [above right] {$1$};
	\draw (-0.5,-0) node [below] {$1-2\theta$};
	\filldraw (1,0) circle ({\ptsize / \scale pt});
	\filldraw (.25,0) circle ({\ptsize / \scale pt});
	\filldraw (-.5,0) circle ({\ptsize / \scale pt});
	\draw (0.625,0.1) node [above] {$\theta$};
	\draw [decorate,decoration={brace,amplitude=4.5pt}] (0.25,0) -- (1,0) ;
	\draw (0.4,-1.2) node  {$\Di(1 - \theta, \theta)$};
	\end{tikzpicture}}
	\end{tabular}
	\end{center}
	\begin{center}
	\begin{tabular}{ccc}
	\raisebox{-.5\height}{
	\def\scale{1.4}
	\begin{tikzpicture}[scale=\scale]
	\draw (-0.5,0.5) node {$\cG(\cM)$};
	\fill[fill=medgrey] (0,-1) rectangle (1,1);
	\draw [<->] (-0.2,0) -- (1,0);
	\draw [<->] (0,-1) -- (0,1);
	\draw (0.7,.85) node {$\cup \{\binfty\}$};
	\draw (0.4,-1.2) node {$\left\{z\in \complex\,|\,\Re z\ge 0\right\}\cup\{\infty\}$};
	\end{tikzpicture}}
	&
	\!\!\!\!\!\!
	\raisebox{-.5\height}{
	\def\scale{1.4}
	\begin{tikzpicture}[scale=\scale]
	\draw [<->] (0,-1) -- (0,1);
	
	\draw (-0.2,0.5) node [fill=white] {$\cG(\sm\mu)$};
	\fill[fill=medgrey] (0.3,-1) rectangle (1,1);
	
	\draw [<->] (-0.2,0) -- (1,0);
	
	\draw (0.7,.85) node {$\cup \{\binfty\}$};
	\filldraw (.3,0) circle ({\ptsize / \scale pt});
	\draw (0.3,0) node [above left] {$\mu$};
	\draw (0.4,-1.2) node {$\left\{z\in \complex\,|\,\Re z\ge \mu\right\}\cup\{\infty\}$};
	
	\end{tikzpicture}}
	&
	\!\!\!\!\!\!
	\raisebox{-.5\height}{
	\def\scale{1.4}
	\begin{tikzpicture}[scale=\scale]
	\draw (-0.4,0.3) node {$\cG(\cC_\beta)$};
	\fill[fill=medgrey] (0.6,0) circle (0.6);
	\draw [<->] (-0.5,0) -- (1.5,0);
	\draw [<->] (0,-.8) -- (0,.8);
	\filldraw (1.2,0) circle ({\ptsize / \scale pt});
	\draw (1.2,0) node [above right] {\!$1/\beta$};
	\draw (0.4,-1) node  {$\Di\left(\frac{1}{2\beta}, \frac{1}{2\beta}\right)$};
	\end{tikzpicture}}
	\end{tabular}
	\end{center}
	\end{fact}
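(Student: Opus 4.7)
The plan is to prove the five SRG characterizations by separately verifying both set inclusions for each class, using a unified strategy of ``upper bound from the defining inequality'' and ``lower bound by planar rotation-scalings.''

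For the inclusion $\cG(\cdot)\subseteq(\text{stated set})$, I would take any $z\in\cG(\opA)$ with $\opA$ in the class, write $z = re^{\pm i\alpha}$ with $r = \|u-v\|/\|x-y\|$ and $\alpha = \angle(u-v, x-y)\in[0,\pi]$, and rewrite the defining inequality in terms of $(r,\alpha)$ or equivalently $(|z|, \Re z)$. For $\cL_L$, $\|u-v\|\le L\|x-y\|$ gives $|z|\le L$. For $\cM$ and $\sm\mu$, the identity $\dotp{x-y, u-v} = \|x-y\|\|u-v\|\cos\alpha$ turns ($\mu$-strong) monotonicity into $\Re z = r\cos\alpha\ge\mu$, with monotonicity being the $\mu=0$ case. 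For $\cC_\beta$, the same identity converts $\beta$-cocoercivity into $\cos\alpha\ge\beta r$ (trivial when $u=v$), equivalent to $|z|^2\le \Re z/\beta$, i.e., $z\in\Di(1/(2\beta), 1/(2\beta))$. For $\cN_\theta$, using $\cN_\theta = (1-\theta)\opI + \theta\cL_1$ together with a short lemma showing $\cG((1-\theta)\opI + \theta\opB) = (1-\theta) + \theta\,\cG(\opB)$ (which follows from $u'-v' = (1-\theta)(x-y) + \theta(u-v)$ for pairs $(x,u'),(y,v')$ in $(1-\theta)\opI + \theta\opB$), one obtains $\cG(\cN_\theta) = \Di(1-\theta, \theta)$.

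For the reverse inclusion, the plan is to exhibit, for each point $z = re^{\pm i\alpha}$ in the claimed set, a concrete operator in the class whose SRG contains $z$. Working in $\cH = \reals^2$, I would use the linear operator $\opA = rR_\alpha$, where $R_\alpha$ is planar rotation by angle $\alpha$. A direct computation gives $\|\opA x - \opA y\|/\|x-y\| = r$ and $\angle(\opA x - \opA y, x-y) = \alpha$ for all $x\ne y$, so both $re^{\pm i\alpha}$ lie in $\cG(\opA)$. The verification that $\opA$ belongs to $\cL_L$, $\sm\mu$, or $\cC_\beta$ reduces to the same inequality derived in the upper-bound step (namely $r\le L$, $r\cos\alpha\ge\mu$, or $\cos\alpha\ge\beta r$), so every stated point is realized. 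The averaged case is inherited from $\cL_1$ via the same affine lemma. The point $\infty$ in $\cG(\cM)$ and $\cG(\sm\mu)$ is realized separately by any genuinely multi-valued operator in these classes, such as the normal cone of a half-space.

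The main obstacle is not any single hard calculation but the uniform bookkeeping of two orientation-free conventions. First, the ``$\pm$'' in the exponent of the SRG definition arises because a real Hilbert space has no preferred orientation, which forces every SRG to be closed under complex conjugation; this is handled by noting that each claimed set is symmetric about the real axis and that the planar construction produces both $re^{i\alpha}$ and $re^{-i\alpha}$ simultaneously. Second, the $\{\infty\}$ point must be included exactly when the class contains genuinely multi-valued operators, which here applies to $\cM$ and $\sm\mu$ but not to $\cL_L$, $\cN_\theta$, or $\cC_\beta$. Keeping these conventions straight across all five classes is the part most easily botched.
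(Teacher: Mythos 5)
This statement is quoted as a Fact from \cite{ryu_scaled_2021} (Proposition~1 there); the paper supplies no proof of its own, so there is nothing internal to compare against. Your argument is correct and is essentially the standard proof from the cited reference: the forward inclusions follow from rewriting each defining inequality in terms of $\abs{z}$ and $\Re z = \dotp{u-v,x-y}/\norm{x-y}^2$, the reverse inclusions from realizing each point $re^{\pm i\alpha}$ by the scaled planar rotation $rR_\alpha$, and $\cG(\cN_\theta)=\Di(1-\theta,\theta)$ from the affine translation identity. The only slip is cosmetic: the normal cone of a half-space realizes $\infty$ for $\cM$ but is not $\mu$-strongly monotone, so for $\sm\mu$ you should take, e.g., $\mu\opI$ plus that normal cone.
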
 

        We say a class of operators $\cA$ is \emph{SRG-full} if
	\begin{align*}
	\opA\in \cA
	\quad\Leftrightarrow\quad
	\cG(\opA)\subseteq\cG(\cA).
	\end{align*}		%
In essence, a class is SRG-full when it can be entirely characterized by its SRG.
	\begin{fact}[Theorem 2, 3 \cite{ryu_scaled_2021}]
	\label{fact:srg-full}
	Let $\mu$,$\beta$,$L \in (0,\infty)$ and
         $\theta \in (0, 1)$.
	Then, $\cM$, $\sm\mu$, $\cC_\beta$, $\cL_L$, and $\cN_\theta$ are SRG-full classes. 
	In addition, their intersections are also SRG-full classes.
	\end{fact}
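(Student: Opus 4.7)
The plan is to dispatch the forward direction immediately from the definition $\cG(\cA) = \bigcup_{\opA' \in \cA} \cG(\opA')$, and then establish the reverse implication $\cG(\opA) \subseteq \cG(\cA) \Rightarrow \opA \in \cA$ class-by-class. For each of the five base classes, the strategy is to express membership in the explicit SRG region from Fact~\ref{fact:monotone-srg} as an inequality on $r = \|u-v\|/\|x-y\|$ and $\phi = \angle(u-v,x-y)$, use $r\cos\phi = \langle u-v, x-y\rangle/\|x-y\|^2$, and clear $\|x-y\|^2$ to recover exactly the defining property of the class. Whether $\infty$ belongs to the target region controls whether $\opA$ is permitted to be multi-valued: regions missing $\infty$ force single-valuedness via the ``$\cup\{\infty\}$ if not single-valued'' convention.

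Concretely, for $\cL_L$ the disk $\Di(0,L)$ excludes $\infty$ and imposes $r \le L$, giving $L$-Lipschitzness and single-valuedness. For $\cM$ and $\sm\mu$, the half-plane condition $\Re z \ge 0$ (resp.\ $\Re z \ge \mu$) becomes $\langle u-v, x-y\rangle \ge 0$ (resp.\ $\ge \mu\|x-y\|^2$), which is (strong) monotonicity. For $\cC_\beta$, the disk $\Di(1/(2\beta), 1/(2\beta))$ is characterized algebraically by $\beta|z|^2 \le \Re z$, unfolding to $\beta\|u-v\|^2 \le \langle u-v, x-y\rangle$. For $\cN_\theta$, expanding $|z-(1-\theta)|^2 \le \theta^2$ produces
\[
\|u-v\|^2 + (1-2\theta)\|x-y\|^2 \le 2(1-\theta)\langle u-v, x-y\rangle;
\]
defining $\opL := (\opA - (1-\theta)\opI)/\theta$, a direct expansion of $\|\opL x - \opL y\|^2$ shows this inequality is exactly equivalent to $\|\opL x - \opL y\| \le \|x-y\|$, so $\opL \in \cL_1$ and $\opA = (1-\theta)\opI + \theta\opL \in \cN_\theta$.

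For intersections, no further computation is required: if $\cA$ and $\cB$ are SRG-full and $\cG(\opA) \subseteq \cG(\cA \cap \cB)$, then because $\cG(\cA \cap \cB) \subseteq \cG(\cA) \cap \cG(\cB)$ directly from the definition of the SRG of a class, SRG-fullness of $\cA$ and $\cB$ each yields $\opA \in \cA$ and $\opA \in \cB$, hence $\opA \in \cA \cap \cB$; the reverse direction is again trivial. The main obstacle will be the $\cN_\theta$ case, since it is the only class defined via a representation (composition with $\opI$ and $\cL_1$) rather than a direct pointwise inequality; the key insight is recognizing that the algebraic rearrangement of $\cG(\opA) \subseteq \Di(1-\theta, \theta)$ coincides with the 1-Lipschitz inequality for $(\opA - (1-\theta)\opI)/\theta$, after which everything reduces to routine algebra.
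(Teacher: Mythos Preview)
Your argument is correct in every case. The pointwise translation between the SRG regions of Fact~\ref{fact:monotone-srg} and the defining inequalities of the classes is exactly right, the handling of $\infty$ to detect single-valuedness is the correct mechanism, and the $\cN_\theta$ case via $\opL = \theta^{-1}(\opA - (1-\theta)\opI)$ is the standard reduction. The intersection argument is also sound.

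Note, however, that the paper does \emph{not} supply its own proof of this statement: it is recorded as a Fact and attributed to \cite{ryu_scaled_2021} (Theorems~2 and~3 there). So there is no in-paper proof to compare against; your proposal is essentially reconstructing the argument from the cited reference, and it does so along the same lines as the original source.
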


    \begin{fact}[Theorem 4, 5 \cite{ryu_scaled_2021}]
    \label{fact:srg-trans}
         If $\cA$ is a class of operators, then 
	\[
	\cG(\alpha \cA)=\alpha\cG(\cA), \quad
	\cG(\opI+ \cA)=1+\cG(\cA), \quad
	\cG(\cA^{-1}) = \cG(\cA)^{-1}.
	\]
	where $\alpha$ is a nonzero real number.
	If $\cA$ is furthermore SRG-full, then $\alpha \cA, \opI+ \cA$, and $\cA^{-1}$
	are SRG-full.
    \end{fact}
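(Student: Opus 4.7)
The plan is to establish each of the three SRG identities by a direct pointwise computation on the definition, and then derive SRG-fullness preservation as a corollary by a bijection argument. Since $\cG(\cA) = \bigcup_{\opA \in \cA} \cG(\opA)$ and each class-level transformation is defined by applying its operator-level counterpart elementwise, it suffices to verify each identity for a single operator $\opA \colon \cH \rightrightarrows \cH$.

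For $\alpha \cA$, I would pick an arbitrary pair $(x, \alpha u), (y, \alpha v) \in \alpha \opA$ with $x \neq y$ and compute
\[
	\frac{\|\alpha(u-v)\|}{\|x-y\|} \exp\!\bigl[\pm i \angle(\alpha(u-v), x-y)\bigr].
\]
If $\alpha > 0$ the magnitude scales by $\alpha$ and the angle is unchanged; if $\alpha < 0$ the magnitude scales by $|\alpha|$ while $\angle(\alpha(u-v), x-y) = \pi - \angle(u-v, x-y)$, so using $e^{i(\pi-\theta)} = -e^{-i\theta}$ together with the $\pm$ branch, the net effect is still multiplication by $\alpha$. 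The $\{\infty\}$ tag is preserved because $\alpha\opA$ is multi-valued iff $\opA$ is. For $\opI + \opA$, a pair $(x, x+u), (y, y+v) \in \opI + \opA$ yields a complex value depending on $a := x - y$ and $a + b$ where $b := u - v$. Working in the two-dimensional real span of $\{a, b\}$ with an orthonormal frame $\{\hat a, \hat a^\perp\}$, one identifies $b$ with a complex number $z = \tfrac{\|b\|}{\|a\|}e^{\pm i \angle(b,a)}$, one of the SRG branches for $\opA$. Then $a+b$ corresponds to $1 + z$, and $\|a+b\|/\|a\| = |1+z|$ and $\angle(a+b,a) = |\arg(1+z)|$ deliver the identity $\cG(\opI + \opA) = 1 + \cG(\opA)$.

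For the inverse, if $(x,u), (y,v) \in \opA$ with $u \neq v$, then $(u,x), (v,y) \in \opA^{-1}$ produces
\[
	\frac{\|x-y\|}{\|u-v\|} \exp\!\bigl[\pm i \angle(x-y, u-v)\bigr],
\]
which is the complex reciprocal of the corresponding SRG value for $\opA$, using the symmetry $\angle(x-y,u-v) = \angle(u-v,x-y)$. The edge case $u = v$ with $x \neq y$ forces $\opA^{-1}$ to be multi-valued, so $\infty \in \cG(\opA^{-1})$ by definition, consistent with $1/0 = \infty$ in $\ecomplex$ since the same pair yields $0 \in \cG(\opA)$. For SRG-fullness preservation, each operation $T \in \{\alpha(\cdot), \opI + (\cdot), (\cdot)^{-1}\}$ is a self-bijection on operators, with inverse of the same type, and the identities just established show $\cG(T \opB) = f_T(\cG(\opB))$ where $f_T$ is the corresponding bijection on $\ecomplex$ (scaling, translation, complex inversion). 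If $\cA$ is SRG-full, then for any operator $\opB$,
\[
	\opB \in T\cA \iff T^{-1}\opB \in \cA \iff \cG(T^{-1}\opB) \subseteq \cG(\cA) \iff f_T(\cG(T^{-1}\opB)) \subseteq f_T(\cG(\cA)) \iff \cG(\opB) \subseteq \cG(T\cA),
\]
establishing SRG-fullness of $T\cA$.

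The main obstacle I expect is the careful bookkeeping around the $\pm$ sign convention — ensuring that the real-valued angle $\angle(\cdot, \cdot) \in [0, \pi]$ lifts consistently to the double-valued $e^{\pm i \angle(\cdot,\cdot)}$ under each transformation — together with the separate $\{\infty\}$ clause for multi-valued operators. The inverse is the subtlest case, because a single-valued $\opA$ can have a multi-valued $\opA^{-1}$, and one must verify that precisely those pairs $(x,u), (y,v) \in \opA$ with $u = v$ and $x \neq y$ (which produce $0 \in \cG(\opA)$) account for the $\infty$ contribution to $\cG(\opA^{-1})$.
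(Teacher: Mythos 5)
Your proof is correct, but note that the paper itself gives no proof of this statement: it is imported verbatim as Fact~\ref{fact:srg-trans} from Theorems~4 and~5 of the cited SRG paper of Ryu, Hannah, and Yin, and your argument is essentially a faithful reconstruction of the standard proof there (pointwise computation in the two-dimensional span of the relevant difference vectors, plus the change-of-variables/bijection argument for SRG-fullness). The only point worth tightening is the inverse identity, where besides the case $u=v$, $x\neq y$ (giving $0\in\cG(\opA)$ and $\infty\in\cG(\opA^{-1})$) you should also record the symmetric case $x=y$, $u\neq v$, which is what makes $\infty\in\cG(\opA)$ match up with $0\in\cG(\opA^{-1})$.
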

	Note that Fact~\ref{fact:srg-trans} implies that
	$\opJ_{\alpha\cA} = (\opI + \alpha\cA)^{-1}$ is an SRG-full class  for $\alpha > 0$ provided that $\cA$ is an SRG-full class.
	
	
	\begin{fact}\label{fact:interpolable}
	If $\cA$ is an SRG-full class and 
	\[
		\frac{\norm{u - v}}{\norm{x - y}}\exp[i\angle(u - v, x - y)] \in \srg{\cA}
	\]
	for any	$u$, $v$, $x$, $y \in \cH$ with $x \neq y$, there exists a single-valued operator $\opA \in \cA$ such that
	\[
		\opA x = u,\quad \opA y = v. 
	\] 
	\end{fact}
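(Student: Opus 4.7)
The plan is to exhibit the desired interpolating operator as the minimal two-point restriction and then invoke the SRG-fullness of $\cA$ directly. Specifically, define the single-valued operator $\opA$ with $\dom \opA = \{x, y\}$ by $\opA x = u$ and $\opA y = v$; this is well-defined since $x \neq y$. Applying the definition of $\cG$ to this operator, which has essentially only one ordered pair of distinct domain points up to swapping, its scaled relative graph is the at-most-two-element set
\[
\cG(\opA) = \left\{ \frac{\|u-v\|}{\|x-y\|}\exp[i\angle(u-v, x-y)],\ \frac{\|u-v\|}{\|x-y\|}\exp[-i\angle(u-v, x-y)] \right\},
\]
with no $\infty$ element since $\opA$ is single-valued. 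The two listed elements coincide whenever $u = v$ or $u - v$ is (anti)parallel to $x - y$.

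The key observation is that $\cG(\cB)$ is automatically closed under complex conjugation for every operator class $\cB$, because the $\pm$ in the definition of $\cG$ inserts both $z$ and $\conj{z}$ into the SRG simultaneously for every quadruple $(x, u, y, v)$. Combining this with the hypothesis
\[
z = \frac{\|u-v\|}{\|x-y\|}\exp[i\angle(u-v, x-y)] \in \cG(\cA),
\]
we deduce $\conj{z} \in \cG(\cA)$ as well, hence $\cG(\opA) \subseteq \cG(\cA)$. The assumed SRG-fullness of $\cA$ now upgrades this containment to the membership $\opA \in \cA$, and by construction $\opA$ satisfies $\opA x = u$ and $\opA y = v$.

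There is essentially no substantive obstacle here: the entire weight of the argument rests on the SRG-fullness hypothesis itself. The only point one must not overlook is the conjugation-symmetry of $\cG(\cA)$, which is what guarantees that the hypothesis---stated for only the $+$ branch---already accounts for both branches appearing in $\cG(\opA)$. Once that is noticed, the construction of $\opA$ as a two-point operator and the appeal to SRG-fullness are immediate.
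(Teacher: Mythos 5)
Your proposal is correct and follows essentially the same route as the paper: define the two-point operator $\opA=\{(x,u),(y,v)\}$, note its SRG is the conjugate pair, and invoke SRG-fullness. You merely make explicit the conjugation-symmetry of $\cG(\cA)$, which the paper leaves implicit.
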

	\begin{proof}
	Define $\opA=\{(x,u),(y,v)\}$.
	Then, 
	\[
	    \srg{\opA} = 
	    \frac{\norm{u - v}}{\norm{x - y}}
	    \exp[\pm i \angle(u - v, x - y)] \subseteq \srg{\cA}
	\]
	holds, so $\opA \in \cA$ by SRG-fullness.
	\end{proof}

        \begin{fact}\label{fact:single-pt-op}
            Let $\cH$ be a real Hilbert space.
            If $\opA \colon \cH \rightrightarrows \cH$ is a single-valued operator such that $\dom\opA$ is a singleton,
            $\opA$ is an element of any SRG-full class.
        \end{fact}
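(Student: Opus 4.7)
The plan is to exploit the fact that the SRG is defined via pairs of distinct inputs, so when $\dom \opA$ consists of a single point the SRG is forced to be empty, and SRG-fullness trivially places $\opA$ in every SRG-full class.

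First, I would unpack the definition of $\srg{\opA}$ from the preamble. The quotient $\tfrac{\|u-v\|}{\|x-y\|}\exp[\pm i\angle(u-v, x-y)]$ is taken over all $(x,u),(y,v) \in \opA$ with $x \neq y$. Since $\dom\opA$ is a singleton, there do not exist $x, y \in \dom\opA$ with $x \neq y$, so the set of such quotients is empty. The $\{\infty\}$ adjoint is appended only when $\opA$ is not single-valued, which is excluded by hypothesis. Hence $\srg{\opA} = \emptyset$.

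Next, let $\cA$ be an arbitrary SRG-full class of operators. Since $\emptyset \subseteq \srg{\cA}$ vacuously, we have $\srg{\opA} \subseteq \srg{\cA}$. By the definition of SRG-fullness, namely
\[
    \opA \in \cA \quad \Longleftrightarrow \quad \srg{\opA} \subseteq \srg{\cA},
\]
it follows that $\opA \in \cA$, which is exactly the claim.

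There is essentially no obstacle here, as the argument is a one-line consequence of the definitions. The only subtlety worth stating explicitly is the reliance on the convention that $\{\infty\}$ is adjoined to $\srg{\opA}$ \emph{only} for multi-valued $\opA$, so that the assumption of single-valuedness really does force $\srg{\opA}$ to be empty (rather than $\{\infty\}$) when $\dom\opA$ is a singleton.
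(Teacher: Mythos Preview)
Your proof is correct and essentially identical to the paper's: both observe that $\srg{\opA}=\emptyset$ (since there are no distinct input pairs and single-valuedness precludes adjoining $\infty$), and then invoke SRG-fullness via $\emptyset\subseteq\srg{\cA}$.
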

        \begin{proof}
            $\srg{\opA} = \emptyset$ is a subset of the SRG of any SRG-full class.
        \end{proof}

	\paragraph{\textbf{Arc property}}
    Denote $\conj{z}$ the conjugate of $z \in \complex$.
	Define the right-hand arc between $z, \conj{z} \in \complex$ as
	\[
		\rarc(z, \conj{z})	=  
		\left\{
			r e^{i(1 - 2\theta)\varphi} \,\middle|\, 
			z = re^{i\varphi}, \varphi \in (-\pi, \pi], r \ge 0, \theta \in [0, 1]
		\right\}
	\]
	and the left-hand arc between $z$ and $\conj{z}$ as
	\[
		\larc(z, \conj{z})	= -\rarc(-z, -\conj{z}).
	\]
        Note that both $r$ and $\varphi$ used in defining the right-hand arc are uniquely determined unless $z = 0$, so the set defines an arc, not a cone.
	If an SRG-full class $\cA$ satisfies 
	\[
		\rarc(z, \conj{z})	\subseteq \srg{\cA}, \qquad \forall z \in \srg{\cA}\setminus 
		\{\infty\},
	\]
	we say $\cA$ satisfies the \emph{right-arc property}. Similarly, if 
	\[
		\larc(z, \conj{z})	\subseteq \srg{\cA}, \qquad \forall z \in \srg{\cA}\setminus 
		\{\infty\},
	\]
	holds, we say $\cA$ has the \emph{left-arc property}.
	If an SRG-full class $\cA$ has either the right-arc property or the left-arc property,
	then it is said to have \emph{an arc property}.

	\paragraph{\textbf{The maximum modulus theorem}}
    Define the boundary of $A \subseteq \complex$ as $\partial A = \mathrm{cl}{A} \setminus \mathrm{int} A$,
    where $\mathrm{cl}{A}$ and $\mathrm{int} A$ are the closure and interior of $A$, respectively.

	\begin{fact}[The maximum modulus theorem \cite{SteinShakarchi2010_complex}]\label{fact:max-mod}
		Let $D\subset\complex$ be a compact set.
		If $h\colon\complex  \rightarrow\complex$ is holomorphic on all of $\,\complex$, then $\max_{z\in D}|h(z)|$ is attained at some point of $\partial D$.
	\end{fact}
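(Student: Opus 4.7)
The plan is to combine the mean value property of holomorphic functions with the identity theorem. Since $D$ is compact and $|h|$ is continuous on $\complex$, the maximum $M = \max_{z \in D}|h(z)|$ is attained at some $z_0 \in D$. If $z_0 \in \partial D$ the claim is immediate, so I may assume $z_0 \in \mathrm{int}\, D$.

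Next I would choose $r>0$ small enough that $\Di(z_0, r) \subset \mathrm{int}\, D$. The Cauchy integral formula applied to $h$ on $\Ci(z_0, r)$ yields the mean value identity $h(z_0) = \frac{1}{2\pi}\int_0^{2\pi} h(z_0 + re^{i\theta})\,d\theta$, from which $M = |h(z_0)| \le \frac{1}{2\pi}\int_0^{2\pi}|h(z_0 + re^{i\theta})|\,d\theta \le M$. Equality throughout, combined with continuity of $|h|$, forces $|h| \equiv M$ on $\Ci(z_0, r)$; letting $r$ range over $(0, r_0)$ for a suitable $r_0 > 0$, I conclude that $|h| \equiv M$ on an open neighborhood $U$ of $z_0$.

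The next step is to upgrade ``$|h|$ constant on $U$'' to ``$h$ constant on $U$''. Writing $h = u + iv$, the identity $u^2 + v^2 \equiv M^2$ on $U$, together with the Cauchy--Riemann equations $u_x = v_y$ and $u_y = -v_x$, forces the partial derivatives of $u$ and $v$ to vanish throughout $U$, so $h$ is constant on $U$. Since $h$ is entire and $U$ is a nonempty open set, the identity theorem then gives that $h$ is constant on all of $\complex$. Consequently $|h| \equiv M$ on $\complex$, and the maximum is attained at every point of $\partial D$; note that $\partial D$ is nonempty whenever $D$ is nonempty, since any compact $D \subset \complex$ is bounded and hence a proper subset of the unbounded set $\complex$.

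The main obstacle is the passage from ``$|h|$ constant'' to ``$h$ constant'', which, though elementary, is the one step that genuinely invokes the analytic structure of $h$ beyond the modulus estimate coming from the mean value property. This is where either the Cauchy--Riemann equations or, alternatively, the open mapping theorem must be brought to bear; everything else in the argument is routine compactness and integral estimation.
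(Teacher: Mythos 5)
The paper states this as a cited fact from Stein--Shakarchi and gives no proof of its own, so there is nothing to compare against beyond the standard textbook argument---which is exactly what you give. Your proof is correct: the mean-value/Cauchy--Riemann argument shows $h$ is constant near an interior maximum, and your use of entirety plus the identity theorem to conclude $h$ is constant on all of $\complex$ neatly handles the fact that $D$ here is an arbitrary compact set (whose interior may be disconnected), where a naive appeal to the maximum modulus principle on a domain would not directly apply. The observation that $\partial D\neq\emptyset$ for nonempty compact $D$ (since a clopen nonempty proper subset of the connected space $\complex$ cannot exist) is also a necessary and correctly handled detail.
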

	

	
	This maximum modulus theorem simplifies our calculations of Sections~\ref{s:dys_cont} and \ref{s:dys_avg} via the following lemma.
	
	\begin{lemma}\label{lem:max-mod}
	Let $f \colon \complex^3 \to \complex$ be a polynomial of three complex variables.
	Let $A$, $B$, and $C$ be compact subsets of $\,\complex$.
	Then, 
	\[
		\max_{\substack{z_A \in A , z_B \in B ,\\ z_C \in C}} \abs{f(z_A, z_B, z_C)}
		=
		\max_{\substack{z_A \in \partial A , z_B \in \partial B ,\\ z_C \in \partial C}} \abs{f(z_A, z_B, z_C)}.
	\]
	\end{lemma}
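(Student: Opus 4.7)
The plan is to reduce the three-variable maximum to the claimed boundary maximum by applying the maximum modulus theorem (Fact~\ref{fact:max-mod}) one variable at a time. For each fixed pair $(z_B, z_C) \in B \times C$, the map $z_A \mapsto f(z_A, z_B, z_C)$ is a polynomial in a single complex variable $z_A$ and is therefore entire. Since $A \subseteq \complex$ is compact, Fact~\ref{fact:max-mod} gives
\[
    \max_{z_A \in A} |f(z_A, z_B, z_C)| = \max_{z_A \in \partial A} |f(z_A, z_B, z_C)|.
\]
Taking the max over $(z_B, z_C) \in B \times C$ on both sides yields
\[
    \max_{A \times B \times C} |f| = \max_{\partial A \times B \times C} |f|.
\]

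Next, I would repeat the argument for $z_B$. Fix $(z_A, z_C) \in \partial A \times C$; then $z_B \mapsto f(z_A, z_B, z_C)$ is again a polynomial in one complex variable and therefore entire, so another application of Fact~\ref{fact:max-mod} (with compact set $B$) gives
\[
    \max_{\partial A \times B \times C} |f| = \max_{\partial A \times \partial B \times C} |f|.
\]
A third identical application, this time fixing $(z_A, z_B) \in \partial A \times \partial B$ and varying $z_C$ over the compact set $C$, yields
\[
    \max_{\partial A \times \partial B \times C} |f| = \max_{\partial A \times \partial B \times \partial C} |f|,
\]
and chaining the three equalities finishes the proof.

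There is no real obstacle in this argument, since the polynomial structure of $f$ guarantees that each partial function (with the other two variables fixed) is entire, which is exactly the hypothesis of Fact~\ref{fact:max-mod}. The only mildly delicate point worth noting is that the iterated maxima are well-defined: compactness of $A$, $B$, $C$ (and hence of $\partial A$, $\partial B$, $\partial C$, and all product sets) together with continuity of $|f|$ guarantees that every maximum in the chain is attained, which is what allows me to swap the order of maximization freely when peeling off one variable at a time.
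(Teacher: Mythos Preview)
Your proof is correct and follows essentially the same approach as the paper: apply the one-variable maximum modulus theorem (Fact~\ref{fact:max-mod}) to one coordinate at a time, using that a polynomial is entire in each variable with the others fixed. The only cosmetic difference is that the paper first picks a global maximizer $(z_A^0,z_B^0,z_C^0)$ and then shifts each coordinate to the boundary one at a time, whereas you work with equalities of maxima directly; the content is the same.
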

\begin{proof}
    As $f$ is a polynomial, $\abs{f}$ is continuous on its compact domain $A \times B \times C \subset \complex^3$. Hence, $\abs{f}$ attains its maximum at some point  $(z_A^0, z_B^0, z_C^0) \in A \times B \times C$.
    
    Furthermore, $f$ is holomorphic in each argument with the other two arguments fixed as it is a polynomial.
	Then, if we fix $z_B = z_B^0$ and $z_C = z_C^0$,
	there exists some $z_A^1 \in \partial A$ such that 
	\[
	    \abs{f(z_A^1, z_B^0, z_C^0)}
	    = \abs{f(z_A^0, z_B^0, z_C^0)}
	\]
	by Fact~\ref{fact:max-mod}. Applying similar procedure with respect to other arguments, we obtain $z_B^1 \in \partial B$ and $z_C^1 \in \partial C$ such that
	\begin{align*}
	    \abs{f(z_A^1, z_B^1, z_C^1)}
	    &= \abs{f(z_A^0, z_B^0, z_C^0)}\\
	    &= \max_{z_A \in A, z_B \in B, z_C \in C}
	    \, \abs{f(z_A, z_B, z_C)}.
	\end{align*}
	
	\end{proof}
 
\section{SRG theory}
	\label{s:srg_theory}
	Let
	\begin{align*}
		\opT_{\opA, \opB, \opC, \alpha, \lambda} 
		&=
		(1 - \lambda)\opI + \lambda\left( 
			\opI - \resa{\opB} + \resa{\opA}(2\resa{\opB} - \opI - \alpha \opC \resa{\opB})
		 \right) \\
		&=
		\opI - \lambda \resa{\opB} + \lambda \resa{\opA} (2\resa{\opB} - \opI - \alpha \opC \resa{\opB}) 
	\end{align*}
	be the (averaged) DYS operator for operators $\opA\colon \cH\rightrightarrows\cH$, $\opB\colon \cH\rightrightarrows\cH$, and $\opC\colon \cH\rightrightarrows\cH$ 
	with stepsize $\alpha \in (0, \infty)$ and averaging parameter $\lambda \in (0, \infty)$.
Let \[
		\opT_{\cA, \cB, \cC, \alpha, \lambda} = \left\{ 
			\opT_{\opA, \opB, \opC, \alpha, \lambda} \, |\, \opA \in \cA, \opB \in \cB, \opC \in \cC
			\right\}
	\]
	 be the class of DYS operators for operator classes $\cA$, $\cB$, and $\cC$ with $\alpha, \lambda \in (0, \infty)$.
	 Analogously, define
	\begin{align*}
		\zeta_{\text{DYS}}(z_A, z_B, z_C; \alpha, \lambda) 
		&= 1 - \lambda z_B + \lambda z_A (2z_B - 1 - \alpha z_C z_B)\\
		&= 1 - \lambda z_A - \lambda z_B + \lambda (2- \alpha z_C)z_A z_B
	\end{align*}
	and
	\[
		\cZ^{\text{DYS}}_{\cA, \cB, \cC, \alpha, \lambda} = 
		\left\{ \zeta_{\text{DYS}}(z_A, z_B, z_C; \alpha, \lambda)
		\,|\, z_A \in \srg{\resa{\cA}}, z_B \in \srg{\resa{\cB}}, z_C \in \srg{\cC}
		\right\}
	\]
   for operator classes $\cA$, $\cB$, and $\cC$ with $\alpha, \lambda \in (0, \infty)$.
   In the Theorems~\ref{thm:lipschitz-dys-a-lip-sm}, \ref{thm:lipschitz-dys-a-lip-b-sm}, and \ref{thm:lipschitz-dys-a-lip-c-sm} that we later prove, the fact that $\zeta_{\text{DYS}}(z_A, z_B, z_C; \alpha, \lambda) $ is symmetric in $z_A$ and $z_B$ will play an important role.


	

In this section, we establish the correspondence between $\opT_{\cA, \cB, \cC, \alpha, \lambda}$ and $\cZ^{\text{DYS}}_{\cA, \cB, \cC, \alpha, \lambda} $ through the following theorem.
	\begin{theorem}\label{thm:tight-dys-main}
Let $\alpha, \lambda \in (0, \infty)$.
		Let $\cA$ and $\cB$ be SRG-full classes of monotone operators and assume $\opI + \alpha\cA$ has the right-arc property.
		Let $\cC$ be an SRG-full class of single-valued operators.
            Assume $\srg{\cA}$, $\srg{\cB}$, $\srg{\cC} \ne \emptyset$.
		If $s \ne 1$ is a real number such that $(2 - \lambda(1-s)^{-1})\opI - \alpha \cC$ has an arc property, then 
		\[
		\abs{\srg{\opT_{\cA, \cB, \cC, \alpha, \lambda}}-s} = \left| \cZ^{\text{DYS}}_{\cA, \cB, \cC, \alpha, \lambda} - s \right|.
		\]
	\end{theorem}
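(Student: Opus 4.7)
My plan is to prove the equality by establishing both inclusions of sets. The motivating observation is that in the abelian (scalar) setting, where $\resa{\opB}, \opC, \resa{\opA}$ act as complex multiplication by $z_B, z_C, z_A$ respectively, a direct computation shows that the DYS operator acts as multiplication by $\zeta_{\text{DYS}}(z_A, z_B, z_C;\alpha,\lambda)$. Setting $\kappa = \lambda/(1-s)$, we obtain the factorization $\zeta_{\text{DYS}} - s = (1-s)[1 - \kappa z_A - \kappa z_B + \kappa(2 - \alpha z_C)z_A z_B]$, which explains why the hypothesis is phrased in terms of the arc property of $(2 - \kappa)\opI - \alpha \cC$: this is exactly the operator whose SRG angular freedom must be absorbed to reduce the general Hilbert-space calculation to the abelian model.

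For the forward inclusion $\abs{\srg{\opT_{\cA,\cB,\cC,\alpha,\lambda}} - s} \subseteq \abs{\cZ^{\text{DYS}}_{\cA,\cB,\cC,\alpha,\lambda} - s}$, I would fix $\opA \in \cA$, $\opB \in \cB$, $\opC \in \cC$ and distinct $x, y \in \cH$, and introduce the chain of vectors $c = x - y$, $u - v = \resa{\opB}x - \resa{\opB}y$, $b = \opC\resa{\opB}x - \opC\resa{\opB}y$, $d = 2(u-v) - c - \alpha b$, and $e = \resa{\opA}(2\resa{\opB}x - x - \alpha\opC\resa{\opB}x) - \resa{\opA}(2\resa{\opB}y - y - \alpha\opC\resa{\opB}y)$, so that $\opT x - \opT y - sc = (1-s)c - \lambda(u-v) + \lambda e$. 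Defining $z_B, z_C, z_A$ as the SRG values of $\resa{\opB}, \opC, \resa{\opA}$ at the respective point pairs, these lie in $\srg{\resa{\cB}}$, $\srg{\cC}$, $\srg{\resa{\cA}}$. The vectors $c, u-v, b, d, e$ are generally not coplanar, so the norm of $\opT x - \opT y - sc$ does not immediately match $|\zeta_{\text{DYS}}(z_A, z_B, z_C) - s|\cdot\|c\|$. The arc property of $(2-\kappa)\opI - \alpha\cC$ provides rotational slack for the angle of $b$ relative to $u-v$ while staying in the class, and the right-arc property of $\opI + \alpha\cA$ transfers via Fact~\ref{fact:srg-trans} to an arc property for $\resa{\cA}$, providing analogous slack for $e$ relative to $d$. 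Combining these with the spherical triangle inequality (Fact~\ref{fact:sphere-tri-ineq}) to compose the arcs, one argues that the ratio $\|\opT x - \opT y - sc\|/\|c\|$ equals $|\zeta_{\text{DYS}}(z_A', z_B', z_C';\alpha,\lambda) - s|$ for some $z_A', z_B', z_C'$ in the appropriate SRGs.

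For the reverse inclusion, given $\zeta = \zeta_{\text{DYS}}(z_A, z_B, z_C;\alpha,\lambda)$ with $z_A \in \srg{\resa{\cA}}, z_B \in \srg{\resa{\cB}}, z_C \in \srg{\cC}$, I would work in a $2$-dimensional subspace of $\cH$ and construct operators that reproduce the abelian model directly. Pick distinct $x, y$, set $u - v$, $\opC u - \opC v$, and $p - q$ to be complex multiples of $c = x - y$ with ratios $z_B$, $z_C$, and $z_A$ respectively (interpreting the $2$D subspace as $\complex$), and invoke Fact~\ref{fact:interpolable} together with SRG-fullness (Fact~\ref{fact:srg-full}) to build single-valued operators $\opA \in \cA$, $\opB \in \cB$, $\opC \in \cC$ that realize these actions on the chosen pairs of points (with Fact~\ref{fact:single-pt-op} handling points where no SRG constraint arises). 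The abelian polynomial identity then yields $\opT x - \opT y = \zeta\, c$, so that $\|\opT x - \opT y - sc\|/\|c\| = |\zeta - s|$. The principal obstacle is the forward direction's angular bookkeeping in the general Hilbert space: the hypothesized arc properties of $\opI + \alpha\cA$ and of $(2 - \lambda(1-s)^{-1})\opI - \alpha\cC$ are precisely the tools needed to absorb the mismatch between the multi-dimensional geometry of the DYS chain and the one-variable polynomial identity that defines $\cZ^{\text{DYS}}$.
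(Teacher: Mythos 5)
Your overall strategy --- proving the two set inclusions separately, with the reverse inclusion obtained by realizing the abelian model on a two-dimensional subspace via Fact~\ref{fact:interpolable} and Fact~\ref{fact:single-pt-op} --- is aligned with the paper's, and your reverse-inclusion construction is sound (the paper obtains that inclusion less directly, as the ``$\supseteq$'' halves of a chain of equalities). The genuine gap is in the forward inclusion, which is the heart of the theorem and which you leave at ``one argues that.'' The difficulty is not only that the vectors $c$, $u-v$, $b$, $d$, $e$ are non-coplanar; it is that the DYS expression mixes \emph{compositions} (to which the spherical triangle inequality and arc properties apply) with \emph{sums} (to which they do not). For instance, $\norm{(1-s)c - \lambda(u-v) + \lambda e}$ is the norm of a sum of three vectors, and knowing the SRG value of each summand relative to $c$ does not determine this norm, nor does an arc or triangle-inequality argument on the summands control it. Your plan as stated does not explain how to convert this additive structure into exact complex arithmetic.

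The paper's resolution, which you would need to reinvent, is twofold. First, it measures every term against a single reference operator $\resa{\opB} - t\opI$ (with $t = (1-s)/\lambda$), using a quotient-operator calculus (Lemma~\ref{lem:quotient-calculus}) in which sums of operators evaluated at the same pair $x,y$ and divided by the same reference become exact additions of complex numbers, and moduli multiply exactly along chains of references. This is also where the combination $(2 - t^{-1})\opI - \alpha\opC$ actually comes from: one rewrites $2\resa{\opB} - \opI - \alpha\opC\resa{\opB} = ((2-t^{-1})\opI - \alpha\opC)\resa{\opB} + t^{-1}(\resa{\opB} - t\opI)$, so that the only places where angles are uncontrolled are the two compositions, by $\resa{\opA}$ and by $(2-t^{-1})\opI - \alpha\opC$. (Your scalar factorization $\zeta - s = (1-s)[\cdots]$ is suggestive but does not produce this operator-level identity.) Second, for those two compositions one needs a standalone SRG product theorem, $\srg{\cA'\opB'} = \srg{\cA'}\srg{\opB'}$ for an SRG-full class $\cA'$ with an arc property composed with a single fixed operator $\opB'$ (Theorem~\ref{thm:split-srg}); its ``$\subseteq$'' half is exactly the arc-plus-spherical-triangle-inequality argument you gesture at, but isolated so that it only ever handles one composition at a time. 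Without these two ingredients (plus the minor bookkeeping for $\dim\cH = 1$ via an embedding, and the degenerate cases $(\resa{\opB}-t\opI)x = (\resa{\opB}-t\opI)y$ and $x \notin \dom\resa{\opB}$), the forward inclusion is asserted rather than proved.
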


	In Section~\ref{s:dys_avg}, we use Theorem~\ref{thm:tight-dys-main} with $s\ne 0$.
 In Section~\ref{s:dys_cont}, we use Theorem~\ref{thm:tight-dys-main} with $s=0$ to obtain contraction factors of the DYS operator by bounding the maximum modulus of $\cZ^{\text{DYS}}_{\cA, \cB, \cC, \alpha, \lambda}$ as described in the following corollary.
	
	\begin{corollary}\label{cor:tight-coeff-dys}
		Let $\cA$, $\cB$, $\cC$, $\alpha$, and $\lambda$ be as in Theorem~\ref{thm:tight-dys-main} and $(2 - \lambda)\opI - \alpha \cC$ has an arc property.)
		Then,
		\[
			\sup_{\substack{\opT \in \opT_{\cA, \cB, \cC, \alpha, \lambda} \\ x, y \in \dom\opT,\, x \ne y}}
			\frac{\norm{\opT x - \opT y}}{\norm{x - y}}
			= \sup_{z \in \cZ^{\text{DYS}}_{\cA, \cB, \cC, \alpha, \lambda}} \abs{z}.
		\]
	\end{corollary}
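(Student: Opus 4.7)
The plan is to derive this corollary as an almost immediate consequence of Theorem~\ref{thm:tight-dys-main} applied with $s = 0$, which yields the set equality
\[
\abs{\srg{\opT_{\cA, \cB, \cC, \alpha, \lambda}}} = \abs{\cZ^{\text{DYS}}_{\cA, \cB, \cC, \alpha, \lambda}}.
\]
Taking suprema of both sides directly gives
\[
\sup_{z \in \srg{\opT_{\cA, \cB, \cC, \alpha, \lambda}}} \abs{z} = \sup_{z \in \cZ^{\text{DYS}}_{\cA, \cB, \cC, \alpha, \lambda}} \abs{z},
\]
which is the right-hand side of the target identity. All that remains is to rewrite the left-hand supremum in terms of the Lipschitz-type ratio.

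For this, I would first observe that every $\opT \in \opT_{\cA, \cB, \cC, \alpha, \lambda}$ is single-valued: since $\cA$ and $\cB$ consist of monotone operators, $\resa{\cA}$ and $\resa{\cB}$ are single-valued, and $\cC$ is assumed to be a class of single-valued operators, so the defining composition for $\opT_{\opA, \opB, \opC, \alpha, \lambda}$ produces a single-valued operator. Consequently, by the definition of the SRG, no element in $\srg{\opT_{\cA, \cB, \cC, \alpha, \lambda}} = \bigcup_{\opT \in \opT_{\cA, \cB, \cC, \alpha, \lambda}} \srg{\opT}$ equals $\infty$, and every $z \in \srg{\opT}$ has the form $\frac{\norm{\opT x - \opT y}}{\norm{x-y}} \exp[\pm i \angle(\opT x - \opT y, x - y)]$ for some $x, y \in \dom\opT$ with $x \ne y$, so $\abs{z} = \frac{\norm{\opT x - \opT y}}{\norm{x - y}}$.

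Therefore
\[
\sup_{z \in \srg{\opT_{\cA, \cB, \cC, \alpha, \lambda}}} \abs{z} = \sup_{\substack{\opT \in \opT_{\cA, \cB, \cC, \alpha, \lambda} \\ x,y \in \dom\opT,\, x \ne y}} \frac{\norm{\opT x - \opT y}}{\norm{x - y}},
\]
and combining this with the equality obtained from Theorem~\ref{thm:tight-dys-main} closes the argument. There is no serious obstacle to overcome here: the substantive content is already packed into Theorem~\ref{thm:tight-dys-main}, and this corollary is a bookkeeping step. The only subtle point worth flagging explicitly is verifying single-valuedness of $\opT$, since otherwise the SRG would contain $\infty$ and the supremum would collapse trivially.
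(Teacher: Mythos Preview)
Your proposal is correct and follows essentially the same approach as the paper: apply Theorem~\ref{thm:tight-dys-main} with $s=0$ to obtain the set equality $\abs{\srg{\opT_{\cA,\cB,\cC,\alpha,\lambda}}}=\abs{\cZ^{\text{DYS}}_{\cA,\cB,\cC,\alpha,\lambda}}$, then rewrite $\abs{\srg{\opT_{\cA,\cB,\cC,\alpha,\lambda}}}$ as the set of Lipschitz ratios and take suprema. The paper's proof is terser and leaves the single-valuedness of $\opT$ implicit, whereas you spell out why $\infty\notin\srg{\opT}$; this extra care is appropriate and does not change the underlying argument.
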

	\begin{proof}
		With $s = 0$ in Theorem~\ref{thm:tight-dys-main}, we have
		\[
			\abs{\srg{\opT_{\cA, \cB, \cC, \alpha, \lambda}}} = \left| \cZ^{\text{DYS}}_{\cA, \cB, \cC, \alpha, \lambda} \right|.
		\]
		and the left-hand side can be expressed as
		\[
			\abs{\srg{\opT_{\cA, \cB, \cC, \alpha, \lambda}}} = 
			\left\{ \frac{\norm{\opT x - \opT y}}{\norm{x - y}} 
			\, \middle|\,
			\opT \in \opT_{\cA, \cB, \cC, \alpha, \lambda},\,  x, y \in \dom\opT,\, x \ne y \right\}.
		\]
	
	\end{proof}

Notably, Corollary~\ref{cor:tight-coeff-dys} allows one to compute the tight contraction factor by calculating the maximum modulus of a set of complex numbers $\cZ^{\text{DYS}}_{\cA, \cB, \cC, \alpha, \lambda}$ or, equivalently, by bounding the maximum of $ |\zeta_{\text{DYS}}(z_A, z_B, z_C; \alpha, \lambda)|$ subject to the constraints $z_A \in \srg{\resa{\cA}}, $ $z_B \in \srg{\resa{\cB}}, $ and $z_C \in \srg{\cC}$ on the inputs to the complex polynomial $\zeta_{\text{DYS}}$.

The arc property requirement on $\cC$ holds under the standard assumption that $\opC$ is cocoercive. 
Specifically, if $\alpha$, $\beta_\cC \in (0, \infty)$ and $\cC = \coco{\beta_\cC}$, then $\srg{(2 - \lambda)\opI - \alpha \cC}$ is a disk with its center on the real axis, thus an arc property is satisfied.
However, the conclusion of Corollary~\ref{cor:tight-coeff-dys} may fail if the arc property is not satisfied, as we demonstrate in the arXiv version of this paper \cite{lee2022convergence} by comparing the maximum modulus with tight factor obtained using the performance estimation problem (PEP) framework \cite{drori2014performance,taylor2017smooth,ryu_operator_2020}. 

When the arc property fails, we can consider an enlarged class $\cC' \supset \cC$ such that $(2 - \lambda)\opI - \alpha \cC'$ does satisfy an arc property.
	
	\begin{corollary}\label{cor:tight-coeff-dys-completion}
		Let $\cA$, $\cB$, $\alpha$, and $\lambda$ be as in Theorem~\ref{thm:tight-dys-main} and let $s=0$.
		Assume $\cC$ is a class of single-valued operators, but do not assume that $(2 - \lambda)\opI - \alpha \cC$ has an arc property.
		If there exists some SRG-full class $\cC'$ of single-valued operators such that
		\[
			 \cC'\supseteq \cC, \quad (2 - \lambda)\opI - \alpha\cC'	\text{ has an arc property}
		\]
		then 
		\[
			\sup_{\substack{\opT \in \opT_{\cA, \cB, \cC, \alpha, \lambda} \\ x, y \in \dom\opT, x \ne y}}
			\frac{\norm{\opT x - \opT y}}{\norm{x - y}}
			\le \sup_{z \in \cZ^{\text{DYS}}_{\cA, \cB, \cC', \alpha, \lambda}} \abs{z}.
		\]
	 (The right-hand side depends on $\cC'$, rather than $\cC$.)
	\end{corollary}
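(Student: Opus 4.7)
The plan is a short reduction to Corollary~\ref{cor:tight-coeff-dys} via the inclusion $\cC \subseteq \cC'$, with no deep content beyond class monotonicity of the DYS construction.

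First, I would observe that since every $\opC \in \cC$ also lies in $\cC'$, the corresponding DYS operator classes satisfy
$\opT_{\cA, \cB, \cC, \alpha, \lambda} \subseteq \opT_{\cA, \cB, \cC', \alpha, \lambda}$.
Taking the supremum of $\norm{\opT x - \opT y}/\norm{x - y}$ over a smaller collection of operators cannot increase its value, so
\[
\sup_{\substack{\opT \in \opT_{\cA, \cB, \cC, \alpha, \lambda} \\ x, y \in \dom\opT,\, x \ne y}} \frac{\norm{\opT x - \opT y}}{\norm{x - y}}
\le
\sup_{\substack{\opT \in \opT_{\cA, \cB, \cC', \alpha, \lambda} \\ x, y \in \dom\opT,\, x \ne y}} \frac{\norm{\opT x - \opT y}}{\norm{x - y}}.
\]

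Next, I would apply Corollary~\ref{cor:tight-coeff-dys} to the enlarged class $\cC'$ in place of $\cC$. By hypothesis, $\cC'$ is SRG-full, consists of single-valued operators, and $(2-\lambda)\opI - \alpha\cC'$ has an arc property, while the conditions on $\cA$, $\cB$, $\alpha$, $\lambda$ inherited from Theorem~\ref{thm:tight-dys-main} continue to hold. Thus all hypotheses of Corollary~\ref{cor:tight-coeff-dys} are satisfied for $\cC'$, and invoking it gives
\[
\sup_{\substack{\opT \in \opT_{\cA, \cB, \cC', \alpha, \lambda} \\ x, y \in \dom\opT,\, x \ne y}} \frac{\norm{\opT x - \opT y}}{\norm{x - y}}
= \sup_{z \in \cZ^{\text{DYS}}_{\cA, \cB, \cC', \alpha, \lambda}} \abs{z}.
\]
Chaining this equality onto the previous inequality yields the claimed bound.

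There is essentially no obstacle: the proof is a one-line monotonicity argument plus a direct invocation of the preceding corollary. The only bookkeeping issue is the nonemptiness condition $\srg{\cC'}\neq \emptyset$ from Theorem~\ref{thm:tight-dys-main}; this may be assumed without loss of generality, since if $\srg{\cC'} = \emptyset$ then every $\opC' \in \cC'$ (and hence every $\opC \in \cC$) has at most one point in its domain, in which case every $\opT \in \opT_{\cA, \cB, \cC, \alpha, \lambda}$ has a domain of at most one point and the left-hand supremum is vacuous.
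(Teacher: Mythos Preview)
Your proposal is correct and follows exactly the paper's own proof: use $\cC \subseteq \cC'$ to get $\opT_{\cA, \cB, \cC, \alpha, \lambda} \subseteq \opT_{\cA, \cB, \cC', \alpha, \lambda}$, bound the supremum accordingly, and then invoke Corollary~\ref{cor:tight-coeff-dys} with $\cC'$ in place of $\cC$. Your extra remark on the nonemptiness of $\srg{\cC'}$ is a harmless addition that the paper leaves implicit.
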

	\begin{proof}
		Since $\cC \subseteq \cC'$, we have $\opT_{\cA, \cB, \cC, \alpha, \lambda} \subseteq \opT_{\cA, \cB, \cC', \alpha, \lambda}$.
		Therefore,
		\[
			\sup_{\substack{\opT \in \opT_{\cA, \cB, \cC, \alpha, \lambda} \\ x, y \in \dom\opT,\, x \ne y}}
			\frac{\norm{\opT x - \opT y}}{\norm{x - y}}
			\le
			\sup_{\substack{\opT \in \opT_{\cA, \cB, \cC', \alpha, \lambda} \\ x, y \in \dom\opT,\, x \ne y}}
			\frac{\norm{\opT x - \opT y}}{\norm{x - y}}
			= \sup_{z \in \cZ^{\text{DYS}}_{\cA, \cB, \cC', \alpha, \lambda}} \abs{z},
		\]
		where the last equality follows from Corollary~\ref{cor:tight-coeff-dys}.
	
	\end{proof}
	Note that we can apply Theorem~\ref{thm:tight-dys-main}, as well as Corollaries~\ref{cor:tight-coeff-dys} and \ref{cor:tight-coeff-dys-completion}, for $\cA = \sm{\mu_\cA} \cap \coco{\beta_\cA} \cap \lip{L_\cA}$, where $\mu_\cA \in [0, \infty)$, 
	$\beta_\cA \in [0, \infty)$, and $L_\cA \in (0, \infty]$. (This includes classes like $\cM \cap \lip{L_\cA}$ by setting $\mu_\cA = \beta_\cA = 0$.)
	To see this, observe that SRGs of $\opI + \alpha\sm{\mu_\cA}$, $\opI + \alpha\coco{\beta_\cA}$, and $\opI + \alpha\lip{L_\cA}$ are in form of a disk with its center being a nonnegative real number or $\left\{ z \in \complex\,|\,\Re z \ge a \right\}$ for some nonnegative $a$ and     thus satisfy the right-arc property. 
	Therefore, $\opI + \alpha\cA$, as an intersection of them, is an SRG-full class of monotone operators satisfying the right-arc property.

	The remainder of this section proves Theorem~\ref{thm:tight-dys-main} with the following structure.
	Section~\ref{ss:quotient-operator} defines the quotient operator and characterizes some useful properties, 
	Section~\ref{ss:qo-properties} establishes an SRG product theorem and apply it to the quotient operator,
	and Section~\ref{ss:pf-main-thm} proves Theorem~\ref{thm:tight-dys-main}.

	\subsection{Quotient operator}\label{ss:quotient-operator}
	For $\opA \colon \cH \rightrightarrows \cH$ and $\opB \colon \cH \rightrightarrows \cH$, define the \emph{quotient operator of $\opA$ over  $\opB$} as 
	\[
		\frac{\opA}{\opB} = \opA / \opB = \opA \opB^{-1}.
	\]
	Correspondingly, for a class of operators $\cA$, define
	\[
	    \frac{\cA}{\opB} = \cA/\opB = \cA\opB^{-1}.
	\]
	For a single-valued operator $\opA\colon \cH \rightrightarrows \cH$, define the \emph{selection} of $\opA$ with respect to distinct $x$, $y \in \cH$ as
	\[
		\opA_{x,y} = 
            \begin{cases}
                \{(x, \opA x), (y, \opA y)\} & \text{if } x, y \in \dom\opA, \\
                \{(x, \opA x)\} & \text{if } x \in \dom\opA, y \notin \dom\opA, \\
                \{(y, \opA y)\} & \text{if } x \notin \dom\opA, y \in \dom\opA, \\
                \emptyset & \text{otherwise}.
            \end{cases}
	\]
    We clarify that $\opA_{x,y} = \opA/\opI_{x,y}$.
    
	We point out some immediate consequences.
	Let $\opA \colon \cH \rightrightarrows \cH$ and $\opB \colon \cH \rightrightarrows \cH$ be single-valued operators. 
	Then, for distinct $x$, $y \in \cH$,
	\begin{align}\label{eq:quotient-op-value}
		\srg{\opA / \opB_{x,y}} &=
		\begin{cases}
		    \diffnormfrac{\opA}{\opB}{x}{y}
		    \exp[\pm i \diffang{\opA}{\opB}{x}{y}]
		    &
		    \text{if } \opB x \neq \opB y,
		    \\
		    \infty
		    &
		    \text{if }\opA x \neq \opA y, \opB x = \opB y,
		    \\
		    \emptyset
		    &
		    \text{otherwise.}
		\end{cases}
	\end{align}
    If $\opA \colon \cH \rightrightarrows \cH$ is a single-valued operator, then
    \begin{equation}
            \bigcup_{\substack{x, y \in \cH \\ x \neq y }}\srg{\opA/\opI_{x,y}}=\srg{\opA}.
            \label{eq:srg-is-union-of-sel-ops}
    \end{equation}
	
        Informally, the following lemma says that for SRG-full classes, we can recover the entire SRG from a single $x,y$ pair.
	\begin{lemma}\label{lem:srg-full-class-and-quotient}
	Let $\cH$ be a real Hilbert space with $\dim \cH \ge 2$.
	For any distinct $x, y \in \cH$ and
	an SRG-full class $\cA$ consisting of single-valued operators,
	\[
		\srg{\cA/ \opI_{x,y}} = \srg{\cA}.
	\]
	\end{lemma}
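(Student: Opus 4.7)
The plan is to prove the two inclusions of $\srg{\cA/\opI_{x,y}} = \srg{\cA}$ separately. The direction $\srg{\cA/\opI_{x,y}} \subseteq \srg{\cA}$ is essentially immediate: for any $\opA \in \cA$ with $\opA \colon \cH \rightrightarrows \cH$, the quotient $\opA/\opI_{x,y} = \opA\opI_{x,y}^{-1}$ is the restriction of $\opA$ to $\{x, y\}$, so its SRG is built from $\opA x$ and $\opA y$ alone and is a subset of $\srg{\opA}$, hence of $\srg{\cA}$. Taking the union over $\opA \in \cA$ finishes this direction.

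For the substantive direction $\srg{\cA} \subseteq \srg{\cA/\opI_{x,y}}$, I would take an arbitrary $z \in \srg{\cA}$ and write $z = r e^{i\varphi}$ with $r \ge 0$ and $\varphi \in [0, \pi]$, using that $\srg{\cA}$ is closed under complex conjugation thanks to the $\pm$ in the SRG definition. The goal is to exhibit a single-valued operator $\opA' \in \cA$ defined on the given pair $\{x, y\}$ whose SRG there realizes $z$. The strategy is to construct a vector $w \in \cH$ with $\norm{w} = r\norm{x - y}$ and $\angle(w, x - y) = \varphi$, and then invoke Fact~\ref{fact:interpolable} with $(u, v) = (w, 0)$ to obtain $\opA' \in \cA$ satisfying $\opA' x = w$ and $\opA' y = 0$. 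A direct calculation using \eqref{eq:quotient-op-value} then gives $z \in \srg{\opA'/\opI_{x,y}} \subseteq \srg{\cA/\opI_{x,y}}$.

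The only nontrivial step, and the only place where the hypothesis $\dim \cH \ge 2$ actually enters, is the construction of $w$. Setting $e_1 = (x - y)/\norm{x - y}$ and choosing any unit vector $e_2 \in \cH$ orthogonal to $e_1$ (which exists precisely because $\dim \cH \ge 2$), I would take $w = r\norm{x - y}(\cos \varphi \cdot e_1 + \sin \varphi \cdot e_2)$; the required norm and angle are then immediate from orthonormality, and the degenerate cases $r = 0$ or $\varphi \in \{0, \pi\}$ are absorbed by the convention $\angle(0, \cdot) = 0$. The main (minor) obstacle is recognizing that Fact~\ref{fact:interpolable} is to be applied pointwise, to the \emph{specific} $(u, v, x, y)$ just constructed, which is clear from its one-line proof.
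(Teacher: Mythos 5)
Your proposal is correct and follows essentially the same route as the paper's proof: the easy inclusion via the fact that $\opA/\opI_{x,y}$ is the restriction of $\opA$ to $\{x,y\}$ (i.e., \eqref{eq:srg-is-union-of-sel-ops}), and the reverse inclusion by realizing a given $z\in\srg{\cA}$ with $\Im z\ge 0$ as $\frac{\norm{u-v}}{\norm{x-y}}\exp[i\angle(u-v,x-y)]$ using $\dim\cH\ge 2$ and then invoking Fact~\ref{fact:interpolable}, with conjugate symmetry handling $\Im z<0$. Your explicit construction of $w$ via an orthonormal pair merely fills in the existence step the paper states without detail.
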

	\begin{proof}
	For any $\opA \colon \cH \rightrightarrows \cH$ where $\opA \in \cA$ , we have $\srg{\opA / \opI_{x, y}} \subseteq \srg{\opA}$ from \eqref{eq:srg-is-union-of-sel-ops}, and $\srg{\cA / \opI_{x, y}} \subseteq \srg{\cA}$.
	
	We now prove $\srg{\cA} \subseteq \srg{\cA / \opI_{x, y}}$. Let $z_A \in \srg{\cA}$ with $\Im z_A \ge 0$. 
	Then, there exist $u, v \in \cH$ such that 
	\[
		z_A= \frac{\norm{u - v}}{\norm{x - y}}\exp[i\angle(u - v, x - y)],
	\]
	since $\dim \cH \ge 2$.
	By Fact~\ref{fact:interpolable}, there exists $\opA \in \cA$ such that $\opA \colon \cH \rightrightarrows \cH$ and $\opA x = u$, $\opA y = v$. Hence, 
	$z_A \in \srg{\opA / \opI_{x, y}} \subseteq \srg{\cA / \opI_{x, y}}$. Since SRGs are symmetric about the real axis, we can apply the same reasoning for $z_A \in \srg{\cA}$ with $\Im z_A < 0$.
	 Hence, $\srg{\cA} \subseteq \srg{\cA / \opI_{x, y}}$, and we conclude the proof.
	\end{proof}
	
	We now introduce some rules of calculus with the quotient operators.
	
	\begin{lemma}{(Calculus of quotient operators)}\label{lem:quotient-calculus}
	Let $\opA$, $\opB$, $\opC$, $\opD \colon \cH \rightrightarrows \cH$ be single-valued operators and distinct $x$, $y \in \cH$.
	Then the following identities hold:
	\begin{align}
		\srg{\opA / \opB_{x,y}} &= \srg{\opB / \opA_{x,y}}^{-1}  \label{prop1-1}\\
		\para{\opA + t\opB} / \opB_{x,y} &= \opA / \opB_{x,y} + t\opI   &(t \in \R) \label{prop1-2}\\
		\para{t\opA} / \opB_{x, y} &= t \para{\opA /\opB_{x, y}}   &(t \in \R)\label{prop1-3}\\
		\abs{\srg{\opA / \opC_{x,y}}} &=\abs{\srg{\opA / \opB_{x, y}}}\abs{\srg{\opB / \opC_{x,y}}} &(\opB x \neq \opB y, \opC x \neq \opC y)\label{prop1-4}\\
		\srg{(\opA + \opD) / \opC_{x, y}} &= \srg{\opA / \opC_{x, y}} &(\opD x = \opD y)\label{prop1-5}.
	\end{align}
	\end{lemma}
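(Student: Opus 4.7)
The plan is to prove each of the five identities by directly applying the closed-form expression \eqref{eq:quotient-op-value} for $\srg{\opA/\opB_{x,y}}$, together with the set-theoretic definitions of the operator constructions involved. For each identity, I would handle the generic case, in which all relevant pairs of outputs are distinct, and then check that the degenerate cases (when $\opA x = \opA y$, or $\opB x = \opB y$, etc.) reduce to the correct $0$, $\infty$, or $\emptyset$ on both sides.

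For \eqref{prop1-1}, the key observations are that $\angle(\cdot,\cdot)$ is symmetric in its two arguments and that the set inversion satisfies $(r e^{\pm i \theta})^{-1} = r^{-1} e^{\mp i \theta}$. Applying \eqref{eq:quotient-op-value} to both $\opA/\opB_{x,y}$ and $\opB/\opA_{x,y}$ and taking the inverse of the latter yields identical sets in the generic case; the degenerate cases pair up naturally as $0 \leftrightarrow \infty$ and $\emptyset \leftrightarrow \emptyset$ under the inverse defined in the preliminaries.

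For \eqref{prop1-2} and \eqref{prop1-3}, rather than computing SRGs, I would prove equality of the operators themselves. Since $\opB_{x,y}^{-1}$ has domain $\opB(\{x,y\} \cap \dom\opB)$, I can simply evaluate both sides at each point of this domain: the identities $(\opA + t\opB)p = \opA p + t\opB p$ and $(t\opA)p = t(\opA p)$ for $p \in \{x,y\}$ are immediate from the definitions of operator sum and scalar multiplication given in the preliminaries, and the two sides continue to agree even when $\opB x = \opB y$ (both become multi-valued in exactly the same way).

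For \eqref{prop1-4}, taking absolute values in \eqref{eq:quotient-op-value} collapses each SRG to the single real number $\|\opA x - \opA y\|/\|\opC x - \opC y\|$, resp. its analogues, and the three ratios telescope. The hypotheses $\opB x \ne \opB y$ and $\opC x \ne \opC y$ are precisely what rule out the problematic $\infty$ on the right-hand side, while the case $\opA x = \opA y$ gives $\{0\}$ on both sides. For \eqref{prop1-5}, the hypothesis $\opD x = \opD y$ yields $(\opA+\opD)x - (\opA+\opD)y = \opA x - \opA y$, so both the norm and the argument in \eqref{eq:quotient-op-value} are unchanged when $\opD$ is added, proving the two SRGs agree. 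The only nontrivial aspect throughout is the careful bookkeeping for the handful of degenerate cases in which some output set is empty or $\infty$; I do not expect any substantive difficulty beyond this.
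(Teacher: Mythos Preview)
Your proposal is correct and follows essentially the same approach as the paper: both use the explicit formula \eqref{eq:quotient-op-value} for the SRG identities \eqref{prop1-1}, \eqref{prop1-4}, \eqref{prop1-5}, prove the operator identities \eqref{prop1-2} and \eqref{prop1-3} by direct evaluation at the points of $\dom\opB_{x,y}^{-1}$, and handle the degenerate cases by the same $0\leftrightarrow\infty$ and $\emptyset$ bookkeeping.
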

	
	\begin{proof}
	In our proofs of \eqref{prop1-1}, \eqref{prop1-2}, and \eqref{prop1-3}, we assume $x ,y\in \dom\opA \cap \dom\opB$, as otherwise the statements are straightforwardly true due to \eqref{eq:quotient-op-value}. We assume similar conditions for \eqref{prop1-5} regarding domains of $\opA$ and $\opC$.
	
        \eqref{prop1-1} If $ \opA x \neq \opA y$ and $ \opB x \neq \opB y$, 
	\begin{align*}
			\srg{\opA / \opB_{x,y}} &= \frac{\norm{\opA x - \opA y}}{\norm{\opB x - \opB y}}\exp[\pm i \angle(\opA x - \opA y, \opB x - \opB y)]\\
			&= \pths{ \frac{\norm{\opB x - \opB y}}{\norm{\opA x - \opA y}} \exp[\pm i\angle(\opA x - \opA y, \opB x - \opB y)] }^{-1}\\ 
			&= \srg{\opB / \opA_{x,y}}^{-1}.
	\end{align*}
	If $ \opA x = \opA y$ and $ \opB x \neq \opB y$, then $\srg{\opA / \opB_{x,y}}=0=\infty^{-1}=\srg{\opB / \opA_{x,y}}^{-1}$.\\
	If $ \opA x \ne  \opA y$ and $ \opB x = \opB y$, then $\srg{\opA / \opB_{x,y}}=\infty=0^{-1}=\srg{\opB / \opA_{x,y}}^{-1}$.

	\eqref{prop1-2} Both sides evaluate to 
	$\opA x + t\opB x$ and $\opA y + t\opB y$ for inputs $\opB x$ and $\opB y$, respectively. Otherwise, both sides evaluate to $\emptyset$.
	
	\eqref{prop1-3} This follows immediately from our operator notation.
	
	\eqref{prop1-4} This follows from 
	\begin{align*}
		\abs{\srg{\opA / \opC_{x,y}}} 
		&= \frac{\norm{\opA x - \opA y}}{\norm{\opC x - \opC y}}\\
		&= \frac{\norm{\opA x - \opA y}}{\norm{\opB x - \opB y}}
		\frac{\norm{\opB x - \opB y}}{\norm{\opC x - \opC y}}\\
		&= \abs{\srg{\opA / \opB_{x,y}}} \abs{\srg{\opB / \opC_{x,y}}}.
	\end{align*}
	
	\eqref{prop1-5} Observe that 
	\[
	    (\opA + \opD)x - (\opA + \opD)y = \opA x - \opA y,
	\]
	which, together with \eqref{eq:quotient-op-value}, gives the desired result.
	\end{proof}

	\subsection{SRG product theorem}
	\label{ss:qo-properties}
	
	We first establish a product theorem for SRGs, which is of independent interest. This product theorem differs from  \cite[Theorem~7]{ryu_scaled_2021} in that one of operands  is a single fixed operator, rather than an operator class.
	We then apply it to the quotient operator.

	\begin{theorem}[SRG product theorem]\label{thm:split-srg}
	Let $\cH$ be a real Hilbert space with $\textrm{dim}\,\cH \ge 2$ and $\opB \colon \cH \rightrightarrows \cH$ be a single-valued operator.
        Let $\cA$ be a SRG-full class of single-valued operators satisfying an arc property, and assume $\srg{\cA} \ne \emptyset$.
        Then, 
	\[
		\srg{\cA\opB} = \srg{\cA}\srg{\opB}.
	\]
	(Note, $\infty\notin \cG(\cA)$ since operators in $\cA$ are single-valued.)
	\end{theorem}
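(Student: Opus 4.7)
The plan is to establish the two inclusions $\srg{\cA\opB} \subseteq \srg{\cA}\srg{\opB}$ and $\srg{\cA\opB} \supseteq \srg{\cA}\srg{\opB}$ separately. In both directions, the quotient-operator calculus of Lemma~\ref{lem:quotient-calculus} handles the moduli, the spherical triangle inequality (Fact~\ref{fact:sphere-tri-ineq}) controls the three relevant Hilbert-space angles, and either the arc property of $\cA$ (for the forward inclusion) or the interpolability statement (Fact~\ref{fact:interpolable}, for the reverse inclusion) closes the argument on the complex-plane side.

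For the ``$\subseteq$'' direction, we fix $\opA \in \cA$ and distinct $x, y \in \cH$. If $\opB x = \opB y$, the SRG entry of $\opA\opB$ at $(x, y)$ is $0$, which lies in $\srg{\cA}\srg{\opB}$ since $0 \in \srg{\opB}$ and $\srg{\cA} \neq \emptyset$. Otherwise identity~\eqref{prop1-4} yields
\[
  \bigl|\srg{(\opA\opB)/\opI_{x,y}}\bigr| = \bigl|\srg{\opA/\opI_{\opB x,\opB y}}\bigr| \cdot \bigl|\srg{\opB/\opI_{x,y}}\bigr|,
\]
so the modulus factors as $|z_A^0|\cdot|z_B^0|$ with $z_A^0 \in \srg{\cA}$ and $z_B^0 \in \srg{\opB}$. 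Writing $\alpha = \angle(\opA\opB x - \opA\opB y, \opB x - \opB y)$, $\beta = \angle(\opB x - \opB y, x - y)$, and $\gamma = \angle(\opA\opB x - \opA\opB y, x - y)$, Fact~\ref{fact:sphere-tri-ineq} gives $\gamma \in [|\alpha-\beta|, \alpha+\beta]$, whence $\psi := \gamma - \beta \in [-\alpha, \alpha]$. Assuming the right-arc property, $|z_A^0|e^{i\psi} \in \srg{\cA}$, and its product with $|z_B^0|e^{i\beta} \in \srg{\opB}$ reproduces $|z_A^0||z_B^0|e^{i\gamma}$; the conjugate entry is handled by complex-conjugate symmetry of SRGs, and the left-arc case reduces to the right-arc case by a sign reflection.

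For the ``$\supseteq$'' direction, we fix $z_A \in \srg{\cA}$ and $z_B \in \srg{\opB}$. Then $z_B$ is realized by some $x, y \in \cH$ with $x \neq y$. If $\opB x = \opB y$, then $z_B = 0$, and Fact~\ref{fact:single-pt-op} supplies a single-domain-point operator $\opA \in \cA$ yielding $z_A z_B = 0 \in \srg{\cA\opB}$. Otherwise we write $z_A = |z_A|e^{i\epsilon_A \alpha}$ and $z_B = |z_B|e^{i\epsilon_B \beta}$ with $\alpha, \beta \in [0,\pi]$ and $\epsilon_A, \epsilon_B \in \{-1, +1\}$, and use $\dim \cH \geq 2$ to place $u, v \in \cH$ in the two-dimensional plane spanned by $x - y$ and $\opB x - \opB y$ so that $\|u - v\|/\|\opB x - \opB y\| = |z_A|$, $\angle(u - v, \opB x - \opB y) = \alpha$, and $\angle(u - v, x - y)$ equals the Hilbert-space angle matching $|\arg(z_A z_B)| \in [0, \pi]$. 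Fact~\ref{fact:interpolable} then produces $\opA \in \cA$ with $\opA(\opB x) = u$ and $\opA(\opB y) = v$, so $\opA\opB$ has the required SRG entry at $(x, y)$.

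The main obstacle will be the bookkeeping of angles when $\alpha + \beta > \pi$: the complex argument of $z_A z_B$, after reduction modulo $2\pi$ into $(-\pi, \pi]$, has magnitude $2\pi - \alpha - \beta$ rather than $\alpha + \beta$, while the Hilbert-space angle $\angle(u - v, x - y) \in [0, \pi]$ must match this reduced value on the reverse direction. A case split over the four sign choices $(\epsilon_A, \epsilon_B) \in \{-1, +1\}^2$, matched to the three geometrically realizable values $\alpha + \beta$, $2\pi - \alpha - \beta$, and $|\alpha - \beta|$ for $\angle(u - v, x - y)$, together with the symmetry of SRGs about the real axis, resolves this and completes the proof.
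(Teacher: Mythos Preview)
Your proposal is correct and follows essentially the same two-inclusion strategy as the paper: for $\subseteq$ you use the spherical triangle inequality together with the arc property (exactly as the paper does), and for $\supseteq$ you construct $u,v$ in a two-dimensional subspace and invoke Fact~\ref{fact:interpolable}. The only notable difference is in the execution of the reverse inclusion: the paper fixes an orthonormal frame $e_1,e_2$ with $e_1=(x-y)/\|x-y\|$ and $\opB x-\opB y=\|x-y\|((\Re z_B)e_1+(\Im z_B)e_2)$, then simply sets $u-v=\|x-y\|((\Re(z_Az_B))e_1+(\Im(z_Az_B))e_2)$, so that the identification of the plane with $\complex$ forces both $\angle(u-v,x-y)=|\arg(z_Az_B)|$ and $\angle(u-v,\opB x-\opB y)=|\arg z_A|$ automatically, with no sign bookkeeping and no case split over $(\epsilon_A,\epsilon_B)$ or over $\alpha+\beta\lessgtr\pi$; your anticipated ``main obstacle'' thus evaporates with this coordinate choice, though your case-by-case route would also succeed.
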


	\begin{proof}
	We first show $\srg{\cA}\srg{\opB} \subseteq \srg{\cA\opB}$.
	Let $z_A\in \srg{\cA}$ and $z_B\in \srg{\opB}$.
	If $z_B=0$, there exists distinct $x, y\in \cH$ such that $\opB x=\opB y$.
        Take $\opA = \{(\opB x, 0)\}$. Then $\opA \in \cA$ by Fact~\ref{fact:single-pt-op}.
        This gives
        \[
            z_A z_B = 0 = \frac{\norm{\opA\opB x - \opA\opB y}}{\norm{x - y}}\exp[i\angle(\opA\opB x - \opA\opB y, x - y)] \in \srg{\opA\opB}.
        \]
        
	Otherwise, assume $z_B\ne 0$.
	Then there exists distinct $x,y\in \cH$ such that  $\opB x\ne \opB y$ and
	\[
	z_B \in \frac{\|\opB x-\opB y\|}{\|x-y\|}\exp\left[\pm i \angle (\opB x-\opB y,x-y)\right].
	\]
	There also exists orthonormal vectors $\{e_1,e_2\}\subset\cH$ such that 
    \begin{align*}
    e_1 &= (x - y) / \norm{x - y}\\
    \opB x - \opB y &= \norm{x - y}((\Re z_B) e_1 + (\Im z_B) e_2).
    \end{align*}
	With the $z_A\in \cG(\cA)$, define 
	\begin{align*}
		u &= \norm{x - y} ((\Re (z_A z_B)) e_1 + (\Im (z_A z_B)) e_2) \\
		v &= 0.
	\end{align*}
        
	The definitions of $u$ and $v$ imply
        \[
		z_A z_B \in \frac{\norm{u - v}}{\norm{x - y}}\exp[\pm i \angle(u - v, x - y)]
	\]
        and
        \[
            \frac{\norm{u - v}}{\norm{\opB x - \opB y}} \exp[i \angle (u - v, \opB x - \opB y)] \in \{z_A, \conj{z_A}\} \subseteq \srg{\cA}.
        \]
	By Fact~\ref{fact:interpolable}, there exist $\opA \in \cA$ such that $\opA \colon \cH \rightrightarrows \cH$ and 
	\[
	   \opA \opB x = u, \quad \opA \opB y = v.
	\]
        \begin{center}
		\def\scale{2}
		\begin{tikzpicture}[scale=\scale]
			\def\a{1.6};
			\def\b{0.8};
			\def\c{1};
			\def\t{30};
			\def\w{80};

			\coordinate (O) at (0, 0);
			\coordinate (C) at ({\c}, {0});
			\coordinate (B) at ({\b*cos(\t)}, {\b*cos(\t)});
			\coordinate (U1) at ({\a*\b*cos(\t+\w)}, {\a*\b*sin(\t+\w)});
			
			\filldraw (O) circle[radius={\ptsize / \scale pt}];

			\draw (O) node[below left=0pt] {$0$};
			\draw (B) node[below right=0pt] {$\opB x - \opB y$};
			\draw (C) node[below=0pt] {$x - y$};
			\draw (U1) node[left=0pt] {$u - v = \opA\opB x - \opA\opB y$};

			\draw [->] (O) -- (B);
			\draw [->] (O) -- (C);
			\draw [->] (O) -- (U1);

			\pic [draw, angle radius=\scale*0.4*\cminpt] {angle=B--O--U1};
			\pic [draw, angle radius=\scale*0.35*\cminpt] {angle=C--O--B};
			
                \pic [draw, angle radius=\scale*0.3*\cminpt] {angle=C--O--B};

                \draw ({0.45*cos(20)},{0.45*sin(20)}) node[right=-5pt] {$\arg z_B$};
			\draw [dashed,<-] (-1.5,0) -- (0,0);
			\draw [dashed,->] (C) -- (1.5,0);
			\draw (1.5,0) node[above=0pt] {$e_1$};
			\draw [dashed,<->] (0,-0.3) -- (0,1.5);
			\draw (0,1.5) node[above=0pt] {$e_2$};
			\draw [fill, white] ({0.6*cos(100)},{0.45*sin(100)}) rectangle ({0.3},{0.65*sin(100)});
			\draw ({0.5*cos(100)},{0.53*sin(100)}) node[right=-5pt] {$\arg z_A$};
		\end{tikzpicture}	
	\end{center}
	We conclude
	\begin{align*}
			z_A z_B 
			&\in \frac{\norm{\opA\opB x- \opA\opB y}}{\norm{x- y}}\exp[\pm i\angle(\opA\opB x- \opA\opB y, x- y))]\\
			&\subseteq  \cG(\opA\opB).
	\end{align*}
	

	Next, we show $\srg{\cA\opB} \subseteq \srg{\cA}\srg{\opB}$.
	Consider any $\opA \colon \cH \rightrightarrows \cH$ satisfying $\opA \in \cA$. We will show $\srg{\opA\opB} \subseteq \srg{\cA}\srg{\opB}$, 
	and, by \eqref{eq:srg-is-union-of-sel-ops}, this is equivalent to
	\[
	\srg{\opA\opB/\opI_{x, y}} \subseteq \srg{\cA}\srg{\opB}
	\]
	for all distinct $x,y \in \dom\opA\opB$.

	Let $x,y \in \dom\opA\opB$ be distinct.
    If $\opB x = \opB y$, then $0 \in \srg{\opB}$ and $\opA\opB x = \opA\opB y$. Thus,
        \[
            \srg{\opA\opB / \opI_{x, y}} = \{0\} \subseteq \srg{\cA}\srg{\opB}.
        \]
    Next, consider the case $\opB x \ne \opB y$.
    Define 
        \[
            z_{AB} = \srgval{\opA\opB}{\opI}{x}{y}
        \]
        so that $\srg{\opA\opB / \opI_{x, y}} = \{ z_{AB}, \conj{z_{AB}} \}$.
        Furthermore, we write
	\begin{align*}
		\varphi_B &= \angle(\opB x - \opB y, x - y), \\
		\varphi_{AB/B} &= \angle(\opA\opB x- \opA\opB y, \opB x- \opB y), \\
		z_B &= \frac{\norm{\opB x - \opB y}}{\norm{x - y}} \exp[i \varphi_B], \\
		z_{AB/B} &= \frac{\norm{\opA\opB x - \opA\opB y}}{\norm{\opB x - \opB y}} \exp[i \varphi_{AB/B}].
	\end{align*}
        Observe that $z_B$, $\conj{z_B} \in \srg{\opB}$ and $z_{AB/B} \in \srg{\opA} \subseteq \srg{\cA}$.

	First, suppose $\cA$ has the right-arc property.
	By Fact~\ref{fact:sphere-tri-ineq}, 
	\[
		\angle\left( \opA\opB x - \opA\opB y, x - y \right)
		\in \left[ \abs{\varphi_B - \varphi_{AB/B}}, \varphi_B + \varphi_{AB/B} \right].
	\]
	Therefore,
	\begin{align*}
		z_{AB} 
		&\in \frac{\norm{\opA\opB x-\opA\opB y}}{\norm{x- y}} \exp\left[ {i \left[ |\varphi_B-\varphi_{AB/B}|, \varphi_B+\varphi_{AB/B} \right]} \right]\\
		&= \frac{\norm{\opB x-\opB y}}{\norm{x - y}}\frac{\norm{\opA\opB x-\opA\opB y}}{\norm{\opB x- \opB y}} \exp\left[ {i \left[ |\varphi_B-\varphi_{AB/B}|, \varphi_B+\varphi_{AB/B} \right]} \right]\\
		& \subseteq \frac{\norm{\opB x-\opB y}}{\norm{x - y}}\frac{\norm{\opA\opB x-\opA\opB y}}{\norm{\opB x- \opB y}} \exp\left[ i \left[ \varphi_B-\varphi_{AB/B}, \varphi_B+\varphi_{AB/B} \right] \right]\\
		& = z_B \rarc(z_{AB/B}, \conj{z_{AB/B}})\\ 
		&\subseteq \srg{\opB}\srg{\cA}
	\end{align*}
	by the right-arc property of $\cA$.
	Furthermore, 
	\begin{align*}
	    \conj{z_{AB}} &\in \conj{z_B} \conj{\rarc(z_{AB/B}, \conj{z_{AB/B}})}\\
		&=\conj{z_B} \rarc(z_{AB/B}, \conj{z_{AB/B}}) \\
		&\subseteq \srg{\opB}\srg{\cA}.
	\end{align*} 
	Thus, $\srg{\opA\opB/\opI_{x, y}} = \{z_{AB}, \conj{z_{AB}}\} \subseteq \srg{\cA}\srg{\opB}$. 
	
	If  $\cA$ has the left-arc property, $-\cA$ has the right-arc property.
	As $-\opA \in -\cA$,
	\[
	    \srg{\opA\opB/\opI_{x, y}} = -\srg{-\opA\opB/\opI_{x, y}} \subseteq -\srg{-\cA}\srg{\opB} = \srg{\cA}\srg{\opB}
	\]
	by Fact~\ref{fact:srg-trans}.
	Therefore, by \eqref{eq:srg-is-union-of-sel-ops}, we conclude $\srg{\cA\opB} \subseteq \srg{\cA}\srg{\opB}$ when $\cA$ has an arc property.	
	\end{proof}

    
    \begin{corollary}\label{cor:split-quot-srg}
    Let $\cA$ and $\cH$ be as in Theorem~\ref{thm:split-srg}. 
    Let $\opB \colon \cH \rightrightarrows \cH$ and $\opC \colon \cH \rightrightarrows \cH$ be single-valued operators.
	For $\opC x \ne \opC y$,
	\[
	    \bigcup_{\substack{\opA \colon \cH \rightrightarrows \cH \\ \opA \in \cA}} \srg{\frac{\opA\opB}{\opC_{x, y}}} = \srg{\frac{\cA\opB}{\opC_{x, y}}} = \srg{\cA}\srg{\frac{\opB}{\opC_{x, y}}}
	\]
	holds.     
    \end{corollary}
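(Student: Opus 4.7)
The proof factors into two easy steps: the first equality is essentially a tautology from the definition of the SRG of a class, and the second reduces to the SRG product theorem (Theorem~\ref{thm:split-srg}) applied to a well-chosen auxiliary operator.

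For the first equality, by the calculus definitions given just before Lemma~\ref{lem:srg-full-class-and-quotient}, $\frac{\cA\opB}{\opC_{x,y}} = (\cA\opB)\opC_{x,y}^{-1}$ is the class consisting of the operators $\opA\opB\opC_{x,y}^{-1}$ as $\opA$ ranges over elements of $\cA$ with $\opA\colon\cH\rightrightarrows\cH$, and $\srg{\cdot}$ of a class is defined to be the union of SRGs of its members. Unfolding these definitions gives the first equality immediately.

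For the second equality, I would introduce the auxiliary operator $\opB' := \opB\opC_{x,y}^{-1} = \frac{\opB}{\opC_{x,y}}$. Because the assumption $\opC x \ne \opC y$ makes $\opC_{x,y}^{-1}$ a single-valued map on the two-point set $\{\opC x, \opC y\}$, and $\opB$ is single-valued by hypothesis, $\opB'$ is a single-valued operator $\cH \rightrightarrows \cH$. Hence Theorem~\ref{thm:split-srg} applies with $\opB$ there replaced by $\opB'$ here, yielding
\[
    \srg{\cA\opB'} = \srg{\cA}\,\srg{\opB'}.
\]
Using associativity of operator composition, $\cA\opB' = \cA(\opB\opC_{x,y}^{-1}) = (\cA\opB)\opC_{x,y}^{-1} = \frac{\cA\opB}{\opC_{x,y}}$, and $\srg{\opB'} = \srg{\frac{\opB}{\opC_{x,y}}}$ by construction. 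Substituting these identifications into the displayed equation gives the second equality.

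The only step that even requires verification is that the auxiliary $\opB'$ meets the single-valuedness hypothesis of Theorem~\ref{thm:split-srg}; this is where the hypothesis $\opC x \ne \opC y$ is essential, as it is what rules out $\opC_{x,y}^{-1}$ being multi-valued. The remaining hypotheses (SRG-fullness of $\cA$, its arc property, $\srg{\cA}\ne\emptyset$, and $\dim\cH\ge 2$) are inherited verbatim from the standing assumptions of the corollary, so no additional obstacle arises.
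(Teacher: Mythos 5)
Your proof is correct and matches the paper's own argument: the first equality is definitional, and the second follows from Theorem~\ref{thm:split-srg} applied with $\opB$ replaced by $\opB/\opC_{x,y}$, which is single-valued precisely because $\opC x \ne \opC y$. The paper states this in one line; your version merely spells out the same steps in more detail.
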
	
    
    \begin{proof}
        The first identity holds by definition, and the second identity follows from Theorem~\ref{thm:split-srg} with replacing
        $\opB$ in the theorem statement to $\opB / \opC_{x, y}$.
    \end{proof}
	

	
	\subsection{Proof of Theorem~\ref{thm:tight-dys-main}}
	\label{ss:pf-main-thm}

	We have one final preliminary lemma to establish.
	The proof of Theorem~\ref{thm:tight-dys-main} primarily focuses on operators on $\cH$ such that $\dim\cH\ge 2$ (including the case $\dim\cH=\infty$). The following lemma allows to contain the case $\dim \cH=1$.
	
	\begin{lemma}\label{lem:ignore-one-dim}
	Let $\cA, \cB, \cC$ be SRG-full classes and $\alpha,\lambda \in (0, \infty)$.
	Let $\cH$ be a real Hilbert space. Then,
	\[
		\bigcup_{\substack{
		\opA,\opB,\opC \colon \cH \rightrightarrows \cH\\
		\opA \in \cA,\, \opB \in \cB,\, \opC \in \cC}}
		\srg{\opT_{\opA, \opB, \opC, \alpha, \lambda}}
		\subseteq
		\bigcup_{\substack{
		\opA',\opB',\opC' \colon \cH \times \cH \rightrightarrows \cH \times \cH \\
		\opA' \in \cA,\, \opB' \in \cB,\, \opC' \in \cC}}
		\srg{\opT_{\opA', \opB', \opC', \alpha, \lambda}}.
	\]
	To clarify, $\cH \times \cH$ is a real Hilbert space with $\dim(\cH \times \cH) = 2\dim\cH$.
	\end{lemma}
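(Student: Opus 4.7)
The plan is to prove the inclusion by fixing any $\opA \in \cA$, $\opB \in \cB$, $\opC \in \cC$ on $\cH$ and constructing a corresponding triple $(\opA', \opB', \opC')$ on $\cH \times \cH$, still in the classes $(\cA, \cB, \cC)$, such that $\opT_{\opA', \opB', \opC', \alpha, \lambda}$ mirrors $\opT_{\opA, \opB, \opC, \alpha, \lambda}$ exactly along the embedded slice $\cH \times \{0\} \subset \cH \times \cH$. Since the embedding $z \mapsto (z, 0)$ is an isometry preserving both norms and angles, this immediately transports any SRG value realized by $(\opT_{\opA, \opB, \opC, \alpha, \lambda}, x, y)$ on the left-hand side into an SRG value of $\opT_{\opA', \opB', \opC', \alpha, \lambda}$ at the points $(x, 0)$, $(y, 0)$, yielding the desired inclusion.

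Concretely, I would define the lift of $\opA \colon \cH \rightrightarrows \cH$ via
\[
    \opA' = \{\,((z, 0), (u, 0)) \,|\, (z, u) \in \opA\,\},
\]
so that $\dom \opA' = \dom \opA \times \{0\}$, and likewise for $\opB'$ and $\opC'$. The first step is to check $\opA' \in \cA$ (and analogously for the other two). For any distinct $(z_1, 0), (z_2, 0) \in \dom \opA'$ with $(v_i, 0) \in \opA' (z_i, 0)$ and $v_i \in \opA z_i$, the isometric embedding ensures that the norm ratio $\|(v_1 - v_2, 0)\|/\|(z_1 - z_2, 0)\|$ and the angle $\angle((v_1 - v_2, 0), (z_1 - z_2, 0))$ coincide with those computed for $\opA$ at $z_1, z_2$. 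Hence $\srg{\opA'} \subseteq \srg{\opA} \subseteq \srg{\cA}$ (with $\infty$ carrying over if $\opA$ is multi-valued), and SRG-fullness then gives $\opA' \in \cA$. The edge case where $\dom \opA'$ is a singleton is handled by Fact~\ref{fact:single-pt-op}.

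The second step is to verify the DYS identity $\opT_{\opA', \opB', \opC', \alpha, \lambda}(x, 0) = (\opT_{\opA, \opB, \opC, \alpha, \lambda}\, x, 0)$ for every $x \in \dom \opT_{\opA, \opB, \opC, \alpha, \lambda}$. The key observation is that $(\opI + \alpha \opB')(z, 0) = (z + \alpha \opB z) \times \{0\}$, so $\resa{\opB'}(x, 0) = (\resa{\opB} x, 0)$ whenever the unprimed resolvent is defined; analogous identities hold for $\opC'$ and $\resa{\opA'}$. Chaining these through the DYS formula produces the stated identity, after which the SRG inclusion follows from the isometry of $z \mapsto (z, 0)$. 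The only place that requires care, and hence the main (but mild) obstacle, is establishing $\opA' \in \cA$ purely from SRG-fullness rather than from any concrete structural property such as monotonicity or Lipschitzness; this is precisely why one restricts rather than extends (an extension like tensoring with the identity on the second factor could easily generate SRG values outside $\srg{\cA}$, whereas restricting the graph can only shrink the SRG).
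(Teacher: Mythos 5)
Your proposal is correct and follows essentially the same route as the paper: the same graph-embedding $\opA \mapsto \{((z,0),(u,0)) \mid (z,u)\in\opA\}$, the same appeal to SRG-fullness (via the isometry of $z\mapsto(z,0)$) to keep the lifted operators in their classes, and the same verification that the embedding commutes with the resolvents and hence with the DYS formula. The only cosmetic difference is that the paper records the SRG of the lift as an equality rather than an inclusion, which changes nothing.
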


	\begin{proof}
		For an operator $\opA \colon \cH \rightrightarrows \cH$, define the embedding $\imath$ as
		\begin{align*}
		    \imath(\opA)&\colon \cH\times \cH \rightrightarrows \cH\times \cH\\
			\imath(\opA)&= \left\{ 
				\left((x, 0), (u, 0)\right) \,|\, (x, u) \in \opA
			\right\},
		\end{align*}
		where $0\in \cH$.
		Intuitively speaking, $\imath(\opA)$ is essentially the same operator as $\opA$, with $\mathrm{graph}(\opA)\subseteq\cH\times\cH$ embedded into $(\cH\times\{0\}) \times (\cH\times\{0\})$.
		One can observe that
	\begin{gather*}
			\imath(\opA + \opB) = \imath(\opA) + \imath(\opB),\quad
			\imath(\opA\opB) = \imath(\opA)\imath(\opB),
			\\
			\imath(\alpha\opA) = \alpha\imath(\opA),\quad
			\imath(\opI + \opA) = \opI + \imath(\opA),\quad
			\imath(\opA^{-1}) = \imath(\opA)^{-1},\\
			\imath(\resa{\opA}) = \resa{\imath(\opA)}, \quad \imath(\opT_{\opA, \opB, \opC, \alpha, \lambda})
			= \opT_{\imath(\opA), \imath(\opB), \imath(\opC), \alpha, \lambda}.
		\end{gather*}
            We omit the proof of the first five statements for conciseness, which can be done by a simple algebra.
            The last two expressions are consequences of the former statements. 
            In particular, observe that
            \[
                \imath(\resa{\opA}) = \imath((\opI + \alpha \opA)^{-1}) = \imath(\opI + \alpha \opA)^{-1}
                = (\opI + \alpha \imath(\opA))^{-1} = \resa{\imath(\opA)},
            \]
            and 
            \begin{align*}
                \imath(\opT_{\opA, \opB, \opC, \alpha, \lambda})
                &= \imath(\opI - \lambda \resa{\opB} + \lambda \resa{\opA} (2 \resa{\opB} - \opI - \alpha \opC \resa{\opB})) \\
                &= \opI - \lambda \imath( \resa{\opB} ) + \lambda \imath (\resa{\opA}) (2 \imath(\resa{\opB}) - \opI - \alpha \imath (\opC) \imath(\resa{\opB})) \\ 
                &= \opI - \lambda \resa{\imath (\opB)} + \lambda \resa{\imath (\opA)} (2 \resa{\imath (\opB)} - \opI - \alpha \imath (\opC) \resa{\imath (\opB)}) \\ 
                &= \opT_{\imath(\opA), \imath(\opB), \imath(\opC), \alpha, \lambda}.
            \end{align*}

        Furthermore, if $\opA \in \cA$, then
		\[
			\srg{\imath(\opA)} = \srg{\opA}\subseteq \srg{\cA}.
		\]
	    Therefore, if $\cA$ is an SRG-full class, then 
	    \[
	    \opA\in \cA\quad\Rightarrow\quad\imath(\opA) \in \cA.
	    \]
		The same argument holds for $\cB$ and $\cC$ as well. Therefore, 
		if $\opA \in \cA$, $\opB \in \cB$, and $\opC \in \cC$,
		\begin{align*}
			\srg{\opT_{\opA, \opB, \opC, \alpha, \lambda}}
			= \srg{\imath(\opT_{\opA, \opB, \opC, \alpha, \lambda})}
			= \srg{\opT_{\imath(\opA), \imath(\opB), \imath(\opC), \alpha, \lambda}}
		\end{align*}
		and $\imath(\opA) \in \cA$, $\imath(\opB) \in \cB$, and $\imath(\opC) \in \cC$,
		which concludes the proof.
	\end{proof}

\begin{proof}[Proof to Theorem~\ref{thm:tight-dys-main}]
		Recall that our goal is to show 
		\[
			\abs{\srg{\opT_{\cA, \cB, \cC, \alpha, \lambda}} - s}
			= \left| \cZ^{\text{DYS}}_{\cA, \cB, \cC, \alpha, \lambda} - s \right|
		\]
		given that $\cA$ and $\cB$ are SRG-full classes of monotone operators,
		$\opI + \alpha\cA$ has the right-arc property,
		and $\cC$ is an SRG-full class of 
		single-valued operators such that $(2 - \lambda(1-s)^{-1})\opI - \alpha \cC$ has an arc property.
		
		Then,
		\begin{align*}
		        \abs{\srg{\opT_{\cA, \cB, \cC, \alpha, \lambda}} - s}
			&\stackrel{\text{(a)}}{=} 
		    \bigcup_{\cH : \substack{\mathrm{Hilbert} \\ \mathrm{space}}}
		    \bigcup_{\substack{\opA, \opB, \opC \colon \cH \rightrightarrows \cH \\ \opA \in \cA,\, \opB \in \cB,\, \opC \in \cC}}
			\left| \srg{\opT_{\opA, \opB, \opC, \alpha, \lambda}} - s \right|\\
			&\stackrel{\text{(b)}}{=} 
			\bigcup_{\cH : \substack{\mathrm{Hilbert} \\ \mathrm{space}\\\dim \cH\ge 2}}
			\bigcup_{\substack{\opA, \opB, \opC \colon \cH \rightrightarrows \cH \\ \opA \in \cA, \, \opB \in \cB, \,\opC \in \cC}}
			\left| \srg{\opT_{\opA, \opB, \opC, \alpha, \lambda}} - s \right|\\
			&\stackrel{\text{(c)}}{=} 
			\bigcup_{\cH : \substack{\mathrm{Hilbert} \\ \mathrm{space}\\\dim \cH\ge 2}}
			\bigcup_{\substack{x, y \in \cH \\ x \ne y}}
			\bigcup_{\substack{\opA, \opB, \opC \colon \cH \rightrightarrows \cH \\ \opA \in \cA,\, \opB \in \cB,\, \opC \in \cC}}
			\abs{\srg{\opT_{\opA, \opB, \opC, \alpha, \lambda}/\opI_{x,y}} - s}\\
			&\stackrel{\text{(d)}}{=} 
			\bigcup_{\cH : \substack{\mathrm{Hilbert} \\ \mathrm{space}\\\dim \cH\ge 2}}
			\bigcup_{\substack{x, y \in \cH \\ x \ne y}}
			\left| \cZ^{\text{DYS}}_{\cA, \cB, \cC, \alpha, \lambda} - s \right|\\
			&\stackrel{\text{(e)}}{=} 
			\left| \cZ^{\text{DYS}}_{\cA, \cB, \cC, \alpha, \lambda} - s \right|,
		\end{align*}
		where (a) follows from definition of $\opT_{\cA, \cB, \cC, \alpha, \lambda}$,
		(b) follows from Lemma~\ref{lem:ignore-one-dim},
		(c) follows from \eqref{eq:srg-is-union-of-sel-ops}, 
		(d) will be established soon,
		and (e) follows from the fact that $\abs{\cZ^{\text{DYS}}_{\cA, \cB, \cC, \alpha, \lambda} - s}$ has no dependency on the bound union variables $\cH$, $x$, and $y$. We establish (d) by showing 
		\[
			\bigcup_{\substack{\opA, \opB, \opC \colon \cH \rightrightarrows \cH \\ \opA \in \cA,\, \opB \in \cB,\, \opC \in \cC}}
			\abs{\srg{\opT_{\opA, \opB, \opC, \alpha, \lambda}/\opI_{x,y}} - s}
				= \left| \cZ^{\text{DYS}}_{\cA, \cB, \cC, \alpha, \lambda} - s \right|	
		\]
		for any	Hilbert space $\cH$ such that $\dim \cH\ge 2$ and distinct $x,y \in \cH$.

		Now, let $s=1-\lambda t$.  Note that $s\ne 1$ implies $t\ne 0$ and we have assumed that $(2 - t^{-1})\opI - \alpha \cC$ has an arc property.
        The assumption that $\opI + \alpha\cA$ has the right-arc property implies that $\resa{\cA}$ has the right-arc property:        If $\opI + \alpha\cA$ satisfies the right-arc property and $z^{-1}\in \cG(\resa{\cA})$, then $\rarc(z, \conj{z})\subseteq \cG(\opI+\alpha\cA)$ and         $(\rarc(z, \conj{z}))^{-1} = \rarc\big( z^{-1}, \conj{z^{-1}} \big)\subseteq \cG(\resa{\cA}) = \cG((\opI + \alpha\cA)^{-1})$. So  $\resa{\cA}$ has the right-arc property.
	
	
	   For (d), we have
	    \begin{align*}
	    &\bigcup_{\substack{\opB \colon \cH \rightrightarrows \cH \\ \opB \in \cB}}\bigcup_{\substack{\opA, \opC \colon \cH \rightrightarrows \cH \\ \opA \in \cA, \,\opC \in \cC}}
			\left|\srg{\frac{\opT_{\opA, \opB, \opC, \alpha, \lambda}}{\opI_{x, y}}} - (1 - \lambda t)\right|
			\nonumber
			\\
			&\stackrel{\text{(i)}}{=} 
			\bigcup_{\substack{\opB \colon \cH \rightrightarrows \cH \\ \opB \in \cB}}
			\bigcup_{\substack{\opA, \opC \colon \cH \rightrightarrows \cH \\ \opA \in \cA, \,\opC \in \cC}}
			\lambda\babs{\srg{\frac{t\opI - \resa{\opB} + \resa{\opA} 
			(2\resa{\opB} - \opI - \alpha \opC \resa{\opB})}{\opI_{x,y}}}}
			\\
			&\stackrel{\text{(ii)}}{=}
			\bigcup_{\substack{\opB \colon \cH \rightrightarrows \cH \\ \opB \in \cB}}
			\bigcup_{z_B \in \srg{{\resa{\opB}}/{\opI_{x, y}}}}
			\lambda
			\left|
			t - z_B + \srg{\resa{\cA}}(2z_B - 1 - \alpha \srg{\cC} z_B)
			\right| 
			\\
			&\stackrel{\text{(iii)}}{=}
			\bigcup_{z_B \in \srg{\resa{\cB}}}
    			\lambda
    			\left|
    			t - z_B + \srg{\resa{\cA}}(2z_B - 1 - \alpha \srg{\cC} z_B)
    			\right|
			\\
			&\stackrel{\text{(iv)}}{=}  \abs{\cZ^{\text{DYS}}_{\cA, \cB, \cC, \alpha, \lambda} - (1 - \lambda t)}.
	    \end{align*}
	    Step (i) follows from
		\begin{align*}
			\left| \srg{\frac{\opT_{\opA, \opB, \opC, \alpha, \lambda}}{\opI_{x,y}}} - (1 - \lambda t) \right|\!\!\!\!\!\!\!\!\!\!\!\!\!\!\!\!\!\!\!\!\!\!\!\!&
			\\
			&= \babs{\srg{\frac{\lambda t\opI - \lambda \resa{\opB} + \lambda \resa{\opA} 
			(2\resa{\opB} - \opI - \alpha \opC \resa{\opB})}{\opI_{x,y}}}}
			\\
			&= \lambda\babs{\srg{\frac{t\opI - \resa{\opB} + \resa{\opA} 
			(2\resa{\opB} - \opI - \alpha \opC \resa{\opB})}{\opI_{x,y}}}}
		\end{align*}
		where we have used \eqref{prop1-2} and \eqref{prop1-3} of Lemma~\ref{lem:quotient-calculus} and Fact~\ref{fact:srg-trans}.
		We discuss (ii) soon.
		Step (iii) follows from
		\[
			\srg{\resa{\cB}} =
			\srg{\resa{\cB} / \opI_{x, y}} =
			\bigcup_{\substack{\opB \colon \cH \rightrightarrows \cH \\ \opB \in \cB}}\srg{{\resa{\opB}}/{\opI_{x, y}}},
		\]
		where we have used Lemma~\ref{lem:srg-full-class-and-quotient}.
		Step (iv) follows from the definition of $\cZ^{\text{DYS}}_{\cA, \cB, \cC, \alpha, \lambda}$.
		
		We now prove (ii) to conclude the proof. We do so by showing
		\begin{align*}
			&\!\!\!\!\!\!\!\!\!\!\!\!\!
			\bigcup_{\substack{\opA, \opC \colon \cH \rightrightarrows \cH \\ \opA \in \cA, \,\opC \in \cC}}
			\babs{\srg{\frac{t\opI - \resa{\opB} + \resa{\opA} 
			(2\resa{\opB} - \opI - \alpha \opC \resa{\opB})}{\opI_{x,y}}}}
			\\
			&
			= 
			\bigcup_{z_B \in \srg{{\resa{\opB}}/{\opI_{x, y}}}}
			\left|
			t - z_B + \srg{\resa{\cA}}(2z_B - 1 - \alpha \srg{\cC} z_B)
			\right|.
		\end{align*}
            If $x \notin \dom\resa{\opB}$ or $y \notin \dom\resa{\opB}$, the both sides equal $\emptyset$.
            We now consider two nontrivial cases.
		First, consider the case $(\resa{\opB}-t\opI)x \neq (\resa{\opB}-t\opI)y$.
		We have
		\begin{align*}
			&\bigcup_{\substack{\opA, \opC \colon \cH \rightrightarrows \cH \\ \opA \in \cA, \,\opC \in \cC}}
			\babs{\srg{\frac{t\opI - \resa{\opB} + \resa{\opA} 
			(2\resa{\opB} - \opI - \alpha \opC \resa{\opB})}{\opI_{x,y}}}} \\
			&\stackrel{(\diamondsuit)}{=}
			\bigcup_{\substack{\opC \colon \cH \rightrightarrows \cH \\ \opC \in \cC}}
			\bigcup_{\substack{\opA \colon \cH \rightrightarrows \cH \\ \opA \in \cA}}
			\babs{\srg{\frac{t\opI - \resa{\opB} +\resa{\opA} 
			(2\resa{\opB} - \opI - \alpha \opC \resa{\opB})}{(\resa{\opB} - t\opI)_{x, y}}}}
			\babs{\srg{\frac{\resa{\opB} - t\opI}{\opI_{x,y}}}}
			\\
			&=\bigcup_{\substack{\opC \colon \cH \rightrightarrows \cH \\ \opC \in \cC}}
			\bigcup_{\substack{\opA \colon \cH \rightrightarrows \cH \\ \opA \in \cA}}
			\babs{\srg{\frac{\resa{\opA} 
			(2\resa{\opB} - \opI - \alpha \opC \resa{\opB})}{(\resa{\opB} - t\opI)_{x, y}}} - 1}
			\babs{\srg{\frac{\resa{\opB} - t\opI}{\opI_{x,y}}}}
			\\
			&\stackrel{(\clubsuit)}{=}
			\bigcup_{\substack{\opC \colon \cH \rightrightarrows \cH \\ \opC \in \cC}}
			\babs{\srg{\resa{\cA}}\srg{\frac{2\resa{\opB} - \opI - \alpha \opC \resa{\opB}}{(\resa{\opB} - t\opI)_{x, y}}} - 1}
			\babs{\srg{\frac{\resa{\opB} - t\opI}{\opI_{x,y}}}}
			\\
			&\stackrel{(\diamondsuit)}{=}
			\bigcup_{\substack{\opC \colon \cH \rightrightarrows \cH \\ \opC \in \cC}}
			\babs{\srg{\resa{\cA}}
			\left[ \srg{
				\frac{
					\left( (2 - t^{-1})\opI - \alpha \opC \right) \resa{\opB}}{(\resa{\opB} - t\opI)_{x, y}}}  + t^{-1} \right]
			- 1}
			\babs{\srg{\frac{\resa{\opB} - t\opI}{\opI_{x,y}}}}
			\\
			&\stackrel{(\clubsuit)}{=}
			\left|\srg{\resa{\cA}}
			\left[
				\left(2-t^{-1} - \alpha\srg{\cC}\right)
				\srg{\frac{\resa{\opB}}{{(\resa{\opB} - t\opI)}_{x,y}}} + t^{-1}
			\right]
			-1 \right|
			\left| \srg{\frac{\resa{\opB}- t\opI}{\opI_{x,y}}} \right| \\
			&\stackrel{(\diamondsuit)}{=}
			\left|\srg{\resa{\cA}}
			\left[
				\left(2-t^{-1} - \alpha\srg{\cC}\right)
			\left(1 + t{\srg{\frac{\opI}{{(\resa{\opB} - t\opI)}_{x,y}}}}\right) + t^{-1}
			\right]
			-1 \right|
			\\&\quad \cdot \left| \srg{\frac{\resa{\opB}- t\opI}{\opI_{x,y}}} \right|
			\\
			&=
			\bigcup_{z_B \in \srg{{\resa{\opB}}/{\opI_{x, y}}}} 
			\left|
			\srg{\resa{\cA}}
			\left[
				\left(2-t^{-1} - \alpha\srg{\cC}\right)
			(1+t(z_B-t)^{-1})+t^{-1}
			\right] -1
			\right|
			\left|z_B - t\right| \\
			&= \bigcup_{z_B \in \srg{{\resa{\opB}}/{\opI_{x, y}}}} 
			\left|
			t - z_B + \srg{\resa{\cA}}(2z_B - 1 - \alpha \srg{\cC} z_B)
			\right|.
		\end{align*}
		where we used \eqref{prop1-2}, \eqref{prop1-3}, and \eqref{prop1-4} of Lemma~\ref{lem:quotient-calculus} in $(\diamondsuit)$'s and
		we used Corollary~\ref{cor:split-quot-srg} and Fact~\ref{fact:srg-trans} in $(\clubsuit)$'s together with the fact that $\resa{\cA}$ and $(2 - t^{-1})\opI - \alpha \cC$ has an arc property.
		The second step is induced by Fact~\ref{fact:srg-trans}. For the second to last step, we utilized \eqref{prop1-1}, \eqref{prop1-2}, Fact~\ref{fact:srg-trans}, and the fact that $\srg{\resa{\opB}/\opI_{x, y}}$ is a set of either one real number or one complex conjugate pair. The last step follows from direct calculations.
		
		Now, consider the case $(\resa{\opB} - t\opI)x= (\resa{\opB} - t\opI)y$.
		In this case, we have
  
		\begingroup
		\allowdisplaybreaks
		\begin{align*}
			&
			\bigcup_{\substack{\opA, \opC \colon \cH \rightrightarrows \cH \\ \opA \in \cA, \,\opC \in \cC}}
			\left|
			\srg{\frac{
			t\opI - \resa{\opB} +
			\resa{\opA}(2\resa{\opB} - \opI - \alpha \opC \resa{\opB})}{\opI_{x,y}}}
			\right| 
			\\
			&\stackrel{(\diamondsuit)}{=}
			\bigcup_{\substack{\opC \colon \cH \rightrightarrows \cH \\ \opC \in \cC}}
			\bigcup_{\substack{\opA \colon \cH \rightrightarrows \cH \\ \opA \in \cA}}
			\left|
			\srg{\frac{\resa{\opA}(2\resa{\opB} - \opI - \alpha \opC \resa{\opB})}{\opI_{x,y}}}
			\right|
			\\
			&\stackrel{(\clubsuit)}{=}
			\bigcup_{\substack{\opC \colon \cH \rightrightarrows \cH \\ \opC \in \cC}}
			\left|
			\srg{\resa{\cA}}
			\srg{\frac{2\resa{\opB} - \opI - \alpha \opC \resa{\opB}}{\opI_{x,y}}}
			\right|
			\\
			&\stackrel{(\diamondsuit)}{=}
			\bigcup_{\substack{\opC \colon \cH \rightrightarrows \cH \\ \opC \in \cC}}
			\left|
			\srg{\resa{\cA}}
			\srg{\frac{((2 - t^{-1})\opI - \alpha \opC)\resa{\opB}}{\opI_{x,y}}}
			\right|
			\\
			&\stackrel{(\clubsuit)}{=}
			\left|
			\srg{\resa{\cA}}((2 - t^{-1})\opI - \alpha \srg{\cC})\srg{\resa{\opB}/\opI_{x,y}}
			\right|
			\\
			&=
			\bigcup_{z_B \in \srg{{\resa{\opB}}/{\opI_{x, y}}}} 
			\left|
			t - z_B + \srg{\resa{\cA}}(2z_B - 1 - \alpha \srg{\cC} z_B)
			\right|,
		\end{align*}
		\endgroup
  where we used \eqref{prop1-5} of Lemma~\ref{lem:quotient-calculus} in $(\diamondsuit)$'s and Corollary~\ref{cor:split-quot-srg} in $(\clubsuit)$'s.
		The last step follows from $\srg{\resa{\opB} / \opI_{x, y}} = \{t\}$.
 \end{proof}

\section{Contraction factors of DYS}
	\label{s:dys_cont}
	Using the SRG theory of Section~\ref{s:srg_theory}, we prove new contraction factors of DYS in three settings of interest. The three contraction factors of Theorems~\ref{thm:lipschitz-dys-a-lip-sm}, \ref{thm:lipschitz-dys-a-lip-b-sm}, and \ref{thm:lipschitz-dys-a-lip-c-sm}
	are strictly smaller (better)%
 than the six contraction factors of \cite[Theorem~D.6]{davis2015three} and are, to the best of our knowledge, the best known rates.

	
	\begin{theorem}\label{thm:lipschitz-dys-a-lip-sm}
	    Let $0<\alpha < 4\beta_\cC<\infty$ and $\lambda \in \left( 0, 2 - \frac{\alpha}{2\beta_\cC}\right)$.
	    Let $\cH$ be a real Hilbert space.
	    Let $\opA\colon \cH \rightrightarrows \cH$ and $\opB\colon \cH \rightrightarrows \cH$ be monotone and $\opC \colon \cH \rightrightarrows \cH$ be $\beta_\cC$-cocoercive.
	    If $\opA$ is $\mu_\cA$-strongly monotone and $L_\cA$-Lipschitz for 
		$0 < \mu_\cA \le L_\cA < \infty$,
	    then $\opT_{\opA, \opB, \opC, \alpha, \lambda}$ is $\rho$-contractive with
	    \[
				\rho = 1 - \frac{2\lambda}{4 - \alpha / \beta_\cC} + \lambda \sqrt{ \frac{2}{4 - \alpha / \beta_\cC}  \left( \frac{2}{4 - \alpha / \beta_\cC} - \frac{2 \alpha \mu_\cA}{\alpha^2 L_\cA^2 + 2 \alpha \mu_\cA + 1} \right)}.
		\]
		Symmetrically, if $\opB$ is $\mu_\cB$-strongly monotone and $L_\cB$-Lipschitz for 
		$0 < \mu_\cB \le L_\cB < \infty$, then
		$\opT_{\opA, \opB, \opC, \alpha, \lambda}$ is $\rho$-contractive with
		\[
				\rho = 1 - \frac{2\lambda}{4 - \alpha / \beta_\cC} + \lambda \sqrt{ \frac{2}{4 - \alpha / \beta_\cC}  \left( \frac{2}{4 - \alpha / \beta_\cC} - \frac{2 \alpha \mu_\cB}{\alpha^2 L_\cB^2 + 2 \alpha \mu_\cB + 1} \right)}.
		\]
	\end{theorem}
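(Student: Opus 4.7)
My plan is to invoke Corollary~\ref{cor:tight-coeff-dys} with $\cA = \sm{\mu_\cA}\cap\cL_{L_\cA}$, $\cB = \cM$, and $\cC = \coco{\beta_\cC}$, thereby reducing the task to bounding
\[
    \sup\left\{\left|\zeta_{\text{DYS}}(z_A,z_B,z_C;\alpha,\lambda)\right| : z_A\in\srg{\resa{\cA}},\ z_B\in\srg{\resa{\cB}},\ z_C\in\srg{\cC}\right\}.
\]
The hypotheses of the corollary are easily verified: $\srg{\opI + \alpha\cA} = \Di(1,\alpha L_\cA)\cap\{\Re z \ge 1+\alpha\mu_\cA\}$ is an intersection of a disk and a half-plane both symmetric about the real axis, hence has the right-arc property, while $\srg{(2-\lambda)\opI-\alpha\cC}$ is a disk centered on the real axis, hence has an arc property.

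Next I apply Lemma~\ref{lem:max-mod}, valid since $\zeta_{\text{DYS}}$ is polynomial in its three arguments, to restrict the supremum to the boundary circles $\partial\srg{\resa{\cA}}$, $\Ci(1/2,1/2)$, and $\Ci(1/(2\beta_\cC),1/(2\beta_\cC))$. Parametrizing $z_B = (1+e^{i\theta_B})/2$, I rewrite $\zeta_{\text{DYS}} = P + Qe^{i\theta_B}$ with $P = (1-\lambda/2) - (\lambda/2)\alpha z_Az_C$ and $Q = (\lambda/2)\bigl((2-\alpha z_C)z_A - 1\bigr)$, so that the maximization over $\theta_B$ reduces to $|P|+|Q|$. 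The maximization over $z_C$ on its boundary circle is tractable because the $z_C$-dependent pieces of $P$ and $Q$ coincide (both equal $-(\lambda/2)\alpha z_A z_C$), so $P$ and $Q$ trace circles of identical radius $\lambda\alpha|z_A|/(4\beta_\cC)$ whose centers differ by the constant $1-\lambda z_A$. The final maximization is over $z_A$ on $\partial\srg{\resa{\cA}}$, a union of two arcs, namely the images under $w\mapsto 1/w$ of the Lipschitz circle $\Ci(1,\alpha L_\cA)$ and of the strong-monotonicity line $\Re w = 1+\alpha\mu_\cA$, meeting at the conjugate pair $z_A^{\pm} = 1/(1+\alpha\mu_\cA \pm i\alpha\sqrt{L_\cA^2-\mu_\cA^2})$.

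The principal obstacle is this last joint maximization over $z_A$ and $z_C$, including a case analysis for which of the two arcs constituting $\partial\srg{\resa{\cA}}$ hosts the extremum. I expect the extremum to occur at the corners $z_A^{\pm}$, where direct computation yields $|z_A|^2 = 1/(1+2\alpha\mu_\cA+\alpha^2L_\cA^2)$ and $2\Re z_A - 2|z_A|^2 = \tau$, with $\tau = 2\alpha\mu_\cA/(1+2\alpha\mu_\cA+\alpha^2L_\cA^2)$ matching the quantity appearing in the statement. Introducing $\gamma = 2/(4-\alpha/\beta_\cC) = 1/(2-\alpha/(2\beta_\cC))$, the reciprocal of the upper bound on $\lambda$, and simplifying should produce $\rho = 1-\lambda\gamma+\lambda\sqrt{\gamma(\gamma-\tau)}$. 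The symmetric assertion of the theorem follows immediately: since $\zeta_{\text{DYS}}$ is symmetric in $z_A$ and $z_B$ and the roles of $\cA$ and $\cB$ have merely been swapped, the supremum is unchanged, yielding the same formula with $L_\cA,\mu_\cA$ replaced by $L_\cB,\mu_\cB$.
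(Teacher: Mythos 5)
Your setup is identical to the paper's: the same classes $\cA = \lip{L_\cA}\cap\sm{\mu_\cA}$, $\cB=\cM$, $\cC=\coco{\beta_\cC}$, the same reduction via Corollary~\ref{cor:tight-coeff-dys} and Lemma~\ref{lem:max-mod} to bounding $\abs{\zeta_{\text{DYS}}}$ on the boundary circles, and a correct verification of the arc-property hypotheses. The divergence, and the gap, is in the core estimate. After writing $\zeta_{\text{DYS}} = P + Qe^{i\theta_B}$ (your $P$ and $Q$ are correct, and $\sup_{\theta_B} = \abs{P}+\abs{Q}$ is fine), you are left maximizing $\abs{P}+\abs{Q}$ jointly over $z_C$ on its circle and $z_A$ on its two arcs. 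The $z_C$-step is not "tractable" in the way you suggest: since $P$ and $Q$ share the same moving part, you must maximize $\abs{a+w}+\abs{b+w}$ over $w$ on a circle, i.e.\ a sum of distances from a point on a circle to two fixed points, whose maximizer has no closed form in general and certainly is not located at an obvious point. You never resolve this, and the subsequent claim that the $z_A$-extremum sits at the corners $z_A^{\pm}$ is only "expected", not argued. Finally, your assertion that the exact supremum "should produce" the stated $\rho$ is almost certainly false: the paper's $\rho$ is obtained through two lossy inequalities (a triangle inequality absorbing $z_C$ and a Cauchy--Schwarz step decoupling $z_A$ from $z_B$), and the paper explicitly notes in its conclusion that these factors strictly exceed the true supremum. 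An exact computation would therefore yield a smaller number that you would still have to compare to $\rho$ --- which you have not done.

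The idea you are missing is the paper's shift. Setting $\theta = \tfrac{2}{4-\alpha/\beta_\cC}$, one has the identity
\[
\zeta_{\text{DYS}}(z_A,z_B,z_C;\alpha,\lambda) - (1-\lambda\theta)
= \lambda\left[\theta^{-1}(z_A-\theta)(z_B-\theta) + \left(2-\theta^{-1}-\alpha z_C\right)z_Az_B\right],
\]
and $\theta$ is chosen precisely so that $\abs{2-\theta^{-1}-\alpha z_C} = 2-\theta^{-1}$ is \emph{constant} on $\partial\srg{\cC}$, which eliminates $z_C$ in one stroke via the triangle inequality. Lemma~\ref{lem:CS} then splits the remaining expression into a product of a $z_A$-factor and a $z_B$-factor; the $z_B$-factor is identically $\theta$ on $\Ci(1/2,1/2)$, and the $z_A$-factor reduces to maximizing $\abs{z_A-\tfrac12}$ over the two arcs, which Lemma~\ref{lem:distance-to-circ} localizes at the corner points, giving exactly the $\tau$ you computed. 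Without this shift (or some substitute that decouples the three variables), your sequential-maximization plan stalls at a genuinely hard two-variable optimization.
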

	
	\begin{theorem}\label{thm:lipschitz-dys-a-lip-b-sm}
	    Let $0<\alpha < 4\beta_\cC<\infty$ and $\lambda \in \left( 0, 2 - \frac{\alpha}{2\beta_\cC}\right)$.
	    Let $\cH$ be a real Hilbert space.
	    Let $\opA\colon \cH \rightrightarrows \cH$ and $\opB\colon \cH \rightrightarrows \cH$ be monotone and $\opC \colon \cH \rightrightarrows \cH$ be $\beta_\cC$-cocoercive.
	    If $\opA$ is $L_\cA$-Lipschitz and $\opB$ is $\mu_\cB$-strongly monotone for $L_\cA,\mu_\cB\in (0,\infty)$, then 
	    $\opT_{\opA, \opB, \opC, \alpha, \lambda}$ is $\rho$-contractive with 
	    \[
			\rho = 
			\sqrt{1 - \min\left\{ 
				\frac{2 \alpha \mu_{\cB}\lambda(2-\lambda)}{(1+\alpha^2L_{\cA}^2)(2-\lambda+2\alpha \mu_{\cB})},
				\frac{2\alpha \lambda (( 2 - \lambda)(\mu_\cB+L_\cA) + 2\alpha \mu_\cB L_\cA) }
				{(1 + \alpha L_\cA)^2(2 - \lambda + 2 \alpha \mu_\cB)}
			\right\}}
			.
		\]
		Symmetrically, if $\opA$ is $\mu_\cA$-strongly monotone and $\opB$ is $L_\cB$-Lipschitz for $\mu_\cA,L_\cB\in (0,\infty)$, then $\opT_{\opA, \opB, \opC, \alpha, \lambda}$ is $\rho$-contractive with 
	    \[
			\rho = 
			\sqrt{1 - \min\left\{ 
				\frac{2 \alpha \mu_{\cA}\lambda(2-\lambda)}{(1+\alpha^2L_{\cB}^2)(2-\lambda+2\alpha \mu_{\cA})},
				\frac{2\alpha\lambda ( ( 2 - \lambda)(\mu_\cA+L_\cB) + 2\alpha\mu_\cA L_\cB) }
				{(1 + \alpha L_\cB)^2(2 - \lambda + 2 \alpha \mu_\cA)}
			\right\}}
			.
		\]
	\end{theorem}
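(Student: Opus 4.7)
The plan is to invoke Corollary~\ref{cor:tight-coeff-dys}, which identifies the contraction factor of $\opT_{\opA,\opB,\opC,\alpha,\lambda}$ with $\sup_{z\in \cZ^{\text{DYS}}_{\cA,\cB,\cC,\alpha,\lambda}}|z|$ for $\cA=\lip{L_\cA}\cap\cM$, $\cB=\sm{\mu_\cB}$, and $\cC=\coco{\beta_\cC}$, and then to evaluate this maximum modulus explicitly. The corollary's hypotheses are met: since $\srg{\cC}=\Di(1/(2\beta_\cC),1/(2\beta_\cC))$ is a disk centered on the real axis, $\srg{(2-\lambda)\opI-\alpha\cC}$ is likewise a disk centered on the real axis and thus satisfies an arc property; and $\srg{\opI+\alpha\cA}$ is the right half of $\Di(1,\alpha L_\cA)$, which satisfies the right-arc property.

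By Lemma~\ref{lem:max-mod}, it suffices to maximize
\[
|\zeta_{\text{DYS}}(z_A,z_B,z_C;\alpha,\lambda)|^2=\bigl|1-\lambda z_A-\lambda z_B+\lambda(2-\alpha z_C)z_Az_B\bigr|^2
\]
over $\partial\srg{\resa{\cA}}\times\partial\srg{\resa{\cB}}\times\partial\srg{\cC}$. Here $\partial\srg{\resa{\cB}}=\Ci(1/(2(1+\alpha\mu_\cB)),1/(2(1+\alpha\mu_\cB)))$ and $\partial\srg{\cC}=\Ci(1/(2\beta_\cC),1/(2\beta_\cC))$ are standard circles tangent to the imaginary axis at $0$, while $\partial\srg{\resa{\cA}}$ is the image under $w\mapsto 1/w$ of the boundary of the right half of $\Di(1,\alpha L_\cA)$. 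The latter decomposes into the inversion of the semicircle $\{1+\alpha L_\cA e^{i\theta}:\theta\in[-\pi/2,\pi/2]\}$ and the inversion of the vertical diameter $\{1+it:t\in[-\alpha L_\cA,\alpha L_\cA]\}$, the two pieces meeting at the conjugate pair $(1\mp i\alpha L_\cA)/(1+\alpha^2L_\cA^2)$, with leftmost real point $1/(1+\alpha L_\cA)$.

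I would exploit that $\zeta_{\text{DYS}}$ is separately linear in $z_B$ and in $z_C$, which reduces the inner maximizations over the two circles to closed-form expressions in $z_A$. The outer optimization over $z_A\in\partial\srg{\resa{\cA}}$ then splits according to which of the two pieces contains $z_A$: I expect the inverted-arc piece to attain its maximum at the leftmost point $z_A=1/(1+\alpha L_\cA)$, producing the candidate with denominator $(1+\alpha L_\cA)^2$, while the inverted-segment piece attains its maximum at the conjugate corners $z_A=(1\mp i\alpha L_\cA)/(1+\alpha^2L_\cA^2)$, producing the candidate with denominator $1+\alpha^2L_\cA^2$. Taking the larger of the two resulting values yields $\rho^2=\max(1-A,1-B)=1-\min(A,B)$ in the form stated. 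The symmetric claim (roles of $\opA$ and $\opB$ exchanged) follows immediately from the $(z_A,z_B)$-symmetry of $\zeta_{\text{DYS}}$.

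The main obstacle will be the case analysis and algebra on $\partial\srg{\resa{\cA}}$: unlike the two circular boundaries, this curve is not a single standard circle but the union of two inverted arcs, so each piece must be parametrized separately, the candidate maximizer on each piece verified, and the resulting expressions combined and simplified into the compact closed form of the statement.
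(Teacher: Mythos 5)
Your setup matches the paper's: you invoke Corollary~\ref{cor:tight-coeff-dys} with $\cA=\lip{L_\cA}\cap\cM$, $\cB=\sm{\mu_\cB}$, $\cC=\coco{\beta_\cC}$, reduce to the boundaries via Lemma~\ref{lem:max-mod}, describe $\partial\srg{\resa{\cA}}$ correctly, and identify the correct final candidates $z_A=\frac{1}{1+\alpha L_\cA}$ and $z_A=\frac{1}{1+\alpha^2L_\cA^2}(1\pm i\alpha L_\cA)$. The gap is in the middle step. You claim that separate linearity of $\zeta_{\text{DYS}}$ in $z_B$ and $z_C$ ``reduces the inner maximizations over the two circles to closed-form expressions in $z_A$.'' It does not: after you maximize exactly over one variable, say $z_C$ (an affine function $a+bz_C$ has maximum modulus $\abs{a+bc}+\abs{b}\rho$ over $\Ci(c,\rho)$), the remaining function of $z_B$ is the sum of the modulus of an affine function of $z_B$ and a term proportional to $\abs{z_B}$. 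This is no longer affine, the two summands are maximized at different points of $\Ci\bigl(\frac{1}{2(1+\alpha\mu_\cB)},\frac{1}{2(1+\alpha\mu_\cB)}\bigr)$, and there is no usable closed form; the same obstruction appears if you order the maximizations the other way. Even if the exact supremum were computable, it would not in general equal the stated $\rho$, which is a deliberately relaxed upper bound.

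The paper gets around this with two devices your plan omits. First, it bounds $\abs{2-\lambda-\alpha z_C}\le 2-\lambda$ for $\alpha z_C\in\Ci\bigl(\frac{\alpha}{2\beta_\cC},\frac{\alpha}{2\beta_\cC}\bigr)$ using Lemma~\ref{lem:distance-to-circ}; this is exactly where the hypothesis $\lambda<2-\frac{\alpha}{2\beta_\cC}$ enters (the point $2-\lambda$ must lie to the right of the circle's center), and your proposal never uses that hypothesis. Second, and crucially, it decouples $z_A$ from $z_B$ via the weighted Cauchy--Schwarz inequality of Lemma~\ref{lem:CS},
\[
\bigl(\abs{1-\lambda z_A}\abs{1-\lambda z_B}+\lambda(2-\lambda)\abs{z_A}\abs{z_B}\bigr)^2
\le\bigl(\abs{1-\lambda z_A}^2+r^{-1}\lambda(2-\lambda)\abs{z_A}^2\bigr)\bigl(\abs{1-\lambda z_B}^2+r\lambda(2-\lambda)\abs{z_B}^2\bigr),
\]
with the free weight chosen as $r=\frac{2-\lambda+2\alpha\mu_\cB}{2-\lambda}$ precisely so that the $z_B$-factor is identically $1$ on $\partial\srg{\resa{\cB}}$. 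Only after this collapse does the problem become the one-variable maximization of $\bigl|z_A-\frac{1}{\lambda+(2-\lambda)r^{-1}}\bigr|$ over $\partial\srg{\resa{\cA}}$, where your geometric case analysis (justified by Lemma~\ref{lem:distance-to-circ}, not just by inspection) takes over and yields the two candidates and hence $\rho^2=1-\min\{\cdot,\cdot\}$. Without these two relaxations your plan stalls at the joint $(z_B,z_C)$ optimization.
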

	
	\begin{theorem}\label{thm:lipschitz-dys-a-lip-c-sm}
	    Let $0<\alpha < 4\beta_\cC<\infty$ and $\lambda \in \left( 0, 2 - \frac{\alpha}{2\beta_\cC}\right)$.
	    Let $\cH$ be a real Hilbert space.
	    Let $\opA\colon \cH \rightrightarrows \cH$ and $\opB\colon \cH \rightrightarrows \cH$ be monotone and $\opC \colon \cH \rightrightarrows \cH$ be $\beta_\cC$-cocoercive.
	    If $\opA$ is $L_\cA$-Lipschitz and $\opC$ is $\mu_\cC$-strongly monotone for $L_\cA\in (0,\infty)$ and $0 < \mu_\cC \le \frac{1}{\beta_\cC}$, then
	    $\opT_{\opA, \opB, \opC, \alpha, \lambda}$ is $\rho$-contractive with
	    \[
			\rho = 
			\sqrt{1 - 2\lambda\alpha\min\left\{ 
				\frac{L_\cA + \mu_\cC \left( 1 - \frac{\alpha}{2\beta_\cC(2-\lambda)} \right)}{(1 + \alpha L_\cA)^2}, 
				\frac{ \mu_\cC \left( 1 - \frac{\alpha}{2\beta_\cC(2-\lambda)} \right)}{1 + \alpha^2 L_\cA^2}
			\right\}}
			.
		\]
		Symmetrically, if $\opB$ is $L_\cB$-Lipschitz and $\opC$ is $\mu_\cC$-strongly monotone for $L_\cB\in (0,\infty)$ and $0 < \mu_\cC \le \frac{1}{\beta_\cC}$, then
	    $\opT_{\opA, \opB, \opC, \alpha, \lambda}$ is $\rho$-contractive with 
	    \[
			\rho = 
			\sqrt{1 - 2\lambda\alpha\min\left\{ 
				\frac{L_\cB + \mu_\cC \left( 1 - \frac{\alpha}{2\beta_\cC(2-\lambda)} \right)}{(1 + \alpha L_\cB)^2}, 
				\frac{ \mu_\cC \left( 1 - \frac{\alpha}{2\beta_\cC(2-\lambda)} \right)}{1 + \alpha^2 L_\cB^2}
			\right\}}
			.
		\]
	\end{theorem}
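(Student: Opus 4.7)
The plan is to apply Corollary~\ref{cor:tight-coeff-dys-completion}, since the hypothesis $\cC = \coco{\beta_\cC} \cap \sm{\mu_\cC}$ makes $\srg{(2-\lambda)\opI - \alpha\cC}$ a lens-shaped region (a disk intersected with a half-plane) that does not in general satisfy an arc property. I first identify an SRG-full enlargement $\cC' \supseteq \cC$ of single-valued operators so that $(2-\lambda)\opI - \alpha\cC'$ does have an arc property. In the spirit of Section~\ref{ss:non-tight}, a natural candidate is to take $\srg{\cC'}$ as the smallest superset of $\srg{\cC}$ whose reflection-translate $(2-\lambda) - \alpha\srg{\cC'}$ is arc-closed, i.e., replace the straight chord of the lens by a circular arc through the origin. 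This produces a class of the form $\cC' = \coco{\beta_\cC} \cap (\opI + \lip{L'})$ with $L'$ depending explicitly on $\alpha$, $\beta_\cC$, $\mu_\cC$, and $\lambda$; the factor $\alpha/(2\beta_\cC(2-\lambda))$ appearing in the theorem originates from this construction.

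With $\cC'$ in hand, Corollary~\ref{cor:tight-coeff-dys-completion} and Lemma~\ref{lem:max-mod} reduce the task to bounding $|\zeta_{\text{DYS}}(z_A, z_B, z_C; \alpha, \lambda)|$ over $z_A \in \partial \srg{\resa{\cA}}$, $z_B \in \partial \srg{\resa{\cB}}$, and $z_C \in \partial \srg{\cC'}$. I then characterize each SRG explicitly: $\srg{\resa{\cB}} = \Di(1/2,1/2)$ because $\cB = \cM$; $\srg{\opI + \alpha\cA}$ is the right half of the disk $\Di(1,\alpha L_\cA)$, so by Fact~\ref{fact:srg-trans} $\srg{\resa{\cA}}$ is its inversion; and $\srg{\cC'}$ is given by the enlargement above. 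On each boundary curve, the defining equation provides a helpful identity (for example, $z\bar z = \Re z$ on $\partial \Di(1/2,1/2)$) that eliminates one degree of freedom in the modulus calculation.

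The key step is to expand
\[
|\zeta_{\text{DYS}}(z_A, z_B, z_C; \alpha, \lambda)|^2 = 1 - 2\lambda\alpha\, \Phi(z_A, z_B, z_C)
\]
for a real-valued $\Phi$, and then lower-bound $\Phi$ on the three boundary curves. The $\min$ of two expressions in the statement reflects the fact that the supremum of $|\zeta_{\text{DYS}}|$ on the boundary is attained at two competing configurations: in one regime, the maximizer $z_A$ lies on the circular arc piece of $\partial \srg{\resa{\cA}}$ arising from the Lipschitz bound on $\cA$, while in the other regime it lies on the straight piece arising from monotonicity of $\cA$. Each regime yields one of the terms inside the $\min$, and the critical-point analysis in $z_B$ and $z_C$ selects the extremal configuration for each.

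The main obstacle will be twofold: (i) correctly constructing $\cC'$ and verifying the arc property for $(2-\lambda)\opI - \alpha\cC'$, and (ii) executing the three-variable boundary optimization, including the case split that produces the two competing terms in the $\min$. Once the $\opA$-Lipschitz claim is established, the symmetric statement with $\opB$ Lipschitz follows immediately from the symmetry of $\zeta_{\text{DYS}}(z_A, z_B, z_C; \alpha, \lambda)$ in $z_A$ and $z_B$, together with the fact that $\srg{\resa{\cA}}$ and $\srg{\resa{\cB}}$ take the same form once $\cA$ and $\cB$ are interchanged.
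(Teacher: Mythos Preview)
Your high-level strategy matches the paper's: invoke Corollary~\ref{cor:tight-coeff-dys-completion} with an enlarged $\cC'$, pass to boundaries via Lemma~\ref{lem:max-mod}, identify two competing $z_A$-extremizers that yield the two arms of the $\min$, and finish by the $z_A\leftrightarrow z_B$ symmetry of $\zeta_{\text{DYS}}$. Two specifics, however, diverge from the paper, and the first is a potential gap.

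\textbf{Choice of $\cC'$.} You propose $\cC'=\coco{\beta_\cC}\cap(\opI+\lip{L'})$, modeled on the numerical example of Section~\ref{ss:non-tight}. In that example $\alpha=\lambda=1$, so the second disk in $\srg{(2-\lambda)\opI-\alpha\cC'}$, namely $\Di(2-\lambda-\alpha,\alpha L')$, is centered at the origin and the arc property is automatic. For general $\alpha,\lambda$ the center $2-\lambda-\alpha$ can be negative (e.g.\ $\beta_\cC=1$, $\alpha=3$, $\lambda=0.1$), and then the intersection of a disk with positive center and one with negative center need not satisfy either arc property, so Corollary~\ref{cor:tight-coeff-dys-completion} would not apply. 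The paper sidesteps this by taking the simpler enlargement $\cC'=\alpha^{-1}\bigl((2-\lambda)\opI+\lip{R}\bigr)$, i.e.\ $\srg{\cC'}=\Di\bigl((2-\lambda)/\alpha,\,R/\alpha\bigr)$, so that $(2-\lambda)\opI-\alpha\cC'=\lip{R}$ is a disk at the origin and the arc property is trivial. The radius $R=\sqrt{(2-\lambda)\bigl(2-\lambda-2(1-\eta)\alpha\mu_\cC\bigr)}$ with $\eta=\alpha/(2\beta_\cC(2-\lambda))$ is exactly the distance from $(2-\lambda)/\alpha$ to the far corners of $\srg{\cC}$; this is where the factor $1-\alpha/(2\beta_\cC(2-\lambda))$ in the statement originates.

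\textbf{The optimization.} Rather than expanding $|\zeta_{\text{DYS}}|^2=1-2\lambda\alpha\,\Phi$ and attacking a three-variable problem directly, the paper factors $\zeta_{\text{DYS}}=(1-\lambda z_A)(1-\lambda z_B)+\lambda(2-\lambda-\alpha z_{C'})z_Az_B$, applies the triangle inequality and then Lemma~\ref{lem:CS} (Cauchy--Schwarz) with a free weight $r>0$ to obtain a product of a $z_A$-only factor and a $z_B$-only factor. With the disk $\cC'$ above, $|2-\lambda-\alpha z_{C'}|\equiv R$ on $\partial\srg{\cC'}$, eliminating $z_{C'}$; choosing $r=(2-\lambda)/R$ makes the $z_B$-factor identically $1$ on $\partial\srg{\resa{\cM}}=\Ci(1/2,1/2)$. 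The remaining one-variable problem in $z_A$ over $\partial\srg{\resa{(\lip{L_\cA}\cap\cM)}}$ is then handled by Lemma~\ref{lem:distance-to-circ} and yields precisely the two candidates $z_A=(1+\alpha L_\cA)^{-1}$ and $z_A=(1+\alpha^2 L_\cA^2)^{-1}(1\pm i\alpha L_\cA)$, giving the two terms inside the $\min$. Your direct approach may be workable, but this decoupling is what keeps the computation short and produces the stated formula cleanly.
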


It is straightforward to verify that the values of $\rho$ in Theorems~\ref{thm:lipschitz-dys-a-lip-sm}, \ref{thm:lipschitz-dys-a-lip-b-sm}, and \ref{thm:lipschitz-dys-a-lip-c-sm}  indeed satisfy $\rho<1$, i.e., $\opT_{\opA, \opB, \opC, \alpha, \lambda}$ becomes a strict contraction, under the stated range of parameters.

	\subsection{Proofs of Theorems~\ref{thm:lipschitz-dys-a-lip-sm}, \ref{thm:lipschitz-dys-a-lip-b-sm}, and \ref{thm:lipschitz-dys-a-lip-c-sm}}

Before going through proofs, we introduce two elementary Lemmas.
\begin{lemma}
\label{lem:CS}
    For $a,b,c,d\in[0,\infty)$,
	\[
	\left(\sqrt{ab} + \sqrt{cd}\right)^2 \le (a + c)(b + d).
	\]
\end{lemma}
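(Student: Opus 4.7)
The plan is to prove this by a direct application of the Cauchy--Schwarz inequality (or equivalently, by AM--GM). Since all quantities under square roots are nonnegative, $\sqrt{a}$, $\sqrt{b}$, $\sqrt{c}$, $\sqrt{d}$ are well-defined real numbers, and we can view the left-hand side as the squared inner product of the two vectors $(\sqrt{a},\sqrt{c})$ and $(\sqrt{b},\sqrt{d})$ in $\RR^2$.

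More concretely, first I would expand both sides. The left-hand side expands to $ab + 2\sqrt{abcd} + cd$, and the right-hand side expands to $ab + ad + bc + cd$. After cancellation, the inequality reduces to
\[
2\sqrt{abcd} \le ad + bc,
\]
which is precisely the AM--GM inequality applied to the nonnegative reals $ad$ and $bc$, since $2\sqrt{(ad)(bc)} \le ad + bc$. Equivalently, this follows from the Cauchy--Schwarz inequality
\[
(\sqrt{a}\sqrt{b} + \sqrt{c}\sqrt{d})^2 \le (a + c)(b + d).
\]

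There is essentially no obstacle here: the lemma is an elementary consequence of AM--GM or Cauchy--Schwarz, and the main content of the proof is the algebraic reduction shown above. The only caveat worth noting is that all of $a,b,c,d$ are assumed to lie in $[0,\infty)$, so all square roots and the applications of AM--GM are valid without sign issues.
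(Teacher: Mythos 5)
Your proof is correct and matches the paper's: the paper likewise invokes Cauchy--Schwarz and records the identity $\left(\sqrt{ab}+\sqrt{cd}\right)^2=(a+c)(b+d)-\left(\sqrt{ad}-\sqrt{bc}\right)^2$, which is precisely your reduction $2\sqrt{abcd}\le ad+bc$ rearranged. No issues.
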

\begin{proof}
	This inequality is an instance of Cauchy--Schwarz.
	It can also be directly shown from
	 $\left(\sqrt{ab} + \sqrt{cd}\right)^2=(a + c)(b + d) - \left(\sqrt{ad} - \sqrt{bc}\right)^2$.    
\end{proof}
	
	\begin{lemma}\label{lem:distance-to-circ}
	Consider a circle with center $O$.
	Let $Q \neq O$ be a fixed point.
	($Q$ may otherwise be inside, outside, or on the circle.)
	Let $\phi\in [0,\pi]$, and let $P$ be a point on the circle such that $\angle POQ =\phi$.
	Then the length $\overline{QP}$ is an increasing function of $\phi\in [0,\pi]$.
	\end{lemma}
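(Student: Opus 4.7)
The plan is to apply the law of cosines directly. Let $r$ denote the radius of the circle and set $d = \overline{OA} > 0$, which is well-defined since $A \neq O$. For any point $P$ on the circle with $\angle POA = \phi \in [0,\pi]$, the triangle $OAP$ has sides $\overline{OA} = d$, $\overline{OP} = r$, and the included angle at $O$ equal to $\phi$. Therefore, by the law of cosines,
\[
\overline{AP}^2 = r^2 + d^2 - 2rd \cos\phi.
\]

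Next, I would observe that $\cos\phi$ is strictly decreasing on $[0,\pi]$, so $-2rd\cos\phi$ is strictly increasing in $\phi$ (since $r, d > 0$). Consequently $\overline{AP}^2$ is strictly increasing in $\phi$, and since $\overline{AP} \ge 0$, taking square roots preserves monotonicity, yielding that $\overline{AP}$ is a (strictly) increasing function of $\phi \in [0,\pi]$.

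There is no real obstacle here; the only subtle point is that the law of cosines requires $A, O, P$ to form a (possibly degenerate) triangle, which is fine as long as $d > 0$ and $r \ge 0$, both of which hold by assumption. The case $r = 0$ (degenerate circle) gives $\overline{AP} = d$ constant, which is trivially nondecreasing; the lemma's interesting content is for $r > 0$, where the monotonicity is strict.
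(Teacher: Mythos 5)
Your proof is correct and uses exactly the same argument as the paper, which invokes the law of cosines $\overline{AP}^2 = \overline{PO}^2 + \overline{AO}^2 - 2\overline{PO}\,\overline{AO}\cos\phi$ and leaves the monotonicity of $-\cos\phi$ on $[0,\pi]$ implicit. Your write-up merely spells out the remaining (trivial) steps more explicitly.
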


	\begin{center}
		\def\scale{2}
		\begin{tikzpicture}[scale=\scale]
			\def\lam{0.4};
			\def\t{120};
			\def\r{0.7};
			\def\a{0.1};

			\coordinate (O) at (\r, 0);
			\coordinate (A) at ({3*\r}, 0);
			\coordinate (P) at ({\r+\r*cos(\t)}, {\r*sin(\t)});
			
			\draw (O) circle ({\r});
			
			\filldraw (O) circle[radius={\ptsize / \scale pt}];
			\filldraw (A) circle[radius={\ptsize / \scale pt}];
			\filldraw (P) circle[radius={\ptsize / \scale pt}];
			
			\draw (O) node[below=0pt] {$O$};
			\draw (A) node[below right] {$Q$};
			\draw (P) node[left=4pt] {$P$};
			\draw (O) node[above right=3pt and -2pt] {$\phi$};
			
			\draw [very thick, densely dotted] (A) -- (P);
			\draw (A) -- (O);
			\draw (O) -- (P);
			
			\pic [draw, angle radius=\scale*\a*\cminpt] {angle=A--O--P};
		\end{tikzpicture}	
	\end{center}

	\begin{proof}
By the law of cosines, 
		$\overline{QP}^2 = \overline{PO}^2 + \overline{QO}^2 - 2\overline{PO}\,\overline{QO} \cos\phi$.
	\end{proof}

	\begin{proof}[Proof of Theorem~\ref{thm:lipschitz-dys-a-lip-sm}]
	Consider the first statement,
	     where $\opA$ is $\mu_\cA$-strongly monotone and $L_\cA$-Lipschitz.
		Write
		\[
		    \cA = \sm{\mu_\cA} \cap  \lip{L_\cA},\quad \cB = \cM,\quad \cC = \coco{\beta_\cC}.
		\]
		Then the first statement is equivalent to saying that $\opT_{\cA, \cB, \cC, \alpha, \lambda}$ is $\rho$-Lipschitz, where $\rho$ is as given.
		By Corollary~\ref{cor:tight-coeff-dys} and Lemma~\ref{lem:max-mod}, it is enough to show that
		$\sup_{z \in \cZ^{\text{DYS}}_{\cA, \cB, \cC, \alpha, \lambda}}$ $\abs{z}$
		is equal or smaller than $\rho$, i.e., it is enough to show 
		\[
		    |\zeta_\text{DYS}(z_A, z_B, z_C; \alpha, \lambda)| \le \rho
		\]
		for any $z_A \in\partial \srg{\resa{\cA}}, z_B \in \partial\srg{\resa{\cB}}$ and $z_C \in \partial\srg{\cC}$.

		Let $\theta = \frac{2}{4 - \alpha/\beta_\cC}$.  Then, 
		$\alpha z_C \in \alpha\partial\srg{\cC} = \Ci\left( 2 - \theta^{-1}, 2 - \theta^{-1} \right)$ by Fact~\ref{fact:monotone-srg}. So 
		$
			\abs{2 - \theta^{-1} -\alpha z_C} = 2 - \theta^{-1}.	
		$
		Therefore,
		\begin{align*}
			&\abs{\zeta_{\text{DYS}}(z_A, z_B, z_C; \alpha, \lambda)-(1-\lambda\theta)}^2\\
			&=\lambda^2\abs{\theta^{-1}(z_A-\theta)(z_B-\theta) +(2-\theta^{-1} -\alpha z_C) z_Az_B}^2\\
			&\le \lambda^2\left( \theta^{-1}\abs{z_A-\theta}\abs{z_B-\theta}+\abs{2-\theta^{-1} -\alpha z_C}\abs{z_A}\abs{z_B} \right)^2\\
			&= \lambda^2\left( \theta^{-1}\abs{z_A-\theta}\abs{z_B-\theta}+(2 - \theta^{-1})\abs{z_A}\abs{z_B} \right)^2\\
			&= \lambda^2\left( \sqrt{\theta^{-1}\abs{z_A-\theta}^2 \cdot \theta^{-1}\abs{z_B-\theta}^2} + \sqrt{(2 - \theta^{-1})\abs{z_A}^2 \cdot (2 - \theta^{-1})\abs{z_B}^2} \right)^2\\
			&\le \lambda^2\left( \theta^{-1} \abs{z_A-\theta}^2+(2- \theta^{-1})\abs{z_A}^2 \right) \left( \theta^{-1} \abs{z_B-\theta}^2+(2- \theta^{-1})\abs{z_B}^2 \right),
		\end{align*}
		where the first inequality follows from the triangle inequality and the second is from Lemma~\ref{lem:CS}.
		The second factor simplifies to
		\[
			\theta^{-1} \abs{z_B - \theta}^2 + (2 - \theta^{-1}) \abs{z_B}^2
			= 2\left| z_B - \frac{1}{2} \right|^2 + \theta - \frac{1}{2}
			= \theta,
		\]
		since $z_B \in \partial\srg{\resa{\cB}} = \Ci\left( \frac{1}{2}, \frac{1}{2} \right)$ by Fact~\ref{fact:monotone-srg} and~\ref{fact:srg-trans}.

		We now bound the first factor. Similarly,
		\begin{align*}
			\theta^{-1} \abs{z_A - \theta}^2 + (2 - \theta^{-1}) \abs{z_A}^2
			= 2\left| z_A - \frac{1}{2} \right|^2 + \theta - \frac{1}{2}, 
		\end{align*}
		which is maximized (worst case) when $\left| z_A - \frac{1}{2} \right|$ is maximized.
  
	Let 
	\[
		O_\mu = \frac{1}{2(1+\alpha\mu_{\cA})},
		\quad
		O_L = \frac{1}{1-\alpha^2L_{\cA}^2}
	\]
	be the center of $\partial \srg{\resa{\sm{\mu_{\cA}}}}$ and the center of $\partial \srg{\resa{\lip{L_{\cA}}}}$ (or $\infty$ if $\alpha L_\cA = 1$), respectively.
	Let 
	\[
		P = z_A,
		\quad
        Q = \frac{1}{2}, 
		\quad	
		\left\{ N, N' \right\} = \partial \srg{\resa{\lip{L_{\cA}}}} \cap \partial \srg{\resa{\sm{\mu_{\cA}}}}.
	\]
	We now characterize the maximum value of $\overline{QP} = \abs{z_A - \frac{1}{2}}$ for each of three cases depicted in the figure below. (SRG of resolvent can be easily drawn by inversive geometry introduced in section 2 of \cite{ryu_scaled_2021}.)  
    \begin{center}
		\begin{tabular}{ccc}
		\def\scale{2.2}
		\begin{tikzpicture}[scale=\scale]
		
		\def\m{0.5};
		\def\L{3};
		\def\a{1};
		
		\def\c{20};
		\def\A{0.5};
		
		\def\LipR{\a*\L/(\a^2*\L^2-1)};
		\def\MuR{1/(2*(1+\a*\m))};
		
		\coordinate (C) at ({(1+\a*\m)/(\a^2*\L^2+2*\a*\m+1)},{(\a*sqrt(\L^2-\m^2))/(\a^2*\L^2+2*\a*\m+1)});   
		\coordinate (C') at ({(1+\a*\m)/(\a^2*\L^2+2*\a*\m+1)},{-(\a*sqrt(\L^2-\m^2))/(\a^2*\L^2+2*\a*\m+1)});   
		\coordinate (O') at ({1/(2*(1+\a*\m))},0);
		\coordinate (O) at ({1/(1-\a^2*\L^2)},0);
		\coordinate (Z) at (0,0);
		\coordinate (P) at ({1/(1-\a^2*\L^2) + \LipR * cos(\c)},{\LipR * sin(\c)});
		\coordinate (A) at (\A,0);
		\coordinate (S) at ({(1+\a*\m)/(\a^2*\L^2+2*\a*\m+1)+0.05},{(\a*sqrt(\L^2-\m^2))/(\a^2*\L^2+2*\a*\m+1)+0.17});   
		
		\fill[fill=medgrey] (O') circle ({\MuR});
		\fill[fill=white] (O) circle ({\LipR});
		
		\pic [draw, dashed, angle radius=\scale*\MuR*\cminpt] {angle=C--O'--C'};
		\pic [draw, very thick, angle radius=\scale*\MuR*\cminpt] {angle=C'--O'--C};
		\pic [draw, dashed, angle radius=\scale*\LipR*\cminpt] {angle=C--O--C'};
		\pic [draw, very thick, angle radius=\scale*\LipR*\cminpt] {angle=C'--O--C};

		\filldraw (P) circle[radius={\ptsize / \scale pt}];
		\filldraw (C) circle[radius={\ptsize / \scale pt}];
		\filldraw (C') circle[radius={\ptsize / \scale pt}];
		\filldraw (O) circle[radius={\ptsize / \scale pt}];
		\filldraw (O') circle[radius={\ptsize / \scale pt}];
		\filldraw (A) circle[radius={\ptsize / \scale pt}];
		
		\draw (S) node[right=3pt] {$\srg{\resa{\cA}}$};
		\draw (P) node[left] {$P$};
		\draw (A) node[below right=0pt and -4pt] {$Q$};
		\draw (O) node[below] {$O_L$};
		\draw (O') node[below right=0pt and -6pt] {$O_\mu$};
		\draw (C) node[above] {$N$};
		\draw (C') node[below] {$N'$};
		\draw [<->] (-0.8,0) -- (0.8,0);
		\draw [<->] (0,-0.6) -- (0,0.6);
		\draw [very thick, densely dotted] (A) -- (P);
		\draw (0.1,-0.73) node[below, align=left] {Case $\alpha L_\cA > 1$};
		\end{tikzpicture}
		&
		\def\scale{1.8}
		\begin{tikzpicture}[scale=\scale]
		
		\def\m{0.2};
		\def\L{0.5};
		\def\a{1};
		
		\def\c{170};
		\def\A{0.5};
		
		\def\LipR{\a*\L/(1-\a^2*\L^2)};
		\def\MuR{1/(2*(1+\a*\m))};
		
		\coordinate (C) at ({(1+\a*\m)/(\a^2*\L^2+2*\a*\m+1)},{(\a*sqrt(\L^2-\m^2))/(\a^2*\L^2+2*\a*\m+1)});   
		\coordinate (C') at ({(1+\a*\m)/(\a^2*\L^2+2*\a*\m+1)},{-(\a*sqrt(\L^2-\m^2))/(\a^2*\L^2+2*\a*\m+1)});   
		\coordinate (O') at ({1/(2*(1+\a*\m))},0);
		\coordinate (O) at ({1/(1-\a^2*\L^2)},0);
		\coordinate (Z) at (0,0);
		\coordinate (P) at ({1/(1-\a^2*\L^2) + \LipR * cos(\c)},{\LipR * sin(\c)});
		\coordinate (A) at (\A,0);
		
		\begin{scope}
		\clip (O) circle ({\LipR});
		\fill[fill=medgrey] (O') circle ({\MuR});
		\end{scope}

		\pic [draw, dashed, angle radius=\scale*\LipR*\cminpt] {angle=C'--O--C};
		\pic [draw, very thick, angle radius=\scale*\LipR*\cminpt] {angle=C--O--C'};
		\pic [draw, dashed, angle radius=\scale*\MuR*\cminpt] {angle=C--O'--C'};
		\pic [draw, very thick, angle radius=\scale*\MuR*\cminpt] {angle=C'--O'--C};

		\filldraw (P) circle[radius={\ptsize / \scale pt}];
		\filldraw (C) circle[radius={\ptsize / \scale pt}];
		\filldraw (C') circle[radius={\ptsize / \scale pt}];
		\filldraw (O') circle[radius={\ptsize / \scale pt}];
		\filldraw (O) circle[radius={\ptsize / \scale pt}];
		\filldraw (A) circle[radius={\ptsize / \scale pt}];
		
		\draw (C) node[right=3pt] {$\srg{\resa{\cA}}$};
		\draw (P) node[above left=-3pt and -3pt] {$P$};
		\draw (A) node[below] {$Q$};
		\draw (O') node[below left=0pt and -3pt] {$O_\mu$};
		\draw (O) node[below] {$O_L$};
		\draw (C) node[above=4pt] {$N$};
		\draw (C') node[below=4pt] {$N'$};
		\draw [<->] (-0.2,0) -- (2.2,0);
		\draw [<->] (0,-0.75) -- (0,0.75);
		\draw [very thick, densely dotted] (A) -- (P);
		\draw (1,-0.88) node[below, align=left] {Case $\alpha L_{\cA}<1$};
		\end{tikzpicture}
		&
		\def\scale{1.4}
		\begin{tikzpicture}[scale=\scale]
		\def\a{0.85};
		\def\b{1.1};
		\def\c{0.3};
		\def\A{0.6};
		\def\t{20};
		
		\coordinate (C) at ({0.5},{sqrt(1-2*\a+\b^2-0.25)});   
		\coordinate (C') at ({0.5},{-sqrt(1-2*\a+\b^2-0.25)});    
		\coordinate (O') at (0,0);
		\coordinate (P) at ({sqrt(1-2*\a+\b^2)*cos(\t)},{sqrt(1-2*\a+\b^2)*sin(\t)});
		\coordinate (A) at (0.5,0);
		\coordinate (Z) at ({-sqrt(1-2*\a+\b^2)},0);
		
		\begin{scope}
			\clip ({0.5},{sqrt(1-2*\a+\b^2-0.25)}) rectangle (1,-1);
			\fill[fill=medgrey] (O') circle ({sqrt(1-2*\a+\b^2)});
		\end{scope}
		
		\pic [draw, dashed, angle radius=\scale*sqrt(1-2*\a+\b^2)*\cminpt] {angle=C--O'--C'};
		\pic [draw, very thick, angle radius=\scale*sqrt(1-2*\a+\b^2)*\cminpt] {angle=C'--O'--C};
		\draw [dashed] (C) -- ({0.5},0.8);
		\draw [dashed] (C') -- ({0.5},-0.8);
		\draw [very thick] (C) -- (C');

		\filldraw (P) circle[radius={\ptsize / \scale pt}];
		\filldraw (C') circle[radius={\ptsize / \scale pt}];
		\filldraw (C) circle[radius={\ptsize / \scale pt}];
		\filldraw (O') circle[radius={\ptsize / \scale pt}];
		\filldraw (A) circle[radius={\ptsize / \scale pt}];
		
		\draw (C) node[below left] {$\srg{\resa{\cA}}$};
		\draw (P) node[right] {$P$};
		\draw (A) node[below left] {$Q$};
		\draw (O') node[below] {$O_\mu$};
		\draw (C) node[above right] {$N$};
		\draw (C') node[below right] {$N'$};
		
		\draw [<->] (-1,0) -- (1,0);
		\draw [<->] ({-sqrt(1-2*\a+\b^2)},-0.95) -- ({-sqrt(1-2*\a+\b^2)},0.95);
		\draw [very thick, densely dotted] (A) -- (P);
		\draw (0.3,-1.3) node[fill=white] {Case $\alpha L_{\cA}= 1$};
		\end{tikzpicture}
		\end{tabular}
	\end{center}
    If $\alpha L_{\cA} > 1$, then
	$
		\frac{1}{1-\alpha^2 L_{\cA}^2} < 0 < \frac{1}{2(1+\alpha \mu_{\cA})} < \frac{1}{2}.
	$
	So $Q$ is on the right side of both centers of circles, $O_L$ and $O_\mu$.  
	If $\alpha L_{\cA} < 1$, then
	$
		\frac{1}{2(1+\alpha \mu_{\cA})} < \frac{1}{2} < 1 < \frac{1}{1-\alpha^2 L_{\cA}^2}.
	$
	So $Q$ resides between $O_L$ and $O_\mu$. 
	In both cases, $\overline{QP}$ attains its maximum at $P = N$ or $N'$  by Lemma~\ref{lem:distance-to-circ}.
	If $\alpha L_{\cA} = 1$, a similar reasoning shows that $\overline{QP}$ attains its maximum at $P = N$ or $N'$ by Lemma~\ref{lem:distance-to-circ}.

	Thus,
	\[
		\argmax_{z_A \in \partial\srg{\resa{\cA}}} \, \babs{z_A-\frac{1}{2}}
		=\{N, N'\}
		= \partial\srg{\resa{\lip{L_\cA}}} \cap \partial\srg{\resa{\sm{\mu_\cA}}}.
	\]
Since
	\begin{align*}
	    \{N,N'\}
		&= \frac{1}{\alpha^2 L_\cA^2 + 2\alpha \mu_\cA + 1}
		\left( 
		    \alpha\mu_\cA + 1 \pm i \alpha \sqrt{L_\cA^2 - \mu_\cA^2}
		\right),
	\end{align*}
and
\begin{align*}
		\max_{z_A \in \partial\srg{\resa{\cA}}} \, \babs{z_A-\frac{1}{2}}^2
		&=
		\left( \frac{\alpha\mu_\cA + 1}{\alpha^2 L_\cA^2 + 2\alpha\mu_\cA + 1} - \frac{1}{2} \right)^2
		+
		\left( \frac{\alpha \sqrt{L_\cA^2 - \mu_\cA^2}}{\alpha^2 L_\cA^2 + 2\alpha\mu_\cA + 1} \right)^2
		\\
		&=
		\frac{1}{4} - \frac{\alpha\mu_\cA}{\alpha^2 L_\cA^2 + 2\alpha\mu_\cA + 1},
	\end{align*}
we bound the first factor as
    \[
        \theta^{-1}\abs{z_A - \theta}^2 + \abs{2 - \theta^{-1} -\alpha z_C}\abs{z_A}^2 \le \theta - \frac{2\alpha\mu_\cA}{\alpha^2 L_\cA^2 + 2 \alpha \mu_\cA + 1}.
    \]
		
        Finally, we conclude
		\begin{align*}
			\abs{\zeta_{\text{DYS}}(z_A, z_B, z_C; \alpha, \lambda)}
			&\le \abs{1 - \lambda\theta} + \abs{\zeta_{\text{DYS}}(z_A, z_B, z_C; \alpha, \lambda) - (1-\lambda\theta)}\\
			&\le 1 - \lambda\theta + \lambda\sqrt{\theta \left( \theta - \frac{2\alpha\mu_\cA}{\alpha^2 L_\cA^2 + 2 \alpha \mu_\cA + 1} \right)}.
		\end{align*}
		Note that $\lambda < 2 - \frac{\alpha}{2\beta_\cC} = \theta^{-1}$. So $1 - \lambda \theta > 0$. This completes the proof for the first statement. 
		
		Since $\zeta_{\text{DYS}}(z_A, z_B, z_C; \alpha, \lambda)$ is symmetric with respect to $z_A$ and $z_B$, the second statement follows from the same argument.
	\end{proof}


	\begin{proof}[Proof of Theorem~\ref{thm:lipschitz-dys-a-lip-b-sm}]
	Consider the first statement,
	     where $\opA$ is $L_\cA$-Lipschitz and $\opB$ is $\mu_\cB$-strongly monotone.
		Write
		\[
			\cA = \cM \cap \lip{L_\cA}, \quad \cB = \sm{\mu_\cB}, \quad \cC = \coco{\beta_\cC}.
		\]
		By the same reasoning as for Theorem~\ref{thm:lipschitz-dys-a-lip-sm}, it suffices to show
		\[
		    \left| \zeta_{\text{DYS}}(z_A, z_B, z_C; \alpha, \lambda) \right| \le \rho
		\]
		for any $z_A \in \partial\srg{\resa{\cA}}$, $z_B \in \partial\srg{\resa{\cB}}$, and $z_C \in \partial\srg{\cC}$.
		
		Let 
		\[
			O_\beta = \frac{\alpha}{2\beta_\cC}, 
			\quad P = \alpha z_C,
            \quad Q = 2 - \lambda, 
            \quad Z = 0,
		\]
        as depicted in the following figure.
        \begin{center}
			\def\scale{2}
			\begin{tikzpicture}[scale=\scale]
				\def\a{1};
				\def\b{0.7};
				\def\lam{0.4};
				\def\t{130};
				\def\k{300};
				\def\e{1.6};

				\coordinate (Z) at (0, 0);
				\coordinate (O) at (\a/\b/2, 0);
				\coordinate (A) at (2-\lam, 0);
				\coordinate (P) at ({\a/\b/2+\a/\b/2*cos(\t)}, {\a/\b/2*sin(\t)});
				\coordinate (L) at ({\a/\b/2+\a/\b/2*cos(\k)}, {\a/\b/2*sin(\k)});
				
				\fill[fill=medgrey] (O) circle ({\a/\b/2});
				\draw[very thick] (O) circle ({\a/\b/2});

				\filldraw (Z) circle[radius={\ptsize / \scale pt}];
				\filldraw (O) circle[radius={\ptsize / \scale pt}];
				\filldraw (A) circle[radius={\ptsize / \scale pt}];
				\filldraw (P) circle[radius={\ptsize / \scale pt}];

				\draw (Z) node[below left=0pt] {$Z$};
				\draw (O) node[below right=0pt and -5pt] {$O_\beta$};
				\draw (A) node[below right=0pt] {$Q$};
				\draw (P) node[above left=-4pt] {$P$};
				\draw (L) node[below right=0pt] {$\alpha\srg{\cC}$};
				
				\draw [<->] (-1.2,0) -- (2.2,0);
				\draw [<->] (0,-1.2) -- (0,1.2);
				\draw [very thick, densely dotted] (A) -- (P);
			\end{tikzpicture}	
		\end{center}		
		Observe that 
$\alpha z_C \in \Ci\left( \frac{\alpha}{2\beta_\cC}, \frac{\alpha}{2\beta_\cC} \right)$.
		Since we assume $2 - \lambda > \frac{\alpha}{2\beta_\cC}$, 
		$Q$ is on the right side of $O_\beta$. Thus,
		$
			\abs{2 - \lambda - \alpha z_C} = \overline{QP}
			\le \overline{QZ} = 2 - \lambda
		$
		by Lemma~\ref{lem:distance-to-circ}.
		Then, we have
		\begin{align*}
    		&\left| \zeta_{\text{DYS}}(z_A, z_B, z_C; \alpha, \lambda) \right|^2 \\
    		&= \left| (1 - \lambda z_A)(1 - \lambda z_B) + \lambda(2 - \lambda - \alpha z_C) z_A z_B \right|^2 \\
    		&\le \left( \left| (1 - \lambda z_A)(1 - \lambda z_B) \right| + 
    		\lambda \left| 2 - \lambda - \alpha z_C\right| \left| z_A z_B \right|
    		\right)^2 \\
			&\le (\abs{1 - \lambda z_A}\abs{1 - \lambda z_B} + \lambda(2 - \lambda)\abs{z_A}\abs{z_B})^2\\
			&= \left(
			\sqrt{\abs{1 - \lambda z_A}^2 \abs{1 - \lambda z_B}^2}	
			+
			\sqrt{
				\left( r^{-1}\lambda(2 - \lambda)\abs{z_A}^2 \right)
				\left( r\lambda(2 - \lambda)\abs{z_B}^2 \right)
			}
			\right)^2\\
    		&\le \left( \left| 1 - \lambda z_A \right|^2 + r^{-1} \lambda (2 - \lambda) \left| z_A \right|^2 \right) 
    		\left( \left| 1 - \lambda z_B \right|^2 + r \lambda (2 - \lambda) \left| z_B \right|^2 \right)
    	\end{align*}
    	for any $r>0$,
    	where the first inequality follows from the triangle inequality, the second is from $\overline{QP}
			\le \overline{QZ} $, and the third is from Lemma~\ref{lem:CS}.
    	Set 
    	\[
    		r = \frac{2 - \lambda + 2 \alpha \mu_\cB}{2 - \lambda}.
    	\]
    	The second factor simplifies to
        \[
        \abs{1 - \lambda z_B}^2 + r\lambda(2 - \lambda)\abs{z_B}^2 
    		= 2\lambda(1 + \alpha \mu_\cB)\left| z_B - \frac{1}{2(1 + \alpha \mu_\cB)}\right|^2 + 1 - \frac{\lambda}{2(1 + \alpha \mu_\cB)}
    		= 1
        \]
    	since $z_B \in \partial\srg{\resa{\cB}} = \Ci\left( \frac{1}{2(1+\alpha \mu_{\cB})}, \frac{1}{2(1+\alpha \mu_{\cB})} \right)$. Therefore,
    	\begin{align*}
    		&\left| \zeta_\text{DYS}(z_A, z_B, z_C; \alpha, \lambda) \right|^2\\
    		&\le \left| 1 - \lambda z_A \right|^2 + r^{-1} \lambda (2 - \lambda) \left| z_A \right|^2\\
    		&= \lambda (\lambda + (2 - \lambda)r^{-1})
    		\left| z_A - \frac{1}{\lambda + (2 - \lambda)r^{-1}}\right|^2 
    		+ 1 - \frac{\lambda}{\lambda + (2 - \lambda)r^{-1}},
    	\end{align*}
    	which is maximized when
		$\left| z_A - \frac{1}{\lambda + (2 - \lambda)r^{-1}} \right|$ is maximized.

	Let
	\begin{gather*}
			O_0 = \frac{1}{2}, \quad 
			O_L = \frac{1}{1 - \alpha^2 L_{\cA}^2}, \quad
			P = z_A, \quad
            Q = \frac{1}{\lambda + (2-\lambda)r^{-1}},
			\\
			M = \frac{1}{1 + \alpha L_\cA}, \quad
			\left\{ N, N' \right\} = \partial\cG\big(\resa{\lip{L_\cA}}\big) \cap \partial\cG\big(\resa{\cM}\big) = \frac{1}{1 + \alpha^2 L_\cA^2}\left( 1 \pm i \alpha L_\cA \right).
    \end{gather*}
    We now characterize the maximum value of $\overline{QP} = \abs{z_A - \frac{1}{\lambda + (2 - \lambda)r^{-1}}}$.
    First, note that $Q$ is on the right side of $O_0$, since $r = \frac{2 - \lambda + 2 \alpha \mu_\cB}{2 - \lambda} > 1$ and thus $Q = \frac{1}{\lambda + (2 - \lambda)r^{-1}} > \frac{1}{2} = O_0$.
    Four cases in the following figure cover all possibilities (including the case $O_0 \le Q \le M$, in particular), and we can confirm that $\overline{QP}$ attains its maximum at $P = M$ or $P \in \{N, N'\}$ in each case by applying Lemma~\ref{lem:distance-to-circ} together with the fact that $Q$ is on the right side of $O_0$.
    \begin{center}
			\begin{tabular}{cc}
				{
					\def\scale{1.4}
					\begin{tikzpicture}[scale=\scale]
						\def\a{1};
						\def\L{0.6};
						\def\b{0.7};
						\def\lam{0.4};
						\def\t{170};
						\def\k{300};
						\def\e{1.6};
						\def\m{0.35};
		
						\coordinate (Z) at (0, 0);
						\coordinate (O) at ({1/(1-\a^2*\L^2)}, {0});
						\coordinate (O') at (1/2, 0);
						\coordinate (A) at (1.3, 0);
						\coordinate (P) at ({1/(1-\a^2*\L^2)+\a*\L/(1-\a^2*\L^2)*cos(\t)}, {\a*\L/(1-\a^2*\L^2)*sin(\t)});
						\coordinate (C) at ({1/(1+\a^2*\L^2)}, {\a*\L/(1+\a^2*\L^2)});
						\coordinate (C') at ({1/(1+\a^2*\L^2)}, {-\a*\L/(1+\a^2*\L^2)});
						\coordinate (M) at ({1/(1+\a*\L)}, 0);
						
						\begin{scope}
							\clip (O') circle ({1/2});
							\fill[fill=medgrey] (O) circle ({\a*\L/(1-\a^2*\L^2)});
						\end{scope}
		

						\filldraw (O) circle[radius={\ptsize / \scale pt}];
						\filldraw (O') circle[radius={\ptsize / \scale pt}];
						\filldraw (A) circle[radius={\ptsize / \scale pt}];
						\filldraw (C) circle[radius={\ptsize / \scale pt}];
						\filldraw (C') circle[radius={\ptsize / \scale pt}];
						\filldraw (P) circle[radius={\ptsize / \scale pt}];
		
						\draw (O) node[below right=0pt] {$O_L$};
						\draw (O') node[below left=0pt] {$O_0$};
						\draw (A) node[below=-1pt] {$Q$};
						\draw (P) node[left=-1pt] {$P$};
						\draw (C) node[above left=3pt and -3pt] {$N$};
						\draw (C') node[below left=3pt and -3pt] {$N'$};
						\draw (C) node[right=3pt] {$\srg{\resa{\cA}}$};
		
						\pic [draw, dashed, angle radius={\scale*\cminpt*(\a*\L/(1-\a^2*\L^2))}] {angle=C'--O--C};
						\pic [draw, very thick, angle radius={\scale*\cminpt*(\a*\L/(1-\a^2*\L^2))}] {angle=C--O--C'};
						\pic [draw, dashed, angle radius={\scale*\cminpt*(1/2)}] {angle=C--O'--C'};
						\pic [draw, very thick, angle radius={\scale*\cminpt*(1/2)}] {angle=C'--O'--C};
						
						\draw [<->] (-0.3,0) -- (2.7,0);
						\draw [<->] (0,-1.2) -- (0,1.2);
						\draw [very thick, densely dotted] (A) -- (P);
						\draw (1,-1.3) node[below, align=left] {Case $\alpha L_\cA < 1$, $Q$ is on the left of $O_L$:
						\\ $P = z_A \in \frac{1}{1 + \alpha^2 L_\cA^2}(1 \pm i \alpha L_\cA) = \left\{ N, N' \right\}$ 
						\\ are maxima.};
					\end{tikzpicture}	
				}
				&
				\raisebox{0\height}{
					\def\scale{1.4}
					\begin{tikzpicture}[scale=\scale]
						\def\a{1};
						\def\L{0.6};
						\def\b{0.7};
						\def\lam{0.4};
						\def\t{170};
						\def\k{300};
						\def\e{1.6};
						\def\m{0.35};
		
						\coordinate (Z) at (0, 0);
						\coordinate (O) at ({1/(1-\a^2*\L^2)}, {0});
						\coordinate (O') at (1/2, 0);
						\coordinate (A) at (2, 0);
						\coordinate (P) at ({1/(1-\a^2*\L^2)+\a*\L/(1-\a^2*\L^2)*cos(\t)}, {\a*\L/(1-\a^2*\L^2)*sin(\t)});
						\coordinate (C) at ({1/(1+\a^2*\L^2)}, {\a*\L/(1+\a^2*\L^2)});
						\coordinate (C') at ({1/(1+\a^2*\L^2)}, {-\a*\L/(1+\a^2*\L^2)});
						\coordinate (M) at ({1/(1+\a*\L)}, 0);
						
						\begin{scope}
							\clip (O') circle ({1/2});
							\fill[fill=medgrey] (O) circle ({\a*\L/(1-\a^2*\L^2)});
						\end{scope}
		

						\filldraw (O) circle[radius={\ptsize / \scale pt}];
						\filldraw (O') circle[radius={\ptsize / \scale pt}];
						\filldraw (A) circle[radius={\ptsize / \scale pt}];
						\filldraw (M) circle[radius={\ptsize / \scale pt}];
						\filldraw (P) circle[radius={\ptsize / \scale pt}];

						\draw (O) node[below=0pt] {$O_L$};
						\draw (O') node[below left=0pt and -5pt] {$O_0$};
						\draw (A) node[below right=-1pt] {$Q$};
						\draw (P) node[left=-1pt] {$P$};
						\draw (C) node[right=3pt] {$\srg{\resa{\cA}}$};
						\draw (M) node[below right=0pt and -1pt] {$M$};
		
						\pic [draw, dashed, angle radius={\scale*\cminpt*(\a*\L/(1-\a^2*\L^2))}] {angle=C'--O--C};
						\pic [draw, very thick, angle radius={\scale*\cminpt*(\a*\L/(1-\a^2*\L^2))}] {angle=C--O--C'};
						\pic [draw, dashed, angle radius={\scale*\cminpt*(1/2)}] {angle=C--O'--C'};
						\pic [draw, very thick, angle radius={\scale*\cminpt*(1/2)}] {angle=C'--O'--C};

						\draw [<->] (-0.3,0) -- (2.7,0);
						\draw [<->] (0,-1.2) -- (0,1.2);
						\draw [very thick, densely dotted] (A) -- (P);
						\draw (1,-1.2) node[below=5pt, align=left] {
						Case $\alpha L_\cA < 1$, $Q$ is on the right of \\ $O_L$ : 
						$P = z_A = \frac{1}{1 + \alpha L_\cA} = M$ is\\maxima.};
					\end{tikzpicture}	
				}
			\end{tabular}
			
			\begin{tabular}{cc}
		    	{
				\def\scale{1.4}
				\begin{tikzpicture}[scale=\scale]
					\def\a{1};
					\def\L{1.5};
					\def\b{0.7};
					\def\lam{0.4};
					\def\t{50};
					\def\k{300};
					\def\e{1.6};
					\def\m{0.35};
	
					\coordinate (Z) at (0, 0);
					\coordinate (O) at ({1/(1-\a^2*\L^2)}, {0});
					\coordinate (O') at (1/2, 0);
					\coordinate (A) at (1.3, 0);
					\coordinate (P) at ({1/2*cos(\t)+1/2},{1/2*sin(\t)});
					\coordinate (C) at ({1/(1+\a^2*\L^2)}, {\a*\L/(1+\a^2*\L^2)});
					\coordinate (C') at ({1/(1+\a^2*\L^2)}, {-\a*\L/(1+\a^2*\L^2)});
					\coordinate (L) at ({1/2},{-1/2});
					\coordinate (M) at ({1/(1+\a*\L)}, 0);
					
					\begin{scope}
						\fill[fill=medgrey] (O') circle ({1/2});
						\fill[fill=white] (O) circle ({\a*\L/(1-\a^2*\L^2)});
					\end{scope}
	

					\filldraw (O) circle[radius={\ptsize / \scale pt}];
					\filldraw (O') circle[radius={\ptsize / \scale pt}];
					\filldraw (A) circle[radius={\ptsize / \scale pt}];
					\filldraw (P) circle[radius={\ptsize / \scale pt}];
					\filldraw (C) circle[radius={\ptsize / \scale pt}];
					\filldraw (C') circle[radius={\ptsize / \scale pt}];
	
					\draw (O) node[below right=0pt] {$O_L$};
					\draw (O') node[below right=0pt] {$O_0$};
					\draw (A) node[below=-1pt] {$Q$};
					\draw (P) node[above right=-2pt and -2pt] {$P$};
					\draw (C) node[above right=0pt and -3pt] {$N$};
					\draw (C') node[below right=0pt and -3pt] {$N'$};
					\draw (L) node[below right=3pt] {$\srg{\resa{\cA}}$};
	
					\pic [draw, dashed, angle radius={\scale*\cminpt*(\a*\L/(\a^2*\L^2-1))}] {angle=C--O--C'};
					\pic [draw, dashed, angle radius={\scale*\cminpt*(1/2)}] {angle=C--O'--C'};
					\pic [draw, very thick, angle radius={\scale*\cminpt*(\a*\L/(\a^2*\L^2-1))}] {angle=C'--O--C};
					\pic [draw, very thick, angle radius={\scale*\cminpt*(1/2)}] {angle=C'--O'--C};
					
					\draw [<->] (-2.3,0) -- (1.5,0);
					\draw [<->] (0,-1.2) -- (0,1.2);
					\draw [very thick, densely dotted] (A) -- (P);
					\draw (-0.2,-1.3) node[below, align=left] {
					Case $\alpha L_\cA > 1$ : \\
					$P = z_A \in \frac{1}{1 + \alpha^2 L_\cA^2}(1 \pm i \alpha L_\cA) =$\\ $ \left\{ N, N' \right\}$ are maxima.};
				\end{tikzpicture}	
			    }
			     &
			     {
				\def\scale{1.4}
				\begin{tikzpicture}[scale=\scale]
					\def\b{0.7};
					\def\lam{0.4};
					\def\t{50};
					\def\k{300};
					\def\e{1.6};
					\def\m{0.35};
	
					\coordinate (Z) at (0, 0);
					\coordinate (O') at (1/2, 0);
					\coordinate (A) at (1.5, 0);
					\coordinate (P) at ({1/2*cos(\t)+1/2},{1/2*sin(\t)});
					\coordinate (C) at ({1/2}, {1/2});
					\coordinate (C') at ({1/2}, {-1/2});
					\coordinate (L) at ({1/2+1/2*cos(\k)},{1/2*sin(\k)});
					
					\begin{scope}
					    \clip (0.5,-0.5) rectangle (1,0.5);
						\fill[fill=medgrey] (O') circle ({1/2});
					\end{scope}
					
					\filldraw (O') circle[radius={\ptsize / \scale pt}];
					\filldraw (A) circle[radius={\ptsize / \scale pt}];
					\filldraw (P) circle[radius={\ptsize / \scale pt}];
					\filldraw (C) circle[radius={\ptsize / \scale pt}];
					\filldraw (C') circle[radius={\ptsize / \scale pt}];
	
					\draw (O') node[below right=0pt] {$O_0$};
					\draw (A) node[below=-1pt] {$Q$};
					\draw (P) node[above right=-2pt and -2pt] {$P$};
					\draw (C) node[above left=0pt and 0pt] {$N$};
					\draw (C') node[below left=0pt and 0pt] {$N'$};
					\draw (L) node[below right=1.5pt] {$\srg{\resa{\cA}}$};
	
					\pic [draw, dashed, angle radius={\scale*\cminpt*(1/2)}] {angle=C--O'--C'};
					\pic [draw, very thick, angle radius={\scale*\cminpt*(1/2)}] {angle=C'--O'--C};
					\draw [very thick] (C) -- (C');
					
					\draw [dashed] (0.5,-1.2) -- (0.5,1.2);
					
					\draw [<->] (-0.4,0) -- (1.7,0);
					\draw [<->] (0,-1.2) -- (0,1.2);
					\draw [very thick, densely dotted] (A) -- (P);
					\draw (-0.2,-1.3) node[below right=0pt and -33pt, align=left] {
					Case $\alpha L_\cA = 1$ : \\
					$P = z_A \in \frac{1}{1 + \alpha^2 L_\cA^2}(1 \pm i \alpha L_\cA) =$ \\ $\left\{ N, N' \right\}$ are maxima.};
				\end{tikzpicture}	
			}
			\end{tabular}
		\end{center}
        If $\overline{QP}$ attains its maximum at $P = M$, i.e., $z_A = \frac{1}{1+\alpha L_\cA}$,
    	\begin{align*}
    		\abs{1-\lambda z_A}^2+r^{-1}\lambda(2 - \lambda)\abs{z_A}^2
    	    = 1 -  \frac{2\alpha\lambda ( \mu_\cB ( 2 - \lambda) + 
    	     L_\cA(2 - \lambda + 2\alpha \mu_\cB) )}
    		{(1 + \alpha L_\cA)^2(2 - \lambda + 2 \alpha \mu_\cB)}.
    	\end{align*}
    	Otherwise, if $\overline{QP}$ attains its maximum at $P \in \{N, N'\}$, i.e., $z_A \in \frac{1}{1 + \alpha^2 L_\cA^2}(1 \pm i \alpha L_\cA)$, 
    	\begin{align*}
    	    \abs{1-\lambda z_A}^2+r^{-1}\lambda(2 - \lambda)\abs{z_A}^2
    	    = 1-\frac{2 \alpha \mu_{\cB}\lambda(2-\lambda)}{(1+\alpha^2L_{\cA}^2)(2-\lambda+2\alpha \mu_{\cB})}.
    	\end{align*}
    	Therefore,
    	\begin{align*}
    		&\abs{\zeta_{\text{DYS}}(z_A, z_B, z_C; \alpha, \lambda)}^2 \\
			&\le
    			1 - \min\left\{ 
    			\frac{2 \alpha \mu_{\cB}\lambda(2-\lambda)}{(1+\alpha^2L_{\cA}^2)(2-\lambda+2\alpha \mu_{\cB})},
    			\frac{2\alpha\lambda ( ( 2 - \lambda)(\mu_\cB +L_\cA) + 
    			 2\alpha \mu_\cB L_\cA) }
    			{(1 + \alpha L_\cA)^2(2 - \lambda + 2 \alpha \mu_\cB)}
    		 \right\}.
    	\end{align*}
    	This completes the proof for the first statement. 
		
        Since $\zeta_{\text{DYS}}(z_A, z_B, z_C; \alpha, \lambda)$ is symmetric with respect to $z_A$ and $z_B$, the second statement follows from the same argument.
        \end{proof}
    
    	\begin{proof}[Proof of Theorem~\ref{thm:lipschitz-dys-a-lip-c-sm}]
        First consider the first case,
	     where $\opA$ is $L_\cA$-Lipschitz and $\opC$ is $\mu_\cC$-strongly monotone.
	     Write\[
			\cA = \cM \cap \lip{L_\cA}, \quad \cB = \cM, \quad \cC = \sm{\mu_\cC} \cap \coco{\beta_\cC} .
		\]
	    
        Let 
        \[
            O = \frac{2 - \lambda}{\alpha}, 
            \quad
            O_\beta = \frac{1}{2\beta_\cC},
            \quad
            \{ N, N' \} = \partial\srg{\cM_{\mu_\cC}} \cap \partial\srg{\cC_{\beta_\cC}}.
        \]
		Define
		\[
		    \cC' = \alpha^{-1}((2 - \lambda)\opI + \lip{R})
		\]
        so that $(2 - \lambda)\opI - \alpha \cC' = \cL_R$ satisfies an arc property and set $R$ to the minimum value that satisfies $\cC \subset \cC'$.
        Observe that $\srg{\cC'}$ is centered at $O$, which is on the right side of $O_\beta$ since $2 - \lambda > \frac{\alpha}{2\beta_\cC}$.
        Therefore, we have
        \begin{align*}
            R =     
            \alpha \overline{ON} 
            = \sqrt{(2 - \lambda)(2 - \lambda - 2(1 - \eta)\alpha\mu_\cC)}
        \end{align*}
        with $\eta = \frac{\alpha}{2\beta_\cC(2 - \lambda)}$, as depicted in the following figure.

        \begin{center}
			\def\scale{1.3}
			\begin{tikzpicture}[scale=\scale]
				\def\a{1};
				\def\b{0.7};
				\def\lam{0.4};
				\def\t{330};
				\def\k{300};
				\def\e{1.6};
				\def\m{0.35};
				\def\lipR{sqrt((2-\lam-\a*\m)^2+\a^2*(\m/\b-\m^2))};

				\coordinate (Z) at (0, 0);
				\coordinate (O) at (\a/\b/2, 0);
				\coordinate (A) at (2-\lam, 0);
				\coordinate (P) at ({\a/\b/2+\a/\b/2*cos(\t)}, {\a/\b/2*sin(\t)});
				\coordinate (L) at ({\a/\b/2+\a/\b/2*cos(\k)}, {\a/\b/2*sin(\k)});
				\coordinate (C) at ({\a*\m}, {\a*sqrt(\m/\b-\m^2)});
				\coordinate (C') at ({\a*\m}, {-\a*sqrt(\m/\b-\m^2)});
				\coordinate (M) at ({2-\lam+\lipR*cos(\t)},{\lipR*sin(\t)});
				
				\fill[fill=lightgrey] (A) circle ({\lipR});
				\begin{scope}
				    \clip ({\a*\m},{-\a/(2*\b)}) rectangle ({\a/\b},{\a/(2*\b)});
				    \fill[fill=medgrey] (O) circle ({\a/\b/2});
				\end{scope}
				

				\filldraw (O) circle[radius={\ptsize / \scale pt}];
				/\filldraw (A) circle[radius={\ptsize / \scale pt}];
				\filldraw (C) circle[radius={\ptsize / \scale pt}];
				\filldraw (C') circle[radius={\ptsize / \scale pt}];

				\draw (O) node[above right=0pt] {$O_\beta$};
				\draw (A) node[above right=0pt] {$O$};
				\draw (C) node[above left=-3pt] {$N$};
				\draw (C') node[below left=-3pt] {$N'$};
				\draw (L) node[above left=0pt] {$\srg{\cC}$};
				\draw (M) node[below right] {$\srg{\cC'}$};
				
				\pic [draw, dashed, angle radius=\scale*\cminpt*\a/\b/2] {angle=C--O--C'};
				
				\draw [<->] (-0.2,0) -- (3.2,0);
				\draw [<->] (0,-1.5) -- (0,1.5);
			\end{tikzpicture}	
		\end{center}

		By Corollary~\ref{cor:tight-coeff-dys-completion}, it suffices to show
		\[
			\abs{\zeta_{\text{DYS}}(z_A, z_B, z_{C'}; \alpha, \lambda)}
			\le \rho
		\]
		for any $z_A \in \partial\srg{\resa{\cA}}$, $z_B \in \partial\srg{\resa{\cB}}$, and
		$z_{C'} \in \partial\srg{\cC'}$. 
        We have
    	\begin{align*}
			&\left| \zeta_{\text{DYS}}(z_A, z_B, z_{C'}; \alpha, \lambda) \right|^2 \\
    		&= \left| (1 - \lambda z_A)(1 - \lambda z_B) + \lambda(2 - \lambda - \alpha z_{C'}) z_A z_B \right|^2
    		\\
    		&\le \left( \abs{1 - \lambda z_A}\abs{1 - \lambda z_B} + 
    		\lambda\abs{2 - \lambda - \alpha z_{C'}} \abs{z_A} \abs{z_B} \right)^2
			\\
			&= \left( \abs{1 - \lambda z_A}\abs{1 - \lambda z_B} + 
    		\lambda R \abs{z_A} \abs{z_B} \right)^2
			\\
			&= \left(
				\sqrt{ \abs{1 - \lambda z_A}^2\abs{1 - \lambda z_B}^2 }
				+
				\sqrt{ \left( r^{-1}\lambda R \abs{z_A}^2 \right)
				\left( r\lambda R \abs{z_B}^2 \right)
				}
			\right)^2
			\\
    		&\le \left(\abs{1 - \lambda z_A}^2 + r^{-1} \lambda R  \abs{z_A}^2\right)
    		\left(\abs{1 - \lambda z_B}^2 + r \lambda R  \abs{z_B}^2\right)
    	\end{align*}
    	for any $r>0$,
    	where first inequality follows from the triangle inequality, second equality follows from $z_{C'} \in \partial\srg{\cC'} = \Ci\left(\frac{2 - \lambda}{\alpha}, \frac{R}{\alpha}\right)$, and last inequality follows from Lemma~\ref{lem:CS}.
    	Set
		\[
			r = \frac{2 - \lambda}{R}	
		\]
		and the second factor simplifies to
    	\[
    	    \abs{1 - \lambda z_B}^2 + r \lambda R \abs{z_B}^2 = 2\lambda \left| z_B - \frac{1}{2}\right|^2 + 1 - \frac{\lambda}{2} = 1
    	\]
    	as $z_B \in \partial\srg{\resa{\cB}} = \Ci\left( \frac{1}{2}, \frac{1}{2} \right)$. Therefore,
    	\begin{align*}
    	  &\left| \zeta_{\text{DYS}}(z_A, z_B, z_{C'}; \alpha, \lambda) \right|^2 \\
    	  &\le
    	   \abs{1 - \lambda z_A}^2 + \lambda \frac{R^2}{2 - \lambda} \abs{z_A}^2
    		\\
    		&=\left( \lambda^2 + \lambda \frac{R^2}{2 - \lambda} \right)
    		\left| z_A - \frac{1}{\lambda + R^2 / (2 - \lambda)} \right|^2 + 1 - \frac{\lambda}{\lambda + R^2/(2 - \lambda)},
    	\end{align*}
		which is maximized when  $\left| z_A - \frac{1}{\lambda + R^2 / (2 - \lambda)}\right|$ is maximized.
		By the same reasoning as that of Theorem~\ref{thm:lipschitz-dys-a-lip-b-sm}, the maximum is attained at $z_A = \frac{1}{1 + \alpha^2 L_\cA^2}(1 \pm i \alpha L_\cA)$ or $z_A = \frac{1}{1 + \alpha L_\cA}$.
		(Since $\cA = \cM \cap \lip{L_\cA}$ for both the current setup and that of Theorem~\ref{thm:lipschitz-dys-a-lip-b-sm}, the illustration of $\cG(\resa{\cA})$ is identical and $Q=\frac{1}{\lambda + R^2 / (2 - \lambda)} > \frac{1}{2}=O_0$ implies $Q$ is on the right side of $O_0$).
    	In the first case where $z_A = \frac{1}{1 + \alpha L_\cA}$ is a maxima,
		\begin{align*}
			\abs{1 - \lambda z_A}^2 + \lambda \frac{R^2}{2 - \lambda} \abs{z_A}^2
			&= 1 + \frac{1}{(1 + \alpha L_\cA)^2}
			\left( -2\lambda(1 + \alpha L_\cA) + \lambda^2 + \frac{\lambda R^2}{2-\lambda} \right)
			\\
			&= 1 - \frac{2 \lambda \alpha(L_\cA + (1 - \eta)\mu_\cC)}{(1 + \alpha L_\cA)^2}
		\end{align*}
            which follows from the definition $R = \sqrt{(2 - \lambda)(2 - \lambda - 2(1 - \eta)\alpha\mu_\cC)}$.
		In the second case where $z_A = \frac{1}{1 + \alpha^2 L_\cA^2}(1 \pm i \alpha L_\cA)$ are maxima,
		\begin{align*}
			\abs{1 - \lambda z_A}^2 + \lambda \frac{R^2}{2 - \lambda} \abs{z_A}^2
			&= 1 + \frac{1}{1 + \alpha^2 L_\cA^2}\left( -2\lambda + \lambda^2 + \frac{\lambda R^2}{2-\lambda} \right)
			\\
			&= 1 - \frac{2 \lambda \alpha \mu_\cC (1 - \eta)}{1 + \alpha^2 L_\cA^2}.
		\end{align*}
		Therefore, 
		\begin{align*}
			&\left| \zeta_{\text{DYS}}(z_A, z_B, z_{C'}; \alpha, \lambda) \right|^2
			\le 1 - \min\left\{ 
				\frac{2 \lambda \alpha(L_\cA + (1 - \eta)\mu_\cC)}{(1 + \alpha L_\cA)^2}, 
				\frac{2 \lambda \alpha \mu_\cC (1 - \eta)}{1 + \alpha^2 L_\cA^2}
			\right\}.
		\end{align*}
		This completes the proof for the first statement.

		Since $\zeta_{\text{DYS}}(z_A, z_B, z_{C'}; \alpha, \lambda)$ is symmetric with respect to $z_A$ and $z_B$, the second statement follows from the same argument.
	\end{proof}	

        \subsection{Comparison to prior factors}
        \subsubsection{Contraction factors of DYS}
        
 Our contraction factors of Theorems~\ref{thm:lipschitz-dys-a-lip-sm}, \ref{thm:lipschitz-dys-a-lip-b-sm}, and \ref{thm:lipschitz-dys-a-lip-c-sm} are smaller (better) than the six prior contraction factors of  \cite[Theorem~D.6]{davis2015three}.
		However, accurate comparisons require making \emph{appropriate updates} to the prior results.

    Theorem~D.6 of \cite{davis2015three} presents six contraction factors, which are obtained by making certain substitutions to the equation (D.33) of \cite{davis2015three}. Carefully carrying out these calculations leads to the following updates.
		 In Theorem~D.6.1 of \cite{davis2015three}, the contraction factor assuming conditions of the second statement of our Theorem~\ref{thm:lipschitz-dys-a-lip-sm}
		(so $\opB$ is $\mu_\cB$-strongly monotone and $L_\cB$-Lipschitz) should be updated to
		\[
			\rho = \sqrt{1 - \frac{2\mu_\cB \alpha \lambda}{(1 + \alpha L_\cB)^2}}.
		\]
		In Theorem~D.6.3 and D.6.4 of \cite{davis2015three}, the contraction factors 
		assuming conditions of our Theorem~\ref{thm:lipschitz-dys-a-lip-b-sm}
		(so $\opA$ is $\mu_\cA$-strongly monotone and $\opB$ is $L_\cB$-Lipschitz, and vice-versa) should be updated to
		\begin{align*}
		    \rho &= \sqrt{1 -\frac{\lambda}{3} \min\left\{ 
				\frac{2\alpha\mu_\cA}{(1 + \alpha L_\cB)^2}, 
				\frac{\lambda}{1 + 2\alpha^2 L_\cB^2}\left( \frac{2-\epsilon}{\lambda} - 1 \right)
			 \right\}},\\
			 	\rho' &= \sqrt{1 - \frac{\lambda}{4}\min\left\{ 
				\frac{2\alpha \mu_\cB}{1 + 2\alpha^2 L_\cA^2}, 
				\frac{2\beta_\cC - \alpha/\epsilon}{\alpha},
				\frac{\lambda}{1 + 2\alpha^2 L_\cA^2}\left( \frac{2 - \epsilon}{\lambda} - 1 \right)
			 \right\}}
		\end{align*}
  		respectively, where $\epsilon \in (\frac{\alpha}{2\beta_\cC}, \min(1, 2 - \lambda))$.
       In Theorem~D.6.5 and~D.6.6 of \cite{davis2015three}, the contraction factors 
		assuming conditions of our Theorem~\ref{thm:lipschitz-dys-a-lip-c-sm}
		(so $\opB$ is $\mu_\cB$-strongly monotone and $\opC$ is $L_\cC$-Lipschitz, and $\opA$ is $\mu_\cA$-strongly monotone and $\opC$ is $L_\cC$-Lipschitz) should be updated to
        \begin{align*}
            \rho &= \sqrt{1 - \frac{\lambda}{4}\min\left\{ 
                \frac{2\alpha \mu_\cC(1-\eta)}{1 + 2\alpha^2 L_\cA^2}, 
                \frac{2\eta\beta_\cC - \alpha/\epsilon}{\alpha},
                \frac{\lambda}{1 + 2\alpha^2 L_\cA^2}\left( \frac{2 - \epsilon}{\lambda} - 1 \right)
             \right\}},\\
            \rho' &= \sqrt{1 - \frac{2\alpha\lambda\mu_\cC(1 - \eta)}{(1 + \alpha L_\cB)^2}},
        \end{align*}
        		where $\eta \in (0, 1)$ is a constant satisfying $\eta > \frac{\alpha}{2\beta_\cC \epsilon}$.
		($\epsilon$ is as defined previously).

        \begin{figure}
            \centering
            \includegraphics[width=0.9\textwidth]{./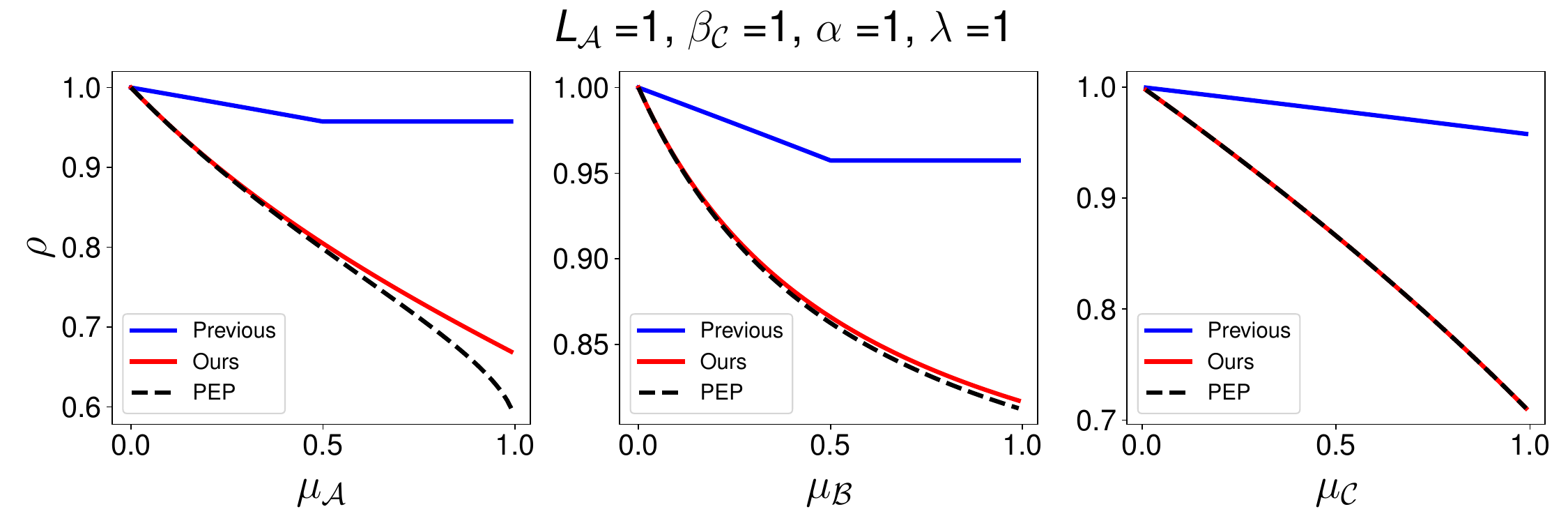}
            \includegraphics[width=0.9\textwidth]{./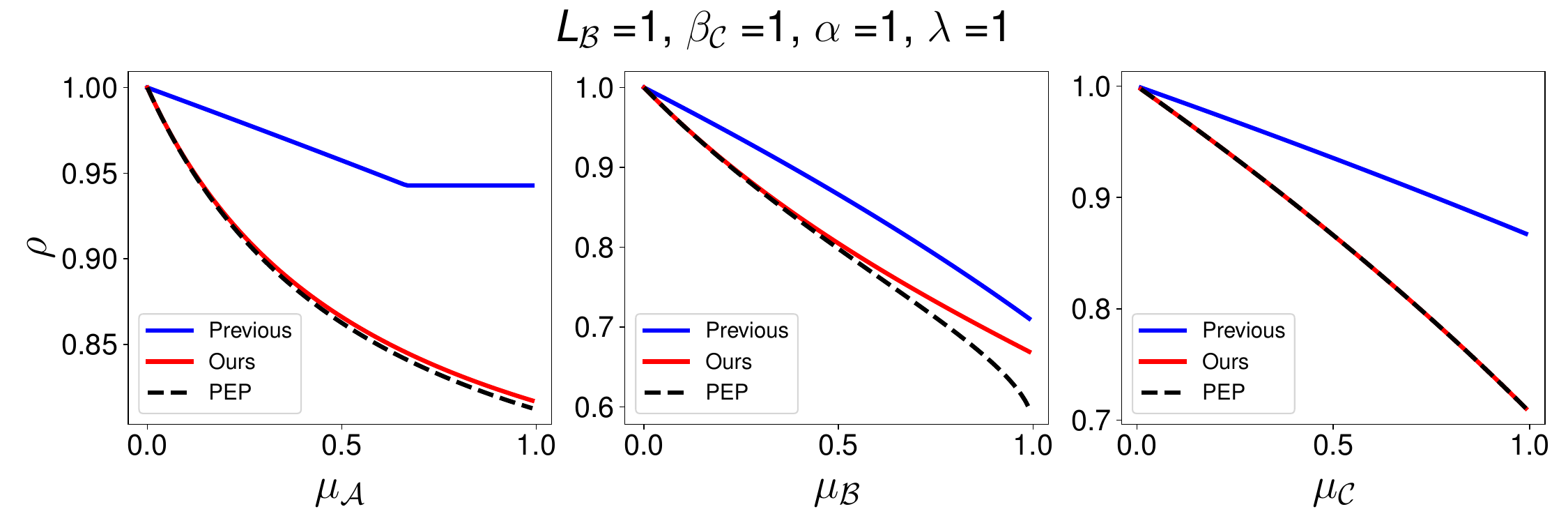}
            \caption{DYS contraction factors obtained by the previous work \cite{davis2015three} (blue), our work (red), and the tight factor calculated via PEP \cite{ryu_operator_2020} (black). 
            For every case, we set $\alpha = \lambda = 1$ and operators in $\cC$ to be $\beta_\cC = 1$-cocoercive.
            We set $\cA$ and $\cB$ as $1$-Lipschitz-monotone class in the plots above and below, respectively. 
            We further impose the strong monotonicity for $\cA$, $\cB$, and $\cC$ in each case and plot the contraction factors accordingly.
            }
            \label{fig:comparison-to-previous-rates}
        \end{figure}
        By simple calculation, we could verify that our six contraction factors are strictly better than those updated prior rates, respectively. The detailed calculations comparing our rates against the prior rates of \cite{davis2015three} are carried out in the arXiv version of this paper \cite[Section~3.2]{lee2022convergence}. In this section, we want to focus on the numerical results of this comparison with the worst-case rate obtained by the performance estimation problem (PEP) framework \cite{drori2014performance,taylor2017smooth,ryu_operator_2020}.
        For a fair comparison with prior rates, we optimize the choice of $\epsilon$ and $\eta$ to minimize the factors above and compare with our results. As depicted in Figure~\ref{fig:comparison-to-previous-rates}, the contraction factors we derived are much closer to the tight contraction factors evaluated by the PEP approach.
        In this sense, our contraction factors are a better proxy of the tight contraction factors than the earlier results in \cite{davis2015three}.
        We further investigate the advantages of our rates in assessing the computational cost, especially in an ill-conditioned case where the coefficient of the strong monotonicity $\mu$ imposed to $\cA$, $\cB$, or $\cC$ is infinitesimal.
        For the six cases addressed in Theorems~\ref{thm:lipschitz-dys-a-lip-sm}, \ref{thm:lipschitz-dys-a-lip-b-sm}, and \ref{thm:lipschitz-dys-a-lip-c-sm}, one can simply plug in $\mu = 0$ and confirm that the contraction factor $\rho = 1$ in such a case ($\mu$ is one of the $\mu_\cA$, $\mu_\cB$, or $\mu_\cC$ appearing in each case).
        The same applies to the contraction rates provided in the earlier work. 
        Therefore, one can approximate a contraction factor $\rho$ via the first-order Taylor series expansion: $\rho \approx 1 +  \mu (d\rho / d\mu |_{\mu = 0})$.
        If we want to evaluate the computational cost to reach a predetermined degree of error, we should consider 
        \[
            -\frac{1}{\log \rho}
            \approx -\frac{1}{\log \left( 1 +  \mu \left( \frac{d\rho}{d\mu} \vert_{\mu = 0} \right) \right)}
             \approx  -\frac{1}{\mu \left( \frac{d\rho}{d\mu} \vert_{\mu = 0} \right)}
        \]
        as it is proportional to the upper bound of the total number of iterations.
        Therefore, denoting the contraction factors in our and the previous work \cite{davis2015three} by $\rho_{\mathrm{ours}}$ and $\rho_{\mathrm{prev}}$ respectively, $\frac{d \rho_{\mathrm{ours}}}{d \mu} / \frac{d \rho_{\mathrm{prev}}}{d \mu} \rvert_{\mu = 0}$ would be an appropriate proxy to assess which of two are better; larger the value is, better our factor is.
        As presented in Table~\ref{tab:comparison-taylor-1d-v2}, this value is strictly larger than $1$ in every case, meaning that our factors are always better than the prior factors. 
        Indeed, these discrepancies are even higher than $4$ in some cases, supporting significant improvement of our results in the sense we are aware of.



        \begin{table}[]
        \renewcommand{\arraystretch}{1.5}
       \centering
       \caption{Evaluation of $\frac{d \rho_{\mathrm{ours}}}{d \mu} / \frac{d \rho_{\mathrm{prev}}}{d \mu} \rvert_{\mu = 0}$. 
       Each row corresponds to one of the six settings addressed in Theorems~\ref{thm:lipschitz-dys-a-lip-sm}, \ref{thm:lipschitz-dys-a-lip-b-sm}, and \ref{thm:lipschitz-dys-a-lip-c-sm} following the same order as in the previous theorems.
       To clarify, $\rho_{\mathrm{ours}}$ and $\rho_{\mathrm{prev}}$ are the contraction factors presented in our work and the previous work \cite{davis2015three}, respectively, and $\mu$ represents $\mu_\cA$, $\mu_\cB$, or $\mu_\cC$ in each row.
       }
       \begin{tabular}{cccc}
       \toprule
    $\cA$ & $\cB$ & $\cC$ & $\frac{d \rho_{\mathrm{ours}}}{d \mu} / \frac{d \rho_{\mathrm{prev}}}{d \mu} \rvert_{\mu = 0}$  \\ 
       \midrule
       $\sm{\mu_\cA} \cap \lip{L_\cA}$ & $\cM$ & $\coco{\beta_\cC}$ & $3(1 + \alpha L_\cA)^2 /( 1 + \alpha^2 L_\cA^2)$ \\
       $\cM$ & $\sm{\mu_\cB} \cap \lip{L_\cB}$ & $\coco{\beta_\cC}$ & $(1 + \alpha L_\cB)^2 /( 1 + \alpha^2 L_\cB^2)$ \\
       $\lip{L_\cA}$ & $\sm{\mu_\cB}$ & $\coco{\beta_\cC}$ & $4 (1 + 2\alpha^2 L_\cA^2) / (1 + \alpha^2 L_\cA^2)$\\ 
       $\sm{\mu_\cA}$ & $\lip{L_\cB}$ & $\coco{\beta_\cC}$ & $3 (1 + \alpha L_\cB)^2 / (1 + \alpha^2 L_\cB^2)$ \\ 
       $\lip{L_\cA}$& $\cM$ & $\sm{\mu_\cC} \cap \coco{\beta_\cC}$ & $4 (1 + 2 \alpha^2 L_\cA^2) /( 1 + \alpha^2 L_\cA^2)$ \\ 
       $\cM$ & $\lip{L_\cB}$ & $\sm{\mu_\cC} \cap \coco{\beta_\cC}$ & $(1 + \alpha L_\cB)^2 /( 1 + \alpha^2 L_\cB^2)$\\ 
    \bottomrule
       \end{tabular}
   \label{tab:comparison-taylor-1d-v2}
        \end{table}

    \subsubsection{Contraction factors of FBS}
    As DYS generalizes various two-operator splittings such as FBS and DRS, it is natural to ask what contraction rates are induced by our results in two-operator splitting setups and how strong they are compared to prior results.
    In particular, plugging $\opB = 0$ into $\opT_{\opA, \opB, \opC, \alpha, \lambda}$ gives $\opT_{\opA, 0, \opC, \alpha, \lambda} = (1 - \lambda) \opI + \lambda \resa{\opA} (\opI - \alpha \opC)$, an averaged version of the FBS operator.
    We consider a case where $\opA$ is $\mu_\cA$-strongly monotone and $\opC$ is $\beta_\cC$-cocoercive, which is also studied by Guo \cite{guo2021onthelinear}.
    We first establish the contraction factor of FBS implied by Theorem~\ref{thm:lipschitz-dys-a-lip-b-sm} and compare it with the rate provided in \cite{guo2021onthelinear}.
    
    Recall that the second instance of Theorem~\ref{thm:lipschitz-dys-a-lip-b-sm}; since $\opB = 0$ is $L_\cB$-Lipschitz for any $L_\cB > 0$, the contraction factor stated in Theorem~\ref{thm:lipschitz-dys-a-lip-b-sm} is valid for any $L_\cB > 0$ in this case. 
    Therefore, we can guarantee that $\opT_{\opA, 0, \opC, \alpha, \lambda}$ has a contraction factor smaller than or equal to the infimum of those factors. 
    Therefore, $\opT_{\opA, 0, \opC, \alpha, \lambda}$ is $\rho_{\mathrm{ours}}$-contractive where
    \begin{align*}
        \rho_{\mathrm{ours}} = \sqrt{1 - \frac{2 \alpha \mu_{\cA}\lambda(2-\lambda)}{2-\lambda+2\alpha \mu_{\cA}}}.
    \end{align*}
    The prior rate provided in Theorem~3.1 of \cite{guo2021onthelinear} is translated 
    \[
        \rho_{\mathrm{prev}} = \begin{cases}
            1 - \lambda \left( 1 - \frac{2\beta_\cC - \alpha + \alpha^2 \mu_\cA}{2 \alpha \beta_\cC \mu_\cA + 2\beta_\cC - \alpha} \right) 
            & \text{if } 0 < \lambda \leq \frac{8 \alpha \beta_\cC^2 \mu_\cA + 4\beta_\cC(2\beta_\cC - \alpha)}{8 \alpha \beta_\cC^2 \mu_\cA + (2\beta_\cC + \alpha)(2\beta_\cC - \alpha)}, \\
            \left( 1 + \frac{\alpha}{2\beta_\cC} \right) \lambda - 1 & \text{if } \frac{8 \alpha \beta_\cC^2 \mu_\cA + 4\beta_\cC(2\beta_\cC - \alpha)}{8 \alpha \beta_\cC^2 \mu_\cA + (2\beta_\cC + \alpha)(2\beta_\cC - \alpha)} < \lambda < 2.
        \end{cases}
    \]
    under $0 < \alpha < \min(2 \beta_\cC, 2\beta_\cC (2 - \lambda) / \lambda)$.
    \begin{figure}
            \centering
            \includegraphics[width=0.9\textwidth]{./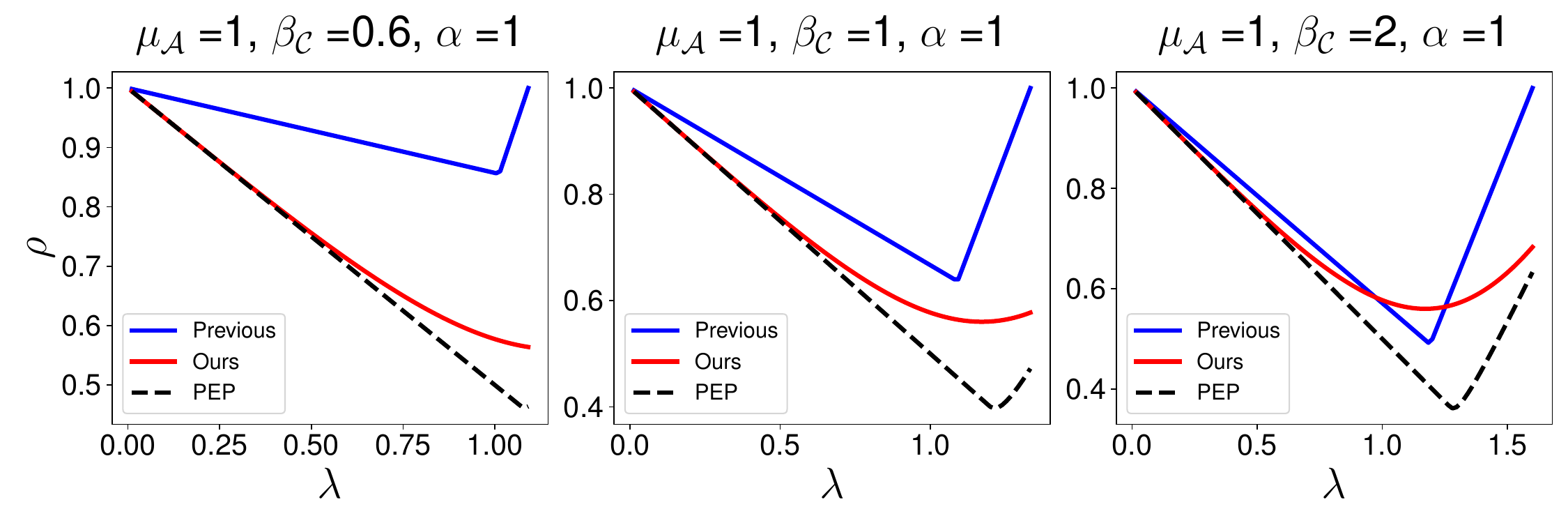}
            \includegraphics[width=0.9\textwidth]{./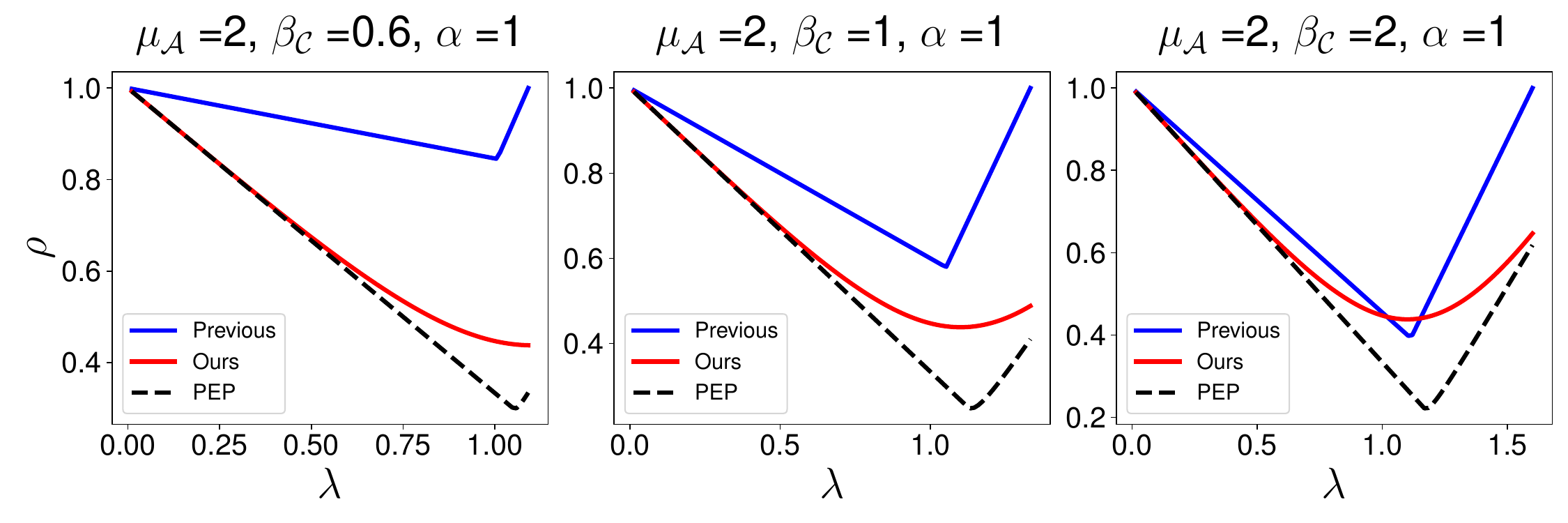}
            \caption{FBS contraction factors obtained by the previous work \cite{guo2021onthelinear} (blue), our work (red), and the tight factor calculated via PEP \cite{ryu_operator_2020} (black). 
            With six different choices of $\mu_\cA$, $\beta_\cC$, and $\alpha$, we evaluated each contraction factor over the range of $\lambda$ where Theorem~3.1 of \cite{guo2021onthelinear} is valid.}
        \label{fig:comparison-to-previous-rates-fbs}
    \end{figure}
    As shown in Figure~\ref{fig:comparison-to-previous-rates-fbs}, our rates are generally closer to the tight rates than the prior rates are, except for narrow ranges of $\lambda$.

    \section{Averagedness of DYS}
	\label{s:dys_avg}
	In this section, we present a novel averagedness result of the DYS operator that does not require cocoercivity of $\opC$.

	
	\begin{theorem}\label{thm:dys-avg}
		Let $\cH$ be a real Hilbert space. Let $\opA \colon \cH \rightrightarrows \cH$, $\opB \colon \cH \rightrightarrows \cH$, and $\opC \colon \cH \rightrightarrows \cH$ be monotone operators.
		If $\opA$ is $\mu_\cA$-strongly monotone and $\opC$ is $L_\cC$-Lipschitz
		(so $\opC$ need not be cocoercive) with $\mu_\cA$, $L_\cC \in (0, \infty)$, and $\alpha \in ( 0, 2\mu_\cA/L_\cC^2 )$,
		then $\opT_{\opA, \opB, \opC, \alpha, 1}$ (so $\lambda=1$) is $\frac{2}{4 - \alpha L_\cC^2 / \mu_\cA}$-averaged.
		Symmetrically, if $\opB$ is  $\mu_\cB$-strongly monotone and $\opC$ is $L_\cC$-Lipschitz with $\mu_\cB, L_\cC\in (0,\infty)$, and $\alpha \in ( 0, 2\mu_\cB/L_\cC^2 )$, then $\opT_{\opA, \opB, \opC, \alpha, 1}$ is $\frac{2}{4 - \alpha L_\cC^2 / \mu_\cB}$-averaged.
	\end{theorem}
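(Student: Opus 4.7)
My plan is to apply Theorem~\ref{thm:tight-dys-main} with $s = 1-\theta$, $\lambda = 1$ in the spirit of Corollary~\ref{cor:tight-coeff-dys-completion}. Writing $\xi = \alpha L_\cC^2/(2\mu_\cA) = 2 - \theta^{-1}$ (which lies in $(0,1)$ by $\alpha < 2\mu_\cA/L_\cC^2$), I note that the natural class $\cC = \cM \cap \lip{L_\cC}$ does not make $(2-\theta^{-1})\opI - \alpha\cC$ satisfy an arc property (its SRG is the left half of a disk centered at $\xi$), so I enlarge $\cC$ to
\[
\cC' = \alpha^{-1}(\xi\opI + \lip{R}), \qquad R = \sqrt{\xi^2 + \alpha^2 L_\cC^2}.
\]
Then $(2-\theta^{-1})\opI - \alpha\cC' = -\lip{R}$ has SRG $\Di(0,R)$, which has an arc property; and the inclusion $\cC \subseteq \cC'$ holds because the SRG of $\alpha\opC - \xi\opI$ -- a shifted right half-disk -- has its farthest point from the origin at exactly distance $\sqrt{\xi^2 + \alpha^2 L_\cC^2} = R$ by direct computation.

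By Theorem~\ref{thm:tight-dys-main} with $\cC'$ and Lemma~\ref{lem:max-mod}, the task reduces to showing $|\zeta_{\text{DYS}}(z_A,z_B,z_{C'};\alpha,1) - (1-\theta)| \le \theta$ on the boundaries, where $z_A \in \Ci(c,c)$ with $c = 1/(2(1+\alpha\mu_\cA))$, $z_B \in \Ci(1/2,1/2)$, and $|\xi - \alpha z_{C'}| = R$. Using the identity
\[
\zeta_{\text{DYS}} - (1-\theta) = \theta^{-1}(z_A - \theta)(z_B - \theta) + (\xi - \alpha z_{C'})z_A z_B,
\]
the triangle inequality gives $|\zeta_{\text{DYS}} - (1-\theta)| \le \theta^{-1}|z_A - \theta||z_B - \theta| + R|z_A||z_B|$. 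I then apply Lemma~\ref{lem:CS} with weights $r = \theta$ and $s = R\theta/\xi$, obtaining a product bound $(A_1+A_2)(B_1+B_2)$ where $A_1 + A_2 = |z_A-\theta|^2 + (R^2\theta/\xi)|z_A|^2$ and $B_1 + B_2 = \theta^{-2}|z_B-\theta|^2 + (\xi/\theta)|z_B|^2$. Using $|z_B|^2 = \Re z_B$ on $\Ci(1/2,1/2)$ together with $2\theta - 1 = \xi\theta$ makes the $z_B$-factor collapse to $1$ identically; using $|z_A|^2 = 2c\,\Re z_A$ on $\Ci(c,c)$ together with the crucial algebraic identity $R^2/\xi = \xi + 2\alpha\mu_\cA$ (immediate from $R^2 = \xi^2 + \alpha^2 L_\cC^2$ and $\alpha L_\cC^2 = 2\mu_\cA\xi$) makes the $z_A$-factor collapse to $\theta^2$ identically, since $2c(1 + R^2\theta/\xi) = [1 + (\xi+2\alpha\mu_\cA)\theta]/(1+\alpha\mu_\cA)$ reduces to $2\theta$ via the tautology $(2-\xi)\theta = 1$.

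The resulting bound $|\zeta_{\text{DYS}} - (1-\theta)| \le \theta$, combined with the single-valuedness of $\opT$ (resolvents of monotone operators and the Lipschitz $\opC$ are all single-valued), yields $\cG(\opT) \subseteq \Di(1-\theta,\theta) = \cG(\cN_\theta)$, so SRG-fullness of $\cN_\theta$ (Fact~\ref{fact:srg-full}) gives $\opT \in \cN_\theta$. The symmetric statement follows by the symmetry of $\zeta_{\text{DYS}}$ in $z_A,z_B$. The main obstacle is guessing the enlargement radius $R$ and the Cauchy-Schwarz weights that make both factors collapse simultaneously; once the simple identity $R^2/\xi = \xi + 2\alpha\mu_\cA$ is recognized, every cancellation in the computation reduces to the defining relation $(2-\xi)\theta = 1$.
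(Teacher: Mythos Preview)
Your proposal is correct and follows essentially the same route as the paper: the same enlargement $\cC' = \alpha^{-1}((2-\theta^{-1})\opI + \lip{R})$ with $R = \sqrt{(2-\theta^{-1})^2 + \alpha^2 L_\cC^2}$, the same application of Theorem~\ref{thm:tight-dys-main} with $s = 1-\theta$, the same factorization of $\zeta_{\text{DYS}} - (1-\theta)$, and the same triangle-inequality plus Cauchy--Schwarz step that makes both factors collapse on the boundary circles. The only differences are cosmetic---you introduce the shorthand $\xi = 2-\theta^{-1}$ and distribute the Cauchy--Schwarz weights so that the two factors become $\theta^2$ and $1$ instead of the paper's $\theta$ and $\theta$---and you are slightly more explicit about verifying $\cC \subseteq \cC'$ and the single-valuedness of $\opT$.
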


	\begin{proof}
		Consider the first statement, where $\opA$ is $\mu_\cA$-strongly monotone and $\opC$ is $L_\cC$-Lipschitz. Write $\cA = \sm{\mu_\cA}$, $\cB = \cM$, and $\cC = \cM \cap \lip{L_\cC}$.
		Let $\theta = \frac{2}{4 - \alpha L_\cC^2 / \mu_{\cA}}$.
		Then the statement is equivalent to saying that $\cG(\opT_{\cA, \cB, \cC, \alpha, 1})\subseteq\cG(\cN_\theta)$.

		We now follow a similar argument in the proof of Theorem~\ref{thm:lipschitz-dys-a-lip-c-sm}. 
            Define
            \[
                O = \frac{2 - \theta^{-1}}{\alpha}, \quad O_L = 0,\quad \{ N, N' \} = \partial \srg{\cM} \cap \partial \srg{\lip{L_\cC}},
            \]
            and
		\[
			\cC' = \alpha^{-1}((2 - \theta^{-1})\opI + \cL_R),
		\]
		to have $(2 - \theta^{-1})\opI - \alpha \cC' = \cL_R$ satisfies an arc property and set $R$ to the minimum value that renders $\cC \subset \cC'$.
        Note that $\srg{\cC'}$ is centered at $O$, which is on the right side of $O_L$ since $2 - \theta^{-1} > 0$.
        Therefore, we have $R = \alpha \overline{ON} = \sqrt{(2 - \theta^{-1})^2 + \alpha^2 L_{\cC}^2}$ as depicted in the following figure.
        \begin{center}
		\def\scale{1.6}
		\begin{tikzpicture}[scale=\scale]
			\def\a{0.5};
			\def\L{1};
			\def\m{0.7};
			\def\t{305};
			\def\k{320};
			\def\e{1.6};
                \def\lam{1};
                \def\radone{(\L^2  / (2  * \lam * \m))};

			\coordinate (Z) at (0, 0);
			\coordinate (O) at (0, 0);
			\coordinate (O') at ({\L^2  / (2  * \lam * \m)}, {0});
			\coordinate (C) at (0, {\L});
			\coordinate (C') at (0, {-\L});
			
			\begin{scope}
			    \fill[fill=lightgrey] (O') circle ({sqrt(\radone^2 + \L^2)});
			\end{scope}
			
			\begin{scope}
			    \clip (C') rectangle (\L, \L);
			    \fill[fill=medgrey] (O) circle ({\L});
			\end{scope}
		  
		  
		    \pic[draw, dashed, angle radius={\cminpt*\scale*\L}] {angle=C--O--C'};

			\filldraw (O) circle[radius={\ptsize / \scale pt}];
			\filldraw (O') circle[radius={\ptsize / \scale pt}];
 			\filldraw (C) circle[radius={\ptsize / \scale pt}];
 			\filldraw (C') circle[radius={\ptsize / \scale pt}];

 			\draw (O) node[below left] {$O_L$};
 			\draw (O') node[below =0pt and -5pt] {$O$};
 			\draw (C) node[above left=0pt and 0pt] {$N$};
 			\draw (C') node[below left=0pt and 0pt] {$N'$};
			
			\draw ({\L*cos(\t)},{\L*sin(\t)}) node[above left] {$\srg{\cC}$};
			\draw ({\L^2/(2*\m) + sqrt(\L^2+\L^4/\m^2/4)*cos(\k)},{sqrt(\L^2+\L^4/\m^2/4)*sin(\k)}) node[below right] {$\srg{\cC'}$};

			\draw [<->] (-1.2,0) -- (2.2,0);
			\draw [<->] (0,-1.4) -- (0,1.4);
		\end{tikzpicture}	
    	\end{center}
 
		Continuing on,
		\begin{align*}
		   	\abs{\srg{\opT_{\cA, \cB, \cC, \alpha, 1}} - (1 - \theta)} 
			&\subseteq
			\abs{\srg{\opT_{\cA, \cB, \cC', \alpha, 1}} - (1 - \theta)} \\
			&=
			\abs{\cZ^{\text{DYS}}_{\cA, \cB, \cC', \alpha, 1} - (1 - \theta)},
		\end{align*}
		where the equality is a consequence of Theorem~\ref{thm:tight-dys-main} with $\lambda = 1$ and $s = 1 - \theta$.
		Now, it suffices to show 
			$\sup_{z \in \cZ^{\text{DYS}}_{\cA, \cB, \cC', \alpha, 1}} \abs{z - (1 - \theta)} \le \theta$,
		or equivalently, by Lemma~\ref{lem:max-mod},
		\[
		    \abs{\zeta_{\text{DYS}}(z_A, z_B, z_{C'}; \alpha, 1)-(1-\theta)} \le \theta
		\]
		for any $z_A \in \partial\srg{\resa{\cA}}$, $z_B \in \partial\srg{\resa{\cB}},$ and $z_{C'} \in \partial\srg{\cC'}$.
		
        With a similar argument as that for Theorem~\ref{thm:lipschitz-dys-a-lip-sm}, we have
		\begin{align*}
			&\abs{\zeta_{\text{DYS}}(z_A, z_B, z_{C'}; \alpha, 1)-(1-\theta)}^2\\
			&=\abs{\theta^{-1}\para{z_A-\theta}\para{z_B-\theta} +(2-\theta^{-1} -\alpha z_{C'}) z_Az_B }^2\\
			&\le \para{\theta^{-1}\abs{z_A-\theta}\abs{z_B-\theta}+\abs{2-\theta^{-1}-\alpha z_{C'}}\abs{z_A}\abs{z_B}}^2
                \\
			\qquad\qquad\quad &= \bigg(\sqrt{\theta^{-1} \abs{z_A-\theta}^2 \cdot \theta^{-1} \abs{z_B-\theta}^2} \\
                &\quad\quad + \sqrt{\abs{2-\theta^{-1}-\alpha z_{C'}}^2\para{2-\theta^{-1}}^{-1}\abs{z_A}^2 \cdot \para{2-\theta^{-1}}\abs{z_B}^2} \bigg)^2\\
			&\le \para{\theta^{-1} \abs{z_A-\theta}^2+\abs{2-\theta^{-1}-\alpha z_{C'}}^2\para{2-\theta^{-1}}^{-1}\abs{z_A}^2}\\
			&\quad \cdot \para{\theta^{-1} \abs{z_B-\theta}^2+\para{2-\theta^{-1}}\abs{z_B}^2},
		\end{align*}
		where the first inequality follows from the triangle inequality and the second is from Lemma~\ref{lem:CS}.
		
		The second factor simplifies to
        \[
		    \theta^{-1} \abs{z_B-\theta}^2+\para{2-\theta^{-1}}\abs{z_B}^2
		    = 2\left| z_B - \frac{1}{2} \right|^2 + \theta - \frac{1}{2} = \theta
		\]
		since $z_B \in \partial\srg{\resa{\cB}} = \Ci\left(\frac{1}{2}, \frac{1}{2}\right)$.
		
		The first factor simplifies to
		\begin{align*}
		    &\theta^{-1} \abs{z_A-\theta}^2+\abs{2-\theta^{-1}-\alpha z_{C'}}^2\para{2-\theta^{-1}}^{-1}\abs{z_A}^2\\
		    &\stackrel{\text{(i)}}{=} \theta^{-1} \abs{z_A-\theta}^2+R^2(2 - \theta^{-1})^{-1}\abs{z_A}^2 \\
		    &= \left(\theta^{-1} + R^2(2 - \theta^{-1})^{-1}\right)\left| z_A - \frac{1}{\theta^{-1} + R^2(2 - \theta^{-1})^{-1}} \right|^2 + \theta - \frac{1}{\theta^{-1} + R^2(2 - \theta^{-1})^{-1}}
		    \\
		    &\stackrel{\text{(ii)}}{=} 2(1 + \alpha\mu_\cA)\left| z_A - \frac{1}{2(1 + \alpha\mu_\cA)} \right|^2 + \theta - \frac{1}{2(1 + \alpha\mu_\cA)}\\
		    &\stackrel{\text{(iii)}}{=}\theta,
		\end{align*}
        where we used $z_{C'} \in \partial\srg{\cC'} = \Ci\left( \frac{2 - \theta^{-1}}{\alpha}, \frac{R}{\alpha} \right)$ in (i), $\theta^{-1} + R^2(2 - \theta^{-1})^{-1} = 2(1 + \alpha\mu_\cA)$ in (ii), and 
		$z_A \in \partial\srg{\resa{\cA}} = \Ci\left( \frac{1}{2(1 + \alpha\mu_\cA)}, \frac{1}{2(1 + \alpha\mu_\cA)} \right)$ in (iii).
	    Thus, we conclude 
	    \[
	        \abs{\zeta_{\text{DYS}}(z_A, z_B, z_{C'}; \alpha, 1)-(1-\theta)}^2 \le \theta^2.
	    \]
	    
	 	Since $\zeta_{\text{DYS}}(z_A, z_B, z_{C'}; \alpha, \lambda)$ is symmetric with respect to $z_A$ and $z_B$, the second statement follows from the same argument.
	\end{proof}

    The averagedness coefficient of Theorem~\ref{thm:dys-avg} is, in fact, tight in the sense that it cannot be reduced (improved) without further assumptions. This can be proved by showing that there is a $z\in \cZ^{\text{DYS}}_{\cA, \cB, \cC, \alpha, 1}$ such that $z\ne 1$ and $z\in \partial \cG(\cN_\theta)$. However, we omit this proof for the sake of conciseness.

	\section{Conclusion}
In this work, we extended the SRG theory to accommodate the DYS operator and used this theory to analyze the convergence of DYS  iterations. In our view, the SRG theory of Section~\ref{s:srg_theory} opens the door to many avenues of future work on splitting methods, and the analyses of Section~\ref{s:dys_cont} serve as demonstrations of such possibilities. 
One possible future direction is to further refine the analysis on the complex plane and obtain tighter contraction factors, as the contraction factors $\rho$ presented in Section~\ref{s:dys_cont} are not tight in the sense that 
\[
    \rho > \sup_{\substack{\opT \in \opT_{\cA, \cB, \cC, \alpha, \lambda} \\ x, y \in \dom\opT,\, x \ne y}}
    \frac{\norm{\opT x - \opT y}}{\norm{x - y}}.
\]
Another direction is to characterize optimal or near-optimal choices of parameters $\alpha$ and $\lambda$ given a set of assumptions on the operators.

\bibliographystyle{siamplain}
\bibliography{DYS}

\begin{thebibliography}{10}

\bibitem{Aragon-ArtachoTorregrosa-Belen2022_direct}
{\sc F.~J. Arag{\'o}n-Artacho and D.~Torregrosa-Bel{\'e}n}, {\em A direct proof
  of convergence of {D}avis--{Y}in splitting algorithm allowing larger
  stepsizes}, Set-Valued and Variational Analysis, 30 (2022), pp.~1--19.

\bibitem{bauschke_2017_convex}
{\sc H.~H. Bauschke and P.~L. Combettes}, {\em Convex Analysis and Monotone
  Operator Theory in Hilbert Spaces}, Springer, 2nd~ed., 2017.

\bibitem{beck2009fast}
{\sc A.~Beck and M.~Teboulle}, {\em A fast iterative shrinkage-thresholding
  algorithm for linear inverse problems}, SIAM journal on imaging sciences, 2
  (2009), pp.~183--202.

\bibitem{bredies2022degenerate}
{\sc K.~Bredies, E.~Chenchene, D.~A. Lorenz, and E.~Naldi}, {\em Degenerate
  preconditioned proximal point algorithms}, SIAM Journal on Optimization, 32
  (2022), pp.~2376--2401.

\bibitem{briceno2015forward}
{\sc L.~M. Briceno-Arias}, {\em Forward-{D}ouglas--{R}achford splitting and
  forward-partial inverse method for solving monotone inclusions},
  Optimization, 64 (2015), pp.~1239--1261.

\bibitem{bruck1977weak}
{\sc R.~E. Bruck}, {\em On the weak convergence of an ergodic iteration for the
  solution of variational inequalities for monotone operators in {H}ilbert
  space}, Journal of Mathematical Analysis and Applications, 61 (1977),
  pp.~159--164.

\bibitem{carrillo2022primal}
{\sc J.~A. Carrillo, K.~Craig, L.~Wang, and C.~Wei}, {\em Primal dual methods
  for {W}asserstein gradient flows}, Foundations of Computational Mathematics,
  22 (2022), pp.~389--443.

\bibitem{cevher2018stochastic}
{\sc V.~Cevher, B.~C. V{\~u}, and A.~Yurtsever}, {\em Stochastic forward
  {D}ouglas--{R}achford splitting method for monotone inclusions}, in
  Large-Scale and Distributed Optimization, A.~R. Pontus~Giselsson, ed.,
  Springer, 2018, pp.~149--179.

\bibitem{chaffey_rolled-off_2021}
{\sc T.~Chaffey}, {\em A rolled-off passivity theorem}, Systems \& Control
  Letters, 162 (2022), p.~105198.

\bibitem{chaffey_scaled_2021}
{\sc T.~Chaffey, F.~Forni, and R.~Sepulchre}, {\em Scaled relative graphs for
  system analysis}, IEEE Conference on Decision and Control,  (2021).

\bibitem{chaffey_graphical_2021}
{\sc T.~Chaffey, F.~Forni, and R.~Sepulchre}, {\em Graphical nonlinear system
  analysis}, IEEE Transactions on Automatic Control,  (2023).

\bibitem{chaffey2022circuit}
{\sc T.~Chaffey and A.~Padoan}, {\em Circuit model reduction with scaled
  relative graphs}, IEEE Conference on Decision and Control,  (2022).

\bibitem{chaffey_monotone_2021}
{\sc T.~Chaffey and R.~Sepulchre}, {\em Monotone one-port circuits}, IEEE
  Transactions on Automatic Control,  (2023).

\bibitem{condat2022randprox}
{\sc L.~Condat and P.~Richt{\'a}rik}, {\em Randprox: Primal-dual optimization
  algorithms with randomized proximal updates}, International Conference on
  Learning Representations,  (2023).

\bibitem{Cui2019AnIT}
{\sc F.~Cui, Y.~Tang, and Y.~Yang}, {\em An inertial three-operator splitting
  algorithm with applications to image inpainting}, Applied Set-Valued Analysis
  and Optimization, 1 (2019), pp.~113--134.

\bibitem{DaoPhan2021_adaptive}
{\sc M.~N. Dao and H.~M. Phan}, {\em An adaptive splitting algorithm for the
  sum of two generalized monotone operators and one cocoercive operator}, Fixed
  Point Theory and Algorithms for Sciences and Engineering, 2021 (2021),
  pp.~1--19.

\bibitem{davis2015three}
{\sc D.~Davis and W.~Yin}, {\em A three-operator splitting scheme and its
  optimization applications}, arXiv preprint arXiv:1504.01032,  (2015).

\bibitem{davis_three-operator_2017}
{\sc D.~Davis and W.~Yin}, {\em A three-operator splitting scheme and its
  optimization applications}, Set-Valued and Variational Analysis, 25 (2017),
  pp.~829--858.

\bibitem{douglas1956numerical}
{\sc J.~Douglas and H.~H. Rachford}, {\em On the numerical solution of heat
  conduction problems in two and three space variables}, Transactions of the
  American Mathematical Society, 82 (1956), pp.~421--439.

\bibitem{drori2014performance}
{\sc Y.~Drori and M.~Teboulle}, {\em Performance of first-order methods for
  smooth convex minimization: a novel approach}, Mathematical Programming, 145
  (2014), pp.~451--482.

\bibitem{gabay1976dual}
{\sc D.~Gabay and B.~Mercier}, {\em A dual algorithm for the solution of
  nonlinear variational problems via finite element approximation}, Computers
  \& Mathematics with Applications, 2 (1976), pp.~17--40.

\bibitem{Glowinski1975}
{\sc R.~Glowinski and A.~Marroco}, {\em Sur l'approximation, par d\'eld\'ements
  finis d'ordre un, et la r\'esolution, par p\'enalisation-dualitd\'e d'une
  classe de probl\'emes de dirichlet non lind\'eaires}, ESAIM: Mathematical
  Modelling and Numerical Analysis - Mod\'elisation Math\'ematique et Analyse
  Num\'erique, 9 (1975), pp.~41--76.

\bibitem{guo2021onthelinear}
{\sc K.~Guo}, {\em On the linear convergence rate of a relaxed forward-backward
  splitting method}, Optimization, 70 (2021), pp.~1161--1170.

\bibitem{heaton2021learn}
{\sc H.~Heaton, D.~McKenzie, Q.~Li, S.~W. Fung, S.~Osher, and W.~Yin}, {\em
  Learn to predict equilibria via fixed point networks}, arXiv preprint
  arXiv:2106.00906,  (2021).

\bibitem{huang_scaled_2019}
{\sc X.~Huang, E.~K. Ryu, and W.~Yin}, {\em Scaled relative graph of normal
  matrices}, arXiv preprint arXiv:2001.02061,  (2019).

\bibitem{huang_tight_2020}
{\sc X.~Huang, E.~K. Ryu, and W.~Yin}, {\em Tight coefficients of averaged
  operators via scaled relative graph}, Journal of Mathematical Analysis and
  Applications, 490 (2020), p.~124211.

\bibitem{lee2022convergence}
{\sc J.~Lee, S.~Yi, and E.~K. Ryu}, {\em Convergence analyses of {D}avis--{Y}in
  splitting via scaled relative graphs}, arXiv preprint arXiv:2207.04015,
  (2022).

\bibitem{lions1979splitting}
{\sc P.-L. Lions and B.~Mercier}, {\em Splitting algorithms for the sum of two
  nonlinear operators}, SIAM Journal on Numerical Analysis, 16 (1979),
  pp.~964--979.

\bibitem{ogura_non-strictly_2002}
{\sc N.~Ogura and I.~Yamada}, {\em Non-strictly convex minimization over the
  fixed point set of an asymptotically shrinking nonexpansive mapping},
  Numerical Functional Analysis and Optimization, 23 (2002), pp.~113--137.

\bibitem{passty1979ergodic}
{\sc G.~B. Passty}, {\em Ergodic convergence to a zero of the sum of monotone
  operators in {H}ilbert space}, Journal of Mathematical Analysis and
  Applications, 72 (1979), pp.~383--390.

\bibitem{pates_scaled_2021}
{\sc R.~Pates}, {\em The scaled relative graph of a linear operator}, arXiv
  preprint arXiv:2106.05650,  (2021).

\bibitem{peaceman1955numerical}
{\sc D.~W. Peaceman and H.~H. Rachford, Jr}, {\em The numerical solution of
  parabolic and elliptic differential equations}, Journal of the Society for
  Industrial and Applied Mathematics, 3 (1955), pp.~28--41.

\bibitem{pedregosa_convergence_2016}
{\sc F.~Pedregosa}, {\em On the convergence rate of the three operator
  splitting scheme}, arXiv preprint arXiv:1610.07830,  (2016).

\bibitem{pedregosa2019proximal}
{\sc F.~Pedregosa, K.~Fatras, and M.~Casotto}, {\em Proximal splitting meets
  variance reduction}, International Conference on Artificial Intelligence and
  Statistics,  (2019).

\bibitem{pedregosa_adaptive_2018}
{\sc F.~Pedregosa and G.~Gidel}, {\em Adaptive three operator splitting},
  International Conference on Machine Learning,  (2018).

\bibitem{raguet2013generalized}
{\sc H.~Raguet, J.~Fadili, and G.~Peyr{\'e}}, {\em A generalized
  forward-backward splitting}, SIAM Journal on Imaging Sciences, 6 (2013),
  pp.~1199--1226.

\bibitem{ryu_primer_2016}
{\sc E.~K. Ryu and S.~Boyd}, {\em Primer on monotone operator methods}, Applied
  and Computational Mathematics, 15 (2016), pp.~3--43.

\bibitem{ryu_scaled_2021}
{\sc E.~K. Ryu, R.~Hannah, and W.~Yin}, {\em Scaled relative graphs:
  {Nonexpansive} operators via {2D} {Euclidean} geometry}, Mathematical
  Programming, 194 (2022), pp.~569--619.

\bibitem{ryu_operator_2020}
{\sc E.~K. Ryu, A.~B. Taylor, C.~Bergeling, and P.~Giselsson}, {\em Operator
  splitting performance estimation: Tight contraction factors and optimal
  parameter selection}, SIAM Journal on Optimization, 30 (2020),
  pp.~2251--2271.

\bibitem{ryu2021large}
{\sc E.~K. Ryu and W.~Yin}, {\em Large-Scale Convex Optimization: Algorithms \&
  Analyses via Monotone Operators}, Cambridge University Press, 2022.

\bibitem{salim2022dualize}
{\sc A.~Salim, L.~Condat, K.~Mishchenko, and P.~Richt{\'a}rik}, {\em Dualize,
  split, randomize: Toward fast nonsmooth optimization algorithms}, Journal of
  Optimization Theory and Applications, 195 (2022), pp.~102--130.

\bibitem{SteinShakarchi2010_complex}
{\sc E.~M. Stein and R.~Shakarchi}, {\em Complex analysis}, vol.~2, Princeton
  University Press, 2010.

\bibitem{taylor2017smooth}
{\sc A.~B. Taylor, J.~M. Hendrickx, and F.~Glineur}, {\em Smooth strongly
  convex interpolation and exact worst-case performance of first-order
  methods}, Mathematical Programming, 161 (2017), pp.~307--345.

\bibitem{van2020three}
{\sc D.~Van~Hieu, L.~Van~Vy, and P.~K. Quy}, {\em Three-operator splitting
  algorithm for a class of variational inclusion problems}, Bulletin of the
  Iranian Mathematical Society, 46 (2020), pp.~1055--1071.

\bibitem{wang_robust_2019}
{\sc H.~Wang, M.~Fazlyab, S.~Chen, and V.~M. Preciado}, {\em Robust convergence
  analysis of three-operator splitting}, {Allerton} {Conference} on
  {Communication}, {Control}, and {Computing},  (2019).

\bibitem{wang2017three}
{\sc Y.~Wang, H.~Zhou, S.~Zu, W.~Mao, and Y.~Chen}, {\em Three-operator
  proximal splitting scheme for 3-d seismic data reconstruction}, IEEE
  Geoscience and Remote Sensing Letters, 14 (2017), pp.~1830--1834.

\bibitem{weylandt2019splitting}
{\sc M.~Weylandt}, {\em Splitting methods for convex bi-clustering and
  co-clustering}, 2019 IEEE Data Science Workshop,  (2019), pp.~237--242.

\bibitem{yan2018new}
{\sc M.~Yan}, {\em A new primal-dual algorithm for minimizing the sum of three
  functions with a linear operator}, Journal of Scientific Computing, 76
  (2018), pp.~1698--1717.

\bibitem{yurtsever_three_2021}
{\sc A.~Yurtsever, A.~Gu, and S.~Sra}, {\em Three operator splitting with
  subgradients, stochastic gradients, and adaptive learning rates}, Neural
  Information Processing Systems,  (2021).

\bibitem{yurtsever_three_2021-1}
{\sc A.~Yurtsever, V.~Mangalick, and S.~Sra}, {\em Three operator splitting
  with a nonconvex loss function}, International Conference on Machine
  Learning,  (2021).

\bibitem{yurtsever2016stochastic}
{\sc A.~Yurtsever, B.~C. {V\~u}, and V.~Cevher}, {\em Stochastic
  three-composite convex minimization}, Neural Information Processing Systems,
  (2016).

\bibitem{zong2018convergence}
{\sc C.~Zong, Y.~Tang, and Y.~J. Cho}, {\em Convergence analysis of an inexact
  three-operator splitting algorithm}, Symmetry, 10 (2018), p.~563.

\end{thebibliography}
\end{document}


\maketitle

\section{A detailed example}

Here we include some equations and theorem-like environments to show
how these are labeled in a supplement and can be referenced from the
main text.
Consider the following equation:
\begin{equation}
  \label{eq:suppa}
  a^2 + b^2 = c^2.
\end{equation}
You can also reference equations such as \cref{eq:matrices,eq:bb} 
from the main article in this supplement.

\lipsum[100-101]

\begin{theorem}
An example theorem.
\end{theorem}

\lipsum[102]
 
\begin{lemma}
An example lemma.
\end{lemma}

\lipsum[103-105]

Here is an example citation: \cite{KoMa14}.

\section[Proof of Thm]{Proof of \cref{thm:bigthm}}
\label{sec:proof}

\lipsum[106-112]

\section{Additional experimental results}
\Cref{tab:smfoo} shows additional
supporting evidence. 

\begin{table}[htbp]
\footnotesize
  \caption{Example table.}\label{tab:smfoo}
\begin{center}
  \begin{tabular}{|c|c|c|} \hline
   Species & \bf Mean & \bf Std.~Dev. \\ \hline
    1 & 3.4 & 1.2 \\
    2 & 5.4 & 0.6 \\ \hline
  \end{tabular}
\end{center}
\end{table}

\bibliographystyle{siamplain}
\bibliography{references}